\newtheorem{thm}{Theorem}[section]
\newtheorem{lem}[thm]{Lemma}
\newtheorem{prop}[thm]{Proposition}
\newtheorem{cor}[thm]{Corollary}
\theoremstyle{definition}
\newtheorem{de}[thm]{Definition}
\theoremstyle{remark}
\newtheorem{rem}[thm]{Remark}
\numberwithin{equation}{section}
\def \N {\mathbb N}
\def \Z {\mathbb Z}
\def \F {\mathcal F}
\def \G {\mathcal{G}}
\def \U {\mathcal U}
\def \X {\mathcal{X}}
\def \O {\mathcal{O}}
\def \t {\mathcal{T}}
\def \Q {{\bf Q}}
\def \RP {{\bf RP}}
\def \M {{\bf M}}
\def \int {{\rm int}}
\def \cl {{\rm cl}}
\def \id {{\rm id}}
\def \a {\alpha }
\def \b {\beta}
\def \ep {\epsilon}
\def \d {\delta}
\def \D {\Delta}
\def \c {\circ}
\def\w {\omega}
\def \ov {\overline}
\def \lra{\longrightarrow}
\begin{document}
\title{Regionally proximal relation of order $d$ is an equivalence one for minimal systems
and a combinatorial consequence}

\author{Song Shao}
\author{Xiangdong Ye}

\address{Department of Mathematics, University of Science and Technology of China,
Hefei, Anhui, 230026, P.R. China.}

\email{songshao@ustc.edu.cn} \email{yexd@ustc.edu.cn}

\subjclass[2000]{Primary: 37B05; 37A99} \keywords{Nilsystems;
regionally proximal relation; minimal systems}

\thanks{S. Shao is supported by NNSF of
China (10871186), and X.D. Ye is supported by NNSF of China
(10531010, 11071231) and 973 programm.}

\date{}

\begin{abstract}
By proving the minimality of face transformations acting on the
diagonal points and searching the points allowed in the minimal
sets, it is shown that the regionally proximal relation of order
$d$, $\RP^{[d]}$, is an equivalence relation for minimal systems.
Moreover, the lifting of $\RP^{[d]}$ between two minimal systems is
obtained, which implies that the factor induced by $\RP^{[d]}$ is
the maximal $d$-step nilfactor. The above results extend the same
conclusions proved by Host, Kra and Maass for minimal distal
systems.

A combinatorial consequence is that if $S$ is a dynamically syndetic
subset of $\Z$, then for each $d\ge 1$, $$\{(n_1,\ldots,n_d)\in
\Z^d: n_1\ep_1+\cdots +n_d\ep_d\in S, \ep_i\in \{0,1\}, 1\le i\le
d\}$$ is syndetic. In some sense this is the topological
correspondence of the result obtained by Host and Kra for positive
upper Banach density subsets using ergodic methods.
\end{abstract}

\maketitle

\markboth{Regionally proximal relation of order $d$}{S. Shao and
X.D. Ye}

\section{Introduction}

The background of our study can be seen both in ergodic theory and
topological dynamics.

\subsection{Background in ergodic theory}
The connection between ergodic theory and additive combinatorics was
built in the 1970's with Furstenberg's beautiful proof of
Szemer\'edi's theorem via ergodic theory \cite{F77}. Furstenberg's
proof paved the way for obtaining new combinatorial results using
ergodic methods, as well as leading to numerous developments within
ergodic theory. Roughly speaking, Furstenberg \cite{F77} proved
Szemer\'edi's theorem via the following ergodic theorem: let $T$ be
a measure-preserving transformation on the probability space
$(X,\mathcal{B},\mu)$, and let $A\in \mathcal{B}$ with positive
measure. Then for every integer $d \ge 1$,
\begin{equation*}
    \liminf_{N\to \infty} \frac{1}{N}\sum_{n=0}^{N-1}
    \mu(A\cap T^{-n}A\cap T^{-2n}A\cap \ldots \cap T^{-dn}A)>0.
\end{equation*}

So it is natural to ask about the convergence of these averages, or
more generally about the convergence in $L^2(X,\mu)$ of the multiple
ergodic averages $$ \frac 1 N\sum_{n=0}^{N-1}f_1(T^nx)\ldots
f_d(T^{dn}x) ,$$ where $f_1, \ldots , f_d \in L^\infty(X,\mu)$.
After nearly 30 years' efforts of many researchers, this problem was
finally solved in \cite{HK05, Z}.

In their proofs the notion of characteristic factors plays a great
role. Let us see why this notion is important. Loosely speaking, the
structure theorem of \cite{HK05, Z} states that if one wants to
understand the multiple ergodic averages
$$ \frac 1 N\sum_{n=0}^{N-1}f_1(T^nx)\ldots f_d(T^{dn}x) ,$$
one can replace each function $f_i$ by its conditional expectation
on some $d$-step nilsystem ($1$-step nilsystem is the Kroneker's
one). Thus one can reduce the problem to the study of the same
average in a nilsystem, i.e. reducing the average in an arbitrary
system to a more tractable question.
Note that the multiple ergodic average for commuting transformations
was obtained by Tao \cite{Tao} using finitary ergodic method, see
\cite{Austin,H} for more traditional ergodic proofs. Unfortunately,
in this more general setting, the characteristic factors are not
known up till now.

\medskip

In \cite{HK05}, some useful tools, such as dynamical
parallelepipeds, ergodic uniformity seminorms etc., were introduced
in the study of dynamical systems. Their further applications were
discussed in \cite{H, HK08, HK09, HKM, HM}. Now a natural and
important question is what the topological correspondence of
characteristic factors is. The history how to obtain the topological
counterpart of characteristic factors will be discussed in the next
subsection.

\subsection{Background in topological dynamics}
In some sense an equicontinuous system is the simplest system in
topological dynamics. In the study of topological dynamics, one of
the first problems was to characterize the equicontinuous structure
relation $S_{eq}(X)$ of a system $(X, T)$; i.e. to find the smallest
closed invariant equivalence relation $R(X)$ on $(X, T)$ such that
$(X/ R(X), T)$ is equicontinuous. A natural candidate for $R(X)$ is
the so-called regionally proximal relation $\RP(X)$ \cite{EG}. By
the definition, $\RP(X)$ is closed, invariant, and reflexive, but
not necessarily transitive. The problem was then to find conditions
under which $\RP(X)$ is an equivalence relation. It turns out to be
a difficult problem. Starting with Veech \cite{V68}, various
authors, including MacMahon \cite{Mc}, Ellis-Keynes \cite{EK}, came
up with various sufficient conditions for $\RP(X)$ to be an
equivalence relation. For somewhat different approach, see
\cite{AG}. Note that in our case, $T: X\rightarrow X$ being
homeomorphism and $(X,T)$ being minimal, $\RP(X)$ is always an
equivalence relation.

In \cite{HM} Host and Maass tried to find the topological
counterpart of characteristic factors and obtained their goal in
some particular case. Recently, Host, Kra and Maass \cite{HKM}
continued the study and succeeded for all minimal distal systems,
which can be viewed as an analog of the purely ergodic structure
theory of \cite{F77, HK05, Z}. Note that previously the counterpart
of characteristic factors in topological dynamics was studied by
Glasner \cite{G93,G94}, where he considered the characterization of
nil-system of order 2 in \cite{G93} and studied the characteristic
factors for the action $T\times T^2\times \ldots \times T^n$ in
\cite{G94}. To get the characteristic factors in topological
dynamics, in \cite{HKM, HM}, for distal minimal systems a certain
generalization of the regionally proximal relation is used to
produce the maximal nilfactors.

\medskip

Here is the notion of the regionally proximal relation of order $d$
defined in \cite{HM,HKM}.

\begin{de}
Let $(X, T)$ be a system and let $d\ge 1$ be an integer. A pair $(x,
y) \in X\times X$ is said to be {\em regionally proximal of order
$d$} if for any $\d  > 0$, there exist $x', y'\in X$ and a vector
${\bf n} = (n_1,\ldots , n_d)\in\Z^d$ such that $\rho(x, x') < \d,
\rho(y, y') <\d$, and $$ \rho(T^{{\bf n}\cdot \ep}x', T^{{\bf
n}\cdot \ep}y') < \d\ \text{for any $\ep\in \{0,1\}^d$,
$\ep\not=(0,\ldots,0)$},
$$ where ${\bf n}\cdot \ep = \sum_{i=1}^d \ep_in_i$. The set of
regionally proximal pairs of order $d$ is denoted by $\RP^{[d]}(X)$,
which is called {\em the regionally proximal relation of order $d$}.
\end{de}

It is easy to see that $\RP^{[d]}(X)$ is a closed and invariant
relation for all $d\in \N$. When $d=1$, $\RP^{[d]}(X)$ is nothing
but the classical regionally proximal relation. In \cite{HKM}, for
distal minimal systems the authors showed that $\RP^{[d]}(X)$ is a
closed invariant equivalence relation, and the quotient of $X$ under
this relation is its maximal $d$-step nilfactor. So it remains the
question open: is $\RP^{[d]}(X)$ an equivalence relation for any
minimal system? The purpose of the current paper is to settle down
the question.


\subsection{Main results}

In this article, we show that for each minimal system $\RP^{[d]}(X)$
is a closed invariant equivalence relation and the quotient of $X$
under this relation is its maximal $d$-step nilfactor.

Note that a subset $S$ of $\Z$ is {\it dynamically syndetic} if
there is a minimal system $(X,T)$, $x\in X$ and an open neighborhood
$U$ of $x$ such that $S=\{n\in \Z: T^nx\in U\}$. Equivalently,
$S\subset \Z$ is dynamically syndetic if and only if $S$ contains
$\{0\}$ and $1_S$ is a minimal point of $(\{0,1\}^\Z, \sigma )$,
where $\sigma$ is the shift map. A subset $S$ of $\Z^d$ is {\it
syndetic} if there exists a finite subset $F\subset \Z^d$ such that
$S+F=\Z^d$. A combinatorial consequence of our results is that if
$S$ is a dynamically syndetic subset of $\Z$, then for each $d\ge
1$,
$$\{{\bf n}=(n_1,\ldots,n_d)\in \Z^d: \ep\cdot {\bf n}=
n_1\ep_1+\cdots +n_d\ep_d\in S,
\ep_i\in \{0,1\}, 1\le i\le d\}$$ is syndetic. In some sense this is
the topological correspondence of the following result obtained by
Host and Kra for positive upper Banach density subsets using ergodic
methods.

\begin{thm}\cite[Theorem 1.5]{HK05}
Let $A\subset \Z$ with $\overline{d}(A)> \d >0$ and let $d\in \N$,
then
$$\{{\bf n}=(n_1,n_2,\ldots, n_k)\in \Z^d :\overline{d}
\Big(\bigcap _{\ep\in \{0,1\}^d}(A+\ep\cdot {\bf n})\Big)\ge
\d^{2^d}\}$$ is syndetic, where $\overline{d}(B)$ denotes the upper
density of $B\subset \Z$.
\end{thm}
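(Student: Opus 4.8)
Since the statement is Host and Kra's theorem, quoted here for comparison, the natural route is the ergodic one that underlies the whole circle of ideas in the introduction: the Furstenberg correspondence principle together with the structure theory of characteristic factors of \cite{HK05}. The plan is as follows. First I would use the correspondence principle to replace the combinatorial quantity by an ergodic average: given $A\subset\Z$ with $\ov d(A)\ge\d$, one produces a measure preserving system $(X,\mathcal B,\mu,T)$ and a set $\tilde A$ with $\mu(\tilde A)\ge\d$ such that for every $\mathbf n\in\Z^d$,
$$
\ov d\Big(\bigcap_{\ep\in\{0,1\}^d}(A+\ep\cdot\mathbf n)\Big)\ \ge\ \mu\Big(\bigcap_{\ep\in\{0,1\}^d}T^{-\ep\cdot\mathbf n}\tilde A\Big)=:I(\mathbf n),\qquad I(\mathbf n)=\int_X\prod_{\ep\in\{0,1\}^d}f\c T^{\ep\cdot\mathbf n}\,d\mu,
$$
where $f=1_{\tilde A}$. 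Hence it suffices to show that $\{\mathbf n\in\Z^d:I(\mathbf n)\ge\d^{2^d}\}$ is syndetic.

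Next I would extract an average lower bound from the uniformity seminorms. By their very definition,
$$
\lim_{N\to\infty}\frac1{N^d}\sum_{\mathbf n\in[0,N)^d}I(\mathbf n)=|\!|\!|f|\!|\!|_d^{\,2^d}\ \ge\ \Big(\int_X f\,d\mu\Big)^{2^d}\ \ge\ \d^{2^d},
$$
the middle inequality coming from the monotonicity $|\!|\!|f|\!|\!|_d\ge|\!|\!|f|\!|\!|_1\ge|\int f\,d\mu|$. Thus $I$ already meets the threshold on average, and the task is to promote this to a syndetic set of individual $\mathbf n$. For this I would invoke the Structure Theorem: the factor $\mathcal Z_{d-1}$ associated with $|\!|\!|\cdot|\!|\!|_d$ is characteristic and is an inverse limit of $(d-1)$-step nilsystems. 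Writing $g=\mathbb E(f\mid\mathcal Z_{d-1})$, the generalized von Neumann inequality gives
$$
\lim_{N\to\infty}\frac1{N^d}\sum_{\mathbf n\in[0,N)^d}\big|I(\mathbf n)-h(\mathbf n)\big|=0,\qquad h(\mathbf n):=\int\prod_{\ep\in\{0,1\}^d}g\c T^{\ep\cdot\mathbf n}\,d\mu,
$$
so that in density $I$ is governed by the nilfactor correlation $h$.

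The heart of the argument is then the syndeticity for $h$, which I would obtain from recurrence in a minimal nilsystem. On a topological model of $\mathcal Z_{d-1}$ the sequence $h$ is a nilsequence: $h(\mathbf n)=\Phi(y_{\mathbf n})$ for a continuous $\Phi\ge0$ on the minimal distal orbit closure $Y$ of a base point $y_0=y_{\mathbf 0}$, with $h(\mathbf 0)=\int g^{2^d}\,d\mu$. Since $0\le g\le1$ and $\int g\,d\mu\ge\d$, Jensen's inequality gives $h(\mathbf 0)\ge\d^{2^d}$. Minimality now makes return times syndetic: for every open $V\ni y_0$ the set $\{\mathbf n:y_{\mathbf n}\in V\}$ is syndetic. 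Applying this to $V=\{\Phi>\d^{2^d}-\eta\}$ shows that $\{\mathbf n:h(\mathbf n)>\d^{2^d}-\eta\}$ is syndetic for every $\eta>0$. The sharp constant is recovered by a dichotomy at the origin: if $g$ is $\mu$-a.e.\ constant then $h$ is constant and equal to $\mu(\tilde A)^{2^d}\ge\d^{2^d}$, so the good set is all of $\Z^d$; otherwise strict convexity gives $h(\mathbf 0)=\int g^{2^d}\,d\mu>\d^{2^d}$, and choosing $\eta$ below this gap produces a syndetic set on which $h(\mathbf n)\ge\d^{2^d}$ outright.

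The step I expect to be the main obstacle is the passage from $h$ back to the genuine correlation $I$, and thence, via the first display, to $\ov d(\cap_\ep(A+\ep\cdot\mathbf n))$. The reduction of the second paragraph controls $I-h$ only in density, not pointwise, so a syndetic lower bound for $h$ does not transfer to $I$ for free; at individual $\mathbf n$ the two genuinely differ. To get around this I would run the recurrence argument for $I$ itself, exploiting that $\mathcal Z_{d-1}$ is characteristic in order to evaluate the limit of $I$ along F{\o}lner sequences drawn from the (syndetic) return-time set and to compare it with the value $h(\mathbf 0)$ at the base point, so that the syndeticity constant lives in the fixed nilsystem $Y$ rather than in $A$. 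Controlling the interaction between the vanishing mean of $|I-h|$ and the positive density carried by the return-time sets is the delicate point on which the whole estimate — and the attainment of the exact bound $\d^{2^d}$ rather than merely $\d^{2^d}-\eta$ — ultimately turns.
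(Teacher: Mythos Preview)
The paper does not prove this statement at all: it is quoted verbatim from \cite[Theorem~1.5]{HK05} as background, to motivate the paper's own \emph{topological} analogue (the syndeticity of cube configurations inside a dynamically syndetic set). There is therefore no ``paper's own proof'' to compare against; the authors simply cite Host--Kra and move on.

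As for your sketch, it is a reasonable outline of the Host--Kra strategy, and you have correctly located the genuine difficulty. The first two paragraphs are fine: the correspondence principle reduces the problem to the ergodic inequality $I(\mathbf n)\ge\d^{2^d}$ on a syndetic set, and the cube-average of $I$ is indeed $|\!|\!| f|\!|\!|_d^{2^d}\ge(\int f)^{2^d}$. The problematic step is exactly the one you flag at the end. You want syndeticity for $I$ itself, not for its nilfactor surrogate $h$, and knowing that $I-h$ vanishes \emph{on average} over boxes does not give a pointwise comparison along an arbitrary syndetic set. Your proposed workaround---running a F{\o}lner argument along the return-time set and using that $\mathcal Z_{d-1}$ is characteristic---is the right instinct, but as written it is circular: characteristic factors control averages over F{\o}lner sequences in $\Z^d$, not over the possibly sparse (though syndetic) return set, and a syndetic set need not contain a F{\o}lner sequence along which the generalized von Neumann estimate applies.

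The way Host--Kra actually close this gap is not by comparing $I$ and $h$ pointwise, but by proving a \emph{uniform} version of the cube-average lower bound (uniform Wiener--Wintner/nilsequence equidistribution): one shows that the limit of the averages of $I(\mathbf n+\mathbf m)$ over $\mathbf n$ in boxes exists uniformly in the shift $\mathbf m$ and equals $|\!|\!| f|\!|\!|_d^{2^d}$, which immediately forces the super-level set $\{I\ge\d^{2^d}\}$ to be syndetic (a set whose translates all have positive lower density meeting a fixed bound cannot have arbitrarily long gaps). If you rewrite your last paragraph along these lines---uniform averages rather than pointwise transfer---the argument goes through and delivers the sharp constant $\d^{2^d}$ without the $\eta$ loss.
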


In \cite{HKM} the authors showed that the regionally proximal
relation of order $d$ is an equivalence relation for minimal distal
systems without using the enveloping semigroup theory except one
known result that the distal extension between minimal systems is
open (which is proved using the theory). In our situation we are
forced to use the theory. The main idea of the proof is the
following. First using the structure theory of a minimal system we
show that the face transformations acting on the diagonal points are
minimal, and then we prove some equivalence conditions for a pair
being regionally proximal of order $d$. A key lemma here is to
switch from a cubic point to a face point. Combining the minimality
with the conditions we show that the regionally proximal relation of
order $d$ is an equivalence relation for minimal systems. Finally we
show that $\RP^{[d]}$ can be lifted up from a factor to an extension
between two minimal systems, which implies that the factor induced
by $\RP^{[d]}$ is the maximal $d$-step nilfactor. It will be nice if
one could have a proof of the minimality of face actions on the
diagonal points without using the structure theory of minimal flows.

We remark that many results of the paper can be extended to abelian
group actions.

\subsection{Organization of the paper}
In Section \ref{section-pre}, we introduce some basic notions used
in the paper. Since we will use tools from abstract topological
dynamics, we collect basic facts about it in Appendix
\ref{section-tds}. In Section \ref{section-main}, main results of
the paper are discussed. The three sections followed are devoted to
give proofs of main results. Note that lots of results obtained
there have their independent interest. In the final section some
applications are given.

\subsection{Thanks}

We thank V. Bergelson, E. Glasner, W. Huang, H.F. Li, A. Maass for
helpful discussions. Particularly we thank E. Glasner for sending us
his note on the topic, and H.F. Li for the very careful reading
which helps us correct misprints and simplify some proofs.

\section{Preliminaries}\label{section-pre}

In this section we introduce notions about dynamical parallelepipeds
and nilsystems etc.. For more details see \cite{HK05, HKM}.

\subsection{Topological dynamical systems}

A {\em transformation} of a compact metric space X is a
homeomorphism of X to itself. A {\em topological dynamical system},
referred to more succinctly as just a {\em system}, is a pair $(X,
T)$, where $X$ is a compact metric space and $T : X \rightarrow  X$
is a transformation. We use $\rho(\cdot, \cdot)$ to denote the
metric in $X$. We also make use of a more general definition of a
topological system. That is, instead of just a single transformation
$T$, we will consider a countable abelian group of transformations.
We collect basic facts about topological dynamics under general
group actions in Appendix \ref{section-tds}.

A system $(X, T)$ is {\em transitive} if there exists some point
$x\in X$ whose orbit $\O(x,T)=\{T^nx: n\in \Z\}$ is dense in $X$ and
we call such a point a {\em transitive point}. The system is {\em
minimal} if the orbit of any point is dense in $X$. This property is
equivalent to say that X and the empty set are the only closed
invariant sets in $X$.


\subsection{Cubes and faces}

Let $X$ be a set, let $d\ge 1$ be an integer, and write $[d] = \{1,
2,\ldots , d\}$. We view $\{0, 1\}^d$ in one of two ways, either as
a sequence $\ep=(\ep_1,\ldots ,\ep_d)$ of $0'$s and $1'$s; or as a
subset of $[d]$. A subset $\ep$ corresponds to the sequence
$(\ep_1,\ldots, \ep_d)\in \{0,1\}^d$ such that $i\in \ep$ if and
only if $\ep_i = 1$ for $i\in [d]$. For example, ${\bf
0}=(0,0,\ldots,0)\in \{0,1\}^d$ is the same to $\emptyset \subset
[d]$.

If ${\bf n} = (n_1,\ldots, n_d)\in \Z^d$ and $\ep\in \{0,1\}^d$, we
define
$${\bf n}\cdot \ep = \sum_{i=1}^d n_i\ep_i .$$
If we consider $\ep$ as $\ep\subset [d]$, then ${\bf n}\cdot \ep  =
\sum_{i\in \ep} n_i .$

\medskip

We denote $X^{2^d}$ by $X^{[d]}$. A point ${\bf x}\in X^{[d]}$ can
be written in one of two equivalent ways, depending on the context:
$${\bf x} = (x_\ep :\ep\in \{0,1\}^d )= (x_\ep : \ep\subset [d]). $$
Hence $x_\emptyset =x_{\bf 0}$ is the first coordinate of ${\bf x}$.
As examples, points in $X^{[2]}$ are like
$$(x_{00},x_{10},x_{01},x_{11})=(x_{\emptyset}, x_{\{1\}},x_{\{2\}},x_{\{1,2\}}),$$
and points in $X^{[3]}$ are like
\begin{eqnarray*}
 & & (x_{000},x_{100},x_{010},x_{110},x_{001},x_{101},x_{011},x_{111}) \\
     &=&(x_{\emptyset}, x_{\{1\}},x_{\{2\}},x_{\{1,2\}},x_{\{3\}}, x_{\{1,3\}},x_{\{2,3\}},x_{\{1,2,3\}}).
                                \end{eqnarray*}

For $x \in X$, we write $x^{[d]} = (x, x,\ldots , x)\in  X^{[d]}$.
The diagonal of $X^{[d]}$ is $\D^{[d]} = \{x^{[d]}: x\in X\}$.
Usually, when $d=1$, denote the diagonal by $\D_X$ or $\D$ instead
of $\D^{[1]}$.

A point ${\bf x} \in X^{[d]}$ can be decomposed as ${\bf x} = ({\bf
x'},{\bf  x''})$ with ${\bf x}', {\bf x}''\in X^{[d-1]}$, where
${\bf x}' = (x_{\ep0} : \ep\in \{0,1\}^{d-1})$ and ${\bf x}''=
(x_{\ep1} : \ep\in \{0,1\}^{d-1})$. We can also isolate the first
coordinate, writing $X^{[d]}_* = X^{2^d-1}$ and then writing a point
${\bf x}\in X^{[d]}$ as ${\bf x} = (x_\emptyset, {\bf x}_*)$, where
${\bf x}_*= (x_\ep : \ep\neq \emptyset) \in X^{[d]}_*$.

\medskip

Identifying $\{0, 1\}^d$ with the set of vertices of the Euclidean
unit cube, a Euclidean isometry of the unit cube permutes the
vertices of the cube and thus the coordinates of a point $x\in
X^{[d]}$. These permutations are the {\em Euclidean permutations} of
$X^{[d]}$. For details see \cite{HK05}.

\subsection{Dynamical parallelepipeds}
\begin{de}
Let $(X, T)$ be a topological dynamical system and let $d\ge 1$ be
an integer. We define $\Q^{[d]}(X)$ to be the closure in $X^{[d]}$
of elements of the form $$(T^{{\bf n}\cdot \ep}x=T^{n_1\ep_1+\ldots
+ n_d\ep_d}x: \ep= (\ep_1,\ldots,\ep_d)\in\{0,1\}^d) ,$$ where ${\bf
n} = (n_1,\ldots , n_d)\in \Z^d$ and $ x\in X$. When there is no
ambiguity, we write $\Q^{[d]}$ instead of $\Q^{[d]}(X)$. An element
of $\Q^{[d]}(X)$ is called a (dynamical) {\em parallelepiped of
dimension $d$}.
\end{de}

It is important to note that $\Q^{[d]}$ is invariant under the
Euclidean permutations of $X^{[d]}$.

As examples, $\Q^{[2]}$ is the closure in $X^{[2]}=X^4$ of the set
$$\{(x, T^mx, T^nx, T^{n+m}x) : x \in X, m, n\in \Z\}$$ and $\Q^{[3]}$
is the closure in $X^{[3]}=X^8$ of the set $$\{(x, T^mx, T^nx,
T^{m+n}x, T^px, T^{m+p}x, T^{n+p}x, T^{m+n+p}x) : x\in X, m, n, p\in
\Z\}.$$

\begin{de}
Let $\phi: X\rightarrow Y$ and $d\in \N$. Define $\phi^{[d]}:
X^{[d]}\rightarrow Y^{[d]}$ by $(\phi^{[d]}{\bf x})_\ep=\phi x_\ep$
for every ${\bf x}\in X^{[d]}$ and every $\ep\subset [d]$.

Let $(X, T)$ be a system and $d\ge 1$ be an integer. The {\em
diagonal transformation} of $X^{[d]}$ is the map $T^{[d]}$.
\end{de}

\begin{de}
{\em Face transformations} are defined inductively as follows: Let
$T^{[0]}=T$, $T^{[1]}_1=\id \times T$. If
$\{T^{[d-1]}_j\}_{j=1}^{d-1}$ is defined already, then set
$$T^{[d]}_j=T^{[d-1]}_j\times T^{[d-1]}_j, \ j\in \{1,2,\ldots, d-1\},$$
$$T^{[d]}_d=\id ^{[d-1]}\times T^{[d-1]}.$$
\end{de}

It is easy to see that for $j\in [d]$, the face transformation
$T^{[d]}_j : X^{[d]}\rightarrow X^{[d]}$ can be defined by, for
every ${\bf x} \in X^{[d]}$ and $\ep\subset [d]$,$$T^{[d]}_j{\bf x}=
\left\{
  \begin{array}{ll}
    (T^{[d]}_j{\bf x})_\ep=Tx_\ep, & \hbox{$j\in \ep$;} \\
    (T^{[d]}_j{\bf x})_\ep=x_\ep, & \hbox{$j\not \in \ep$.}
  \end{array}
\right.$$

The {\em face group} of dimension $d$ is the group $\F^{[d]}(X)$ of
transformations of $X^{[d]}$ spanned by the face transformations.
The {\em parallelepiped group} of dimension $d$ is the group
$\G^{[d]}(X)$ spanned by the diagonal transformation and the face
transformations. We often write $\F^{[d]}$ and $\G^{[d]}$ instead of
$\F^{[d]}(X)$ and $\G^{[d]}(X)$, respectively. For $\G^{[d]}$ and
$\F^{[d]}$, we use similar notations to that used for $X^{[d]}$:
namely, an element of either of these groups is written as $S =
(S_\ep : \ep\in\{0,1\}^d)$. In particular, $\F^{[d]} =\{S\in
\G^{[d]}: S_\emptyset ={\rm id}\}$.

\medskip

For convenience, we denote the orbit closure of ${\bf x}\in X^{[d]}$
under $\F^{[d]}$ by $\overline{\F^{[d]}}({\bf x})$, instead of
$\overline{\O({\bf x}, \F^{[d]})}$.

It is easy to verify that $\Q^{[d]}$ is the closure in $X^{[d]}$ of
$$\{Sx^{[d]} : S\in \F^{[d]}, x\in X\}.$$

If $x$ is a transitive point of $X$, then $\Q^{[d]}$ is the closed
orbit of $x^{[d]}$ under the group $\G^{[d]}$.

\subsection{Nilmanifolds and nilsystems}

Let $G$ be a group. For $g, h\in G$, we write $[g, h] =
ghg^{-1}h^{-1}$ for the commutator of $g$ and $h$ and we write
$[A,B]$ for the subgroup spanned by $\{[a, b] : a \in A, b\in B\}$.
The commutator subgroups $G_j$, $j\ge 1$, are defined inductively by
setting $G_1 = G$ and $G_{j+1} = [G_j ,G]$. Let $k \ge 1$ be an
integer. We say that $G$ is {\em $k$-step nilpotent} if $G_{k+1}$ is
the trivial subgroup.

\medskip

Let $G$ be a $k$-step nilpotent Lie group and $\Gamma$ a discrete
cocompact subgroup of $G$. The compact manifold $X = G/\Gamma$ is
called a {\em $k$-step nilmanifold}. The group $G$ acts on $X$ by
left translations and we write this action as $(g, x)\mapsto gx$.
The Haar measure $\mu$ of $X$ is the unique probability measure on
$X$ invariant under this action. Let $\tau\in G$ and $T$ be the
transformation $x\mapsto \tau x$ of $X$. Then $(X, T, \mu)$ is
called a {\em basic $k$-step nilsystem}. When the measure is not
needed for results, we omit and write that $(X, T)$ is a basic
$k$-step nilsystem.

\medskip

We also make use of inverse limits of nilsystems and so we recall
the definition of an inverse limit of systems (restricting ourselves
to the case of sequential inverse limits). If $(X_i,T_i)_{i\in \N}$
are systems with $diam(X_i)\le M<\infty$ and $\phi_i:
X_{i+1}\rightarrow X_i$ are factor maps, the {\em inverse limit} of
the systems is defined to be the compact subset of $\prod_{i\in
\N}X_i$ given by $\{ (x_i)_{i\in \N }: \phi_i(x_{i+1}) = x_i,
i\in\N\}$, which is denoted by $\displaystyle
\lim_{\longleftarrow}\{X_i\}_{i\in \N}$. It is a compact metric
space endowed with the distance $\rho(x, y) =\sum_{i\in \N} 1/2^i
d_i(x_i, y_i )$. We note that the maps $\{T_i\}$ induce a
transformation $T$ on the inverse limit.

\begin{thm}[Host-Kra-Maass]\cite[Theorem 1.2]{HKM}\label{HKM}
Assume that $(X, T)$ is a transitive topological dynamical system
and let $d \ge 2$ be an integer. The following properties are
equivalent:
\begin{enumerate}
  \item If $x, y \in \Q^{[d]}(X)$ have $2^d-1$ coordinates in common, then $x =
y$.
  \item If $x, y \in X$ are such that $(x, y,\ldots , y) \in  \Q^{[d]}(X)$,
then $x = y$.
  \item $X$ is an inverse limit of basic $(d - 1)$-step minimal
nilsystems.
\end{enumerate}

A transitive system satisfying either of the equivalent properties
above is called a {\em $(d-1)$-step nilsystem} or a {\em system of
order $(d-1)$}.
\end{thm}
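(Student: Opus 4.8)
The plan is to establish the cycle of implications $(1)\Rightarrow(2)\Rightarrow(3)\Rightarrow(1)$, carrying them out in increasing order of difficulty: the trivial step $(1)\Rightarrow(2)$, then the ``nilpotent'' step $(3)\Rightarrow(1)$, and finally the structural step $(2)\Rightarrow(3)$, which is the real content. For $(1)\Rightarrow(2)$ I observe that the diagonal point $y^{[d]}$ always lies in $\Q^{[d]}(X)$; if in addition $(x,y,\dots,y)\in\Q^{[d]}(X)$, then this point and $y^{[d]}$ agree in all $2^d-1$ coordinates indexed by $\ep\neq\emptyset$, so $(1)$ forces $x=y$.

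For $(3)\Rightarrow(1)$ I would first prove the uniqueness property on a single basic $(d-1)$-step nilsystem $X=G/\Gamma$ and then pass to inverse limits. On a nilmanifold one identifies $\Q^{[d]}(X)$ with the orbit of the diagonal point $(\Gamma,\dots,\Gamma)$ under a nilpotent subgroup of $G^{[d]}$, namely the group generated by the diagonal translations and the face translations by elements of $G$. The decisive point is that, since $G$ is $(d-1)$-step nilpotent, $G_d$ is trivial; writing out the commutator identities in this cube group shows that the coordinate indexed by the top vertex $\ep=[d]$ is an explicit function of the other $2^d-1$ coordinates, which is exactly $(1)$ for that distinguished vertex, and the Euclidean permutations then give it for every vertex. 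The property is inherited by the inverse limit $\lim_{\longleftarrow}X_i$ because a point there is determined by its projections and parallelepipeds map to parallelepipeds under the factor maps. (One can alternatively prove the purely combinatorial equivalence $(1)\Leftrightarrow(2)$ directly, by embedding two parallelepipeds sharing all but one coordinate as opposite faces of a $(d+1)$-dimensional parallelepiped and applying $(2)$ to a degenerate face.)

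The main obstacle is $(2)\Rightarrow(3)$, which I would approach by induction on $d$. The first task is to show that $(2)$ forces $(X,T)$ to be distal, hence, being transitive, minimal; the idea is that a nontrivial proximal pair could be spread, using the action of the face group $\F^{[d]}$ on orbit closures in $\Q^{[d]}(X)$, into a parallelepiped of the prohibited shape $(x,y,\dots,y)$ with $x\neq y$. Granting distality, one invokes the structure theory of distal minimal systems to present $X$ as a tower of isometric extensions, and the heart of the matter is to show that property $(2)$ constrains this tower: there should be a maximal factor $\pi\colon X\to Z$ on which the order-$(d-2)$ analogue of the uniqueness property holds, so that by the inductive hypothesis $Z$ is an inverse limit of $(d-2)$-step nilsystems.

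It then remains to show that the extension $\pi$ is an abelian group (isometric) extension and that, together with the nilpotent structure already present on $Z$, it assembles $X$ into an inverse limit of $(d-1)$-step nilsystems. I expect the genuinely delicate points to be three: proving that $\pi$ is open and that the extension is of the required type; extracting from the cube structure a nilpotent group of transformations acting transitively on $X$, so that $X$ becomes a homogeneous space of a nilpotent group; and verifying that the nilpotency class is exactly $d-1$. This is precisely the stage at which the topological analogue of the Host--Kra seminorm machinery and the enveloping-semigroup theory of distal systems must be brought in, and it is where essentially all of the difficulty of the theorem is concentrated.
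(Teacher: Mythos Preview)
This theorem is not proved in the present paper at all: it is stated as Theorem~\ref{HKM} with the attribution ``Host--Kra--Maass \cite[Theorem 1.2]{HKM}'' and is used throughout as a black box (for instance in the proof of Proposition~\ref{diagonal}). There is therefore no proof in the paper to compare your proposal against; the authors simply import the result and the accompanying definition of a system of order $d-1$.

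As for the proposal itself, your outline of the cycle $(1)\Rightarrow(2)\Rightarrow(3)\Rightarrow(1)$ is the natural one and matches in spirit the architecture of the original argument in \cite{HKM}. The easy implications are fine as you wrote them. For $(2)\Rightarrow(3)$, however, your sketch is only a high-level plan and does not yet touch the genuinely hard parts: producing from the uniqueness hypothesis a concrete nilpotent Lie group acting on $X$ (or on an inverse-limit approximation of it), and showing that the resulting homogeneous space is a nilmanifold of the correct step. Phrases like ``the extension is of the required type'' and ``extracting from the cube structure a nilpotent group of transformations'' are placeholders for arguments that occupy the bulk of \cite{HKM} (and, in the ergodic setting, of \cite{HK05}); none of the machinery developed in the present paper---the minimality of the face action, the key Lemma~\ref{host-trick}, the lifting theorem---is aimed at or sufficient for this direction. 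So while nothing you wrote is wrong, the proposal for $(2)\Rightarrow(3)$ is a statement of intent rather than a proof, and you should consult \cite{HKM} directly for the actual construction.
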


\subsection{Definition of the regionally proximal relations}
\begin{de}
Let $(X, T)$ be a system and let $d\ge 1$ be an integer. A pair $(x,
y) \in X\times X$ is said to be {\em regionally proximal of order
$d$} if for any $\d  > 0$, there exist $x', y'\in X$ and a vector
${\bf n} = (n_1,\ldots , n_d)\in\Z^d$ such that $\rho(x, x') < \d,
\rho(y, y') <\d$, and $$ \rho(T^{{\bf n}\cdot \ep}x', T^{{\bf
n}\cdot \ep}y') < \d\ \text{for any nonempty $\ep\subset [d]$}.$$
(In other words, there exists $S\in \F^{[d]}$ such that $\rho(S_\ep
x', S_\ep y') <\d$ for every $\ep\neq \emptyset$.) The set of
regionally proximal pairs of order $d$ is denoted by $\RP^{[d]}$ (or
by $\RP^{[d]}(X)$ in case of ambiguity), which is called {\em the
regionally proximal relation of order $d$}.
\end{de}

It is easy to see that $\RP^{[d]}$ is a closed and invariant
relation for all $d\in \N$. Note that
\begin{equation*}
     \ldots \subseteq \RP^{[d+1]}\subseteq
    \RP^{[d]}\subseteq \ldots \subseteq \RP^{[2]}\subseteq \RP^{[1]}=\RP(X).
\end{equation*}

By the definition it is easy to verify the following equivalent
condition for $\RP^{[d]}$, see \cite{HKM}.
\begin{lem}\label{RPd-def}
Let $(X, T)$ be a minimal system and let $d\ge 1$ be an integer. Let
$x, y \in X$. Then $(x, y) \in \RP^{[d]}$ if and only if there is
some ${\bf a}_*\in X^{[d]}_*$ such that $(x, {\bf a}_*, y, {\bf
a}_*) \in \Q^{[d+1]}$.
\end{lem}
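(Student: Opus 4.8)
The plan is to establish the two implications separately; the backward one is a direct unwinding of the definition of $\Q^{[d+1]}$ as a closure, while the forward one is where minimality of $(X,T)$ is genuinely needed. Throughout I identify a point of $X^{[d+1]}$ with its decomposition $({\bf x}',{\bf x}'')$, ${\bf x}',{\bf x}''\in X^{[d]}$, so that the target point $(x,{\bf a}_*,y,{\bf a}_*)$ is the one with $x_{\ep 0}=x$, $x_{\ep 1}=y$ for $\ep=\emptyset$ and $x_{\ep 0}=x_{\ep 1}=a_\ep$ for $\ep\neq\emptyset$. For a base point $z$ and a vector $({\bf n}',m)\in\Z^d\times\Z$ the corresponding generator of $\Q^{[d+1]}$ has $\ep 0$-coordinate $T^{{\bf n}'\cdot\ep}z$ and $\ep 1$-coordinate $T^{{\bf n}'\cdot\ep+m}z$; this book-keeping is the only computation involved.

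For the direction ($\Leftarrow$), assume $(x,{\bf a}_*,y,{\bf a}_*)\in\Q^{[d+1]}$. Given $\d>0$ I would choose a generator $\d$-close to it, say with base $z$ and vector $({\bf n}',m)$, and then set $x'=z$, $y'=T^m z$. Comparing coordinates gives $d(x,x')<\d$, $d(y,y')<\d$, while for each $\ep\neq\emptyset$ both $T^{{\bf n}'\cdot\ep}x'$ and $T^{{\bf n}'\cdot\ep}y'=T^{{\bf n}'\cdot\ep+m}z$ are within $\d$ of $a_\ep$, hence within $2\d$ of each other. As $\d$ is arbitrary, ${\bf n}'$ witnesses $(x,y)\in\RP^{[d]}$.

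For the direction ($\Rightarrow$), the idea is to build generators of $\Q^{[d+1]}$ from the regional-proximality data and take a limit. The main obstacle is that in the definition of $\RP^{[d]}$ the approximants $x'$ and $y'$ are unrelated, whereas a generator of $\Q^{[d+1]}$ has a single base point; bridging this gap is exactly the role of minimality. Concretely, for each $k$ I apply the definition with $\d=1/k$ to get $x_k',y_k'$ and ${\bf n}_k'\in\Z^d$ with $d(x,x_k'),d(y,y_k')<1/k$ and $d(T^{{\bf n}_k'\cdot\ep}x_k',T^{{\bf n}_k'\cdot\ep}y_k')<1/k$ for all $\ep\neq\emptyset$. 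Since for the fixed vector ${\bf n}_k'$ the finitely many homeomorphisms $T^{{\bf n}_k'\cdot\ep}$ are uniformly continuous, I can pick $\gamma_k\in(0,1/k]$ with $d(u,v)<\gamma_k$ forcing $d(T^{{\bf n}_k'\cdot\ep}u,T^{{\bf n}_k'\cdot\ep}v)<1/k$, and then use minimality (density of the orbit of $x_k'$) to choose $m_k$ with $d(T^{m_k}x_k',y_k')<\gamma_k$. The generator $P_k$ with base $x_k'$ and vector $({\bf n}_k',m_k)$ then has $\emptyset 0$-coordinate tending to $x$, $\emptyset 1$-coordinate within $2/k$ of $y$, and for each $\ep\neq\emptyset$ its $\ep 0$- and $\ep 1$-coordinates both within $1/k$ of $T^{{\bf n}_k'\cdot\ep}y_k'$, hence within $2/k$ of each other. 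Finally I would pass (by compactness of $X^{[d+1]}$) to a convergent subsequence of $P_k$; its limit lies in the closed set $\Q^{[d+1]}$, equals $x$ and $y$ in the two $\emptyset$-coordinates, and has equal $\ep 0$- and $\ep 1$-coordinates for $\ep\neq\emptyset$, i.e.\ is of the required form $(x,{\bf a}_*,y,{\bf a}_*)$. The one delicate point to flag is that the modulus of continuity of $T^{{\bf n}_k'\cdot\ep}$ may degenerate as $k\to\infty$, which is precisely why $\gamma_k$ and $m_k$ must be chosen after ${\bf n}_k'$ and depending on $k$; this causes no trouble because only finitely many fixed maps occur for each fixed $k$.
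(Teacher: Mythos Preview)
Your proof is correct. Both directions are handled cleanly: the backward implication is a straightforward unwinding of the closure definition of $\Q^{[d+1]}$, and in the forward direction you correctly identify that minimality is needed to align the two a priori unrelated approximants $x_k'$ and $y_k'$ onto a single orbit, via the choice of $m_k$ with $T^{m_k}x_k'$ close to $y_k'$. The care you take in choosing $\gamma_k$ \emph{after} ${\bf n}_k'$ (so that the uniform continuity of the finitely many maps $T^{{\bf n}_k'\cdot\ep}$ is available) is exactly the right point to flag, and your $2/k$ estimates suffice since only convergence of a subsequence is needed.

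As for comparison with the paper: there is essentially nothing to compare. The paper does not give a proof of this lemma; it simply remarks that the equivalence is easy to verify from the definition and refers to \cite{HKM}. Your argument is precisely the natural verification that the paper omits, so it is in line with what the authors intend rather than an alternative route.
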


\begin{rem}\label{rem-RP1}
When $d=1$, $\RP^{[1]}$ is the classical regionally proximal
relation. If $(X,T)$ is minimal, it is easy to verify directly the
following useful fact:
$$(x,y)\in \RP=\RP^{[1]} \Leftrightarrow (x,x,y,x)\in \Q^{[2]} \Leftrightarrow (x,y,y,y)\in \Q^{[2]}.$$
\end{rem}

\section{Main results}\label{section-main}

In this section we will state the main results of the paper.

\subsection{$\F^{[d]}$-minimal sets in $\Q^{[d]}$}

To show $\RP^{[d]}$ is an equivalence relation we are forced to
investigate the $\F^{[d]}$-minimal sets in $\Q^{[d]}$ and the
equivalent conditions for $\RP^{[d]}$. Those are done in Theorem
\ref{main1} and Theorem \ref{main2} respectively.

First recall that $(\Q^{[d]}, \G^{[d]})$ is a minimal system, which
is mentioned in \cite{HKM}. But we need to know $\F^{[d]}$-minimal
sets in $\Q^{[d]}$. Let $(X, T)$ be a system and $x\in X$. Recall
that $\overline{\F^{[d]}}({\bf x})=\overline{\O({\bf x}, \F^{[d]})}$
for ${\bf x}\in X^{[d]}$.  Set
$$\Q^{[d]}[x]=\{{\bf z}\in \Q^{[d]}(X): z_\emptyset =x \}.$$

\begin{thm} \label{main1}
Let $(X, T)$ be a minimal system and $d\in \N$. Then
\begin{enumerate}
  \item $(\overline{\F^{[d]}}(x^{[d]}),\F^{[d]})$ is minimal for all
  $x\in X$.

  \item $(\overline{\F^{[d]}}(x^{[d]}), \F^{[d]})$ is the unique
$\F^{[d]}$-minimal subset in $\Q^{[d]}[x]$ for all $x\in X$.
\end{enumerate}
\end{thm}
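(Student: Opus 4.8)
The plan is to deduce both statements from a single fact: the diagonal point $x^{[d]}$ is an almost periodic (uniformly recurrent) point of the system $(X^{[d]},\F^{[d]})$. Indeed, for a group action the orbit closure of a point is minimal precisely when the point is almost periodic (Appendix), so almost periodicity of $x^{[d]}$ yields (1). Before attacking this I record the elementary structural facts I will lean on: the group $\G^{[d]}$ is abelian (each generator is a coordinatewise power of $T$); the map $S\mapsto S_\emptyset$ is a homomorphism $\G^{[d]}\to\langle T\rangle$ with kernel $\F^{[d]}=\{S\in\G^{[d]}:S_\emptyset=\id\}$, so $\F^{[d]}$ is normal in $\G^{[d]}$ and $\G^{[d]}=\F^{[d]}\cdot\langle T^{[d]}\rangle$ with the two commuting factors; every element of $\F^{[d]}$ fixes the $\emptyset$-coordinate, whence $\overline{\F^{[d]}}(x^{[d]})\subseteq \Q^{[d]}[x]$; and $(\Q^{[d]},\G^{[d]})$ is minimal, so $x^{[d]}$ is at least almost periodic for the larger group $\G^{[d]}$.

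I would prove almost periodicity of $x^{[d]}$ under $\F^{[d]}$ by induction on $d$. For $d=1$ the orbit of $(x,x)$ under $\F^{[1]}=\langle \id\times T\rangle$ has closure $\{x\}\times X$, conjugate to $(X,T)$ and hence minimal. For the inductive step, write a point as $({\bf x}',{\bf x}'')$ and use the projection $p'\colon X^{[d]}\to X^{[d-1]}$, ${\bf x}\mapsto {\bf x}'$. From the inductive definition of the face transformations, $p'$ intertwines the $\F^{[d]}$-action with the $\F^{[d-1]}$-action via the surjection $T^{[d]}_j\mapsto T^{[d-1]}_j$ ($j<d$), $T^{[d]}_d\mapsto \id$, and $p'(x^{[d]})=x^{[d-1]}$. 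Thus $p'$ carries $(\overline{\F^{[d]}}(x^{[d]}),\F^{[d]})$ onto $(\overline{\F^{[d-1]}}(x^{[d-1]}),\F^{[d-1]})$, minimal by induction. The kernel of $p'$ on $\F^{[d]}$ is exactly $\langle T^{[d]}_d\rangle=\langle\id^{[d-1]}\times T^{[d-1]}\rangle$, which acts on the fibre over $x^{[d-1]}$ through the diagonal $T^{[d-1]}$-action on the second block, generating on the central fibre point a copy of $(\D^{[d-1]},T^{[d-1]})\cong(X,T)$, again minimal. So everything reduces to showing that this extension forces the top to be minimal, i.e. that $x^{[d]}$ is almost periodic given that its image and the relevant fibre dynamics are.

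This last reduction is the heart of the matter and where I expect the real difficulty, since almost periodicity does not in general pass from a group to a subgroup (here from $\G^{[d]}$, which we already have, to the normal subgroup $\F^{[d]}$). To bridge the gap I would work in the enveloping semigroup $E=E(\Q^{[d]},\G^{[d]})$: as $x^{[d]}$ is almost periodic for $\G^{[d]}$ there is a minimal idempotent $u\in E$ with $u\,x^{[d]}=x^{[d]}$, and the goal is to manufacture from it a minimal idempotent lying in the subsemigroup $\overline{\F^{[d]}}\subseteq E$ that still fixes $x^{[d]}$; this is what the machinery of the Appendix (minimal left ideals, the proximal relation, behaviour of $\Q^{[d]}$ under factor maps) is for. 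Here I would invoke the structure theory of minimal systems, presenting $(X,T)$ as a tower of open extensions that are alternately isometric (hence distal) and proximal, and lift minimality of the diagonal-face orbit up the tower: the distal steps go as in Host-Kra-Maass, while the proximal steps are exactly where one must use the enveloping semigroup, exploiting that proximality lets idempotents collapse fibres onto the diagonal. Proving that the face orbit of the diagonal point survives as a minimal set through a proximal extension is the main obstacle.

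Finally, (2) follows from (1) together with the same mechanism. Let $N\subseteq \Q^{[d]}[x]$ be any $\F^{[d]}$-minimal set and pick ${\bf z}\in N$, so $z_\emptyset=x=(x^{[d]})_\emptyset$. Since ${\bf z}$ and $x^{[d]}$ agree in the first coordinate and both lie in the $\G^{[d]}$-minimal system $\Q^{[d]}$, they are proximal along the face group: applying a suitable minimal idempotent $u\in\overline{\F^{[d]}}$, chosen as above to fix the diagonal, sends ${\bf z}$ into $\overline{\F^{[d]}}(x^{[d]})$ and in fact produces $x^{[d]}\in\overline{\F^{[d]}}({\bf z})=N$. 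Hence $x^{[d]}\in N$, and by minimality of both sets $N=\overline{\F^{[d]}}(x^{[d]})$. The crux of (2) is again the idempotent/proximality step, namely that sharing the first coordinate with the diagonal forces a point of $\Q^{[d]}[x]$ to contain $x^{[d]}$ in its face-orbit closure; this is closely tied to the switch from a cubic point to a face point advertised in the introduction.
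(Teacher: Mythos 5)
Your proposal contains two genuine gaps, one in each part. For part (1), your inductive reduction via the projection $p'\colon ({\bf x}',{\bf x}'')\mapsto {\bf x}'$ is fine as far as it goes, but it stalls exactly at the hard point (passing almost periodicity from $\G^{[d]}$ down to the subgroup $\F^{[d]}$), which you acknowledge; the problem is that your fallback rests on a false structural claim. You propose to present $(X,T)$ as a tower of extensions that are alternately isometric and proximal --- that is, you assume $X$ is a PI system. General minimal systems are not PI: the structure theorem (Theorem \ref{structure}) only provides a \emph{proximal} extension $X_\infty\to X$ such that $X_\infty$ is a RIC \emph{weakly mixing} extension of a strictly PI system $Y_\infty$. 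The weakly mixing extension cannot be dispensed with, and handling it is a substantial part of the paper's proof of (1): it requires Lemma \ref{wm-lemma} (that $\{x\}\times\left(\pi^{-1}(y)\right)^{2^d-1}\subset\overline{\F^{[d]}}({\bf x})$ for fibre points ${\bf x}$), which in turn rests on Theorem \ref{wm-extension} (a transitive point of $R^n_\pi$ lying in a single fibre), proved in the appendix via the Ellis trick. Your treatment of the proximal and distal steps does match Propositions \ref{proximal-extension1} and \ref{distal-extension1} in spirit, but without the weakly mixing step (Proposition \ref{wm-extension1}) the induction up the tower never reaches $X$.

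For part (2), the deduction is not sound even granting (1). You claim that because ${\bf z}\in\Q^{[d]}[x]$ and $x^{[d]}$ share their $\emptyset$-coordinate, ``they are proximal along the face group,'' and that applying a minimal idempotent $u\in\overline{\F^{[d]}}\subseteq E(\Q^{[d]},\G^{[d]})$ fixing $x^{[d]}$ yields $x^{[d]}\in\overline{\F^{[d]}}({\bf z})$. Sharing one coordinate gives no proximality under $\F^{[d]}$ at all, and $u{\bf z}$ is merely some point of the $\F^{[d]}$-minimal set $N$; nothing forces it to equal (or approximate) $x^{[d]}$, since $u$ was chosen relative to $x^{[d]}$, not relative to ${\bf z}$. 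The statement you need --- that every $\F^{[d]}$-minimal point of $\Q^{[d]}[x]$ contains $x^{[d]}$ in its face-orbit closure --- is precisely the content of the paper's Lemma \ref{host-trick} combined with the induction of Proposition \ref{unique-minimal-set}: one first uses (1) and the decomposition ${\bf x}=({\bf x}',{\bf x}'')$ to reach a point of the special form $(x^{[d-1]},{\bf w})$, and the lemma then moves this ``cubic point'' into $\overline{\F^{[d]}}(x^{[d]})$ by a delicate argument involving the two enveloping-semigroup representations $\pi_1^*,\pi_2^*$ and the Ellis group of $(\overline{\F^{[d-1]}}(x^{[d-1]}),x^{[d-1]})$. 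This is the technical heart of the theorem; asserting it as a one-line consequence of agreeing in the first coordinate is a gap, not a compression.
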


\subsection{$\RP^{[d]}$ is an equivalence relation}

With the help of Theorem \ref{main1}, we can prove that $\RP^{[d]}$
is an equivalence relation. First we have the following equivalent
conditions for $\RP^{[d]}$.

\begin{thm}\label{main2}
Let $(X, T)$ be a minimal system and $d\in \N$. Then the following
conditions are equivalent:
\begin{enumerate}
\item $(x,y)\in \RP^{[d]}$;
\item $(x,y,y,\ldots,y)=(x,y^{[d+1]}_*)\in \Q^{[d+1]}$;
\item $(x,y,y,\ldots,y)=(x,y^{[d+1]}_*) \in
\overline{\F^{[d+1]}}(x^{[d+1]}).$
\end{enumerate}
\end{thm}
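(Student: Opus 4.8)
The plan is to prove the cycle of implications, establishing $(1)\Leftrightarrow(2)$ and $(2)\Leftrightarrow(3)$. Two of the four implications are immediate. Since $\overline{\F^{[d+1]}}(x^{[d+1]})\subseteq \Q^{[d+1]}$, the implication $(3)\Rightarrow(2)$ is trivial. For $(2)\Rightarrow(1)$, I would just read off the face decomposition $X^{[d+1]}=X^{[d]}\times X^{[d]}$: the point $(x,y^{[d+1]}_*)$ decomposes as the pair of faces $(x,y^{[d]}_*)$ and $(y,y^{[d]}_*)=y^{[d]}$, that is, $(x,y^{[d+1]}_*)=(x,y^{[d]}_*,y,y^{[d]}_*)$. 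Hence $(2)$ is precisely the hypothesis of Lemma \ref{RPd-def} with the choice ${\bf a}_*=y^{[d]}_*$, and $(x,y)\in\RP^{[d]}$ follows.

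The heart of the matter is $(1)\Rightarrow(2)$, the step that switches a cubic point to the desired face point. Starting from $(x,y)\in\RP^{[d]}$, Lemma \ref{RPd-def} produces \emph{some} ${\bf a}_*\in X^{[d]}_*$ with ${\bf z}=(x,{\bf a}_*,y,{\bf a}_*)\in\Q^{[d+1]}$, and the goal is to move both copies of ${\bf a}_*$ to $y^{[d]}_*$ at once. The top face of ${\bf z}$ (the coordinates $\ep$ with $d+1\in\ep$) is $(y,{\bf a}_*)$, which lies in $\Q^{[d]}[y]$ since a face of a parallelepiped is again a parallelepiped. As $\overline{\F^{[d]}}((y,{\bf a}_*))$ is a nonempty closed $\F^{[d]}$-invariant subset of $\Q^{[d]}[y]$, it contains a minimal set, which by Theorem \ref{main1}(2) must equal $\overline{\F^{[d]}}(y^{[d]})$. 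Therefore there is a net $S_\gamma\in\F^{[d]}$ with $S_\gamma(y,{\bf a}_*)\to y^{[d]}$, i.e. $(S_\gamma)_*{\bf a}_*\to y^{[d]}_*$ (each $S_\gamma$ fixes the $\emptyset$-coordinate). Applying to ${\bf z}$ the diagonal lift $\tilde S_\gamma\in\F^{[d+1]}$ generated by $T^{[d+1]}_j$, $j\in[d]$ — which, because $T^{[d+1]}_j=T^{[d]}_j\times T^{[d]}_j$, acts as $S_\gamma$ on both faces and so preserves the shape $(x,\cdot,y,\cdot)$ — gives $\tilde S_\gamma{\bf z}=(x,(S_\gamma)_*{\bf a}_*,y,(S_\gamma)_*{\bf a}_*)\to(x,y^{[d]}_*,y,y^{[d]}_*)=(x,y^{[d+1]}_*)$. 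Since $\Q^{[d+1]}$ is closed and $\F^{[d+1]}$-invariant, the limit $(x,y^{[d+1]}_*)$ lies in $\Q^{[d+1]}$, which is $(2)$.

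For $(2)\Rightarrow(3)$ I would upgrade membership in $\Q^{[d+1]}$ to membership in the minimal set by showing that $(x,y^{[d+1]}_*)$ is itself a minimal point. Because every $S\in\F^{[d+1]}$ fixes the $\emptyset$-coordinate, the orbit closure $\overline{\F^{[d+1]}}((x,y^{[d+1]}_*))$ equals $\{x\}\times\overline{\F^{[d+1]}_*}(y^{[d+1]}_*)$, where $\F^{[d+1]}_*$ denotes the induced action on $X^{[d+1]}_*$. The projection ${\bf p}\mapsto{\bf p}_*$ restricted to $\overline{\F^{[d+1]}}(y^{[d+1]})$ is injective (all these points have $\emptyset$-coordinate $y$) and equivariant, so by the minimality in Theorem \ref{main1}(1) its image $\overline{\F^{[d+1]}_*}(y^{[d+1]}_*)$ is minimal; hence $\overline{\F^{[d+1]}}((x,y^{[d+1]}_*))$ is a minimal subset of $\Q^{[d+1]}[x]$. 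By the uniqueness in Theorem \ref{main1}(2) it must coincide with $\overline{\F^{[d+1]}}(x^{[d+1]})$, so $(x,y^{[d+1]}_*)\in\overline{\F^{[d+1]}}(x^{[d+1]})$, giving $(3)$.

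I expect the main obstacle to be the switching step $(1)\Rightarrow(2)$: the witness ${\bf a}_*$ supplied by Lemma \ref{RPd-def} is completely uncontrolled, and the decisive point is that a \emph{single} net of face transformations can be applied to both faces simultaneously (via the product structure $T^{[d+1]}_j=T^{[d]}_j\times T^{[d]}_j$), while the uniqueness of the $\F^{[d]}$-minimal set in the fiber $\Q^{[d]}[y]$ guarantees that this net drives ${\bf a}_*$ exactly to $y^{[d]}_*$. Both halves of Theorem \ref{main1} are thus essential: part (2) for the switch in $(1)\Rightarrow(2)$, and part (1) for the minimality of the diagonal point used in $(2)\Rightarrow(3)$.
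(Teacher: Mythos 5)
Your proposal is correct and follows essentially the same route as the paper: the paper also reduces everything to Lemma \ref{RPd-def} plus Theorem \ref{main1}, drives the face $(y,{\bf a}_*)\in\Q^{[d]}[y]$ to $y^{[d]}$ via the uniqueness statement of Theorem \ref{main1}-(2), applies the diagonal lift $F_k\times F_k\in\F^{[d+1]}$ to obtain $(x,y^{[d+1]}_*)\in\Q^{[d+1]}$, and then uses minimality of $y^{[d+1]}$ (transferred to $(x,y^{[d+1]}_*)$, since $\F^{[d+1]}$ fixes the $\emptyset$-coordinate) together with uniqueness to land in $\overline{\F^{[d+1]}}(x^{[d+1]})$. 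The only difference is organizational: the paper proves $(1)\Rightarrow(3)$ in one stroke, while you factor it as $(1)\Rightarrow(2)\Rightarrow(3)$ and spell out the minimality-transfer argument that the paper leaves implicit.
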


\begin{proof}
$(3)\Rightarrow (2)$ is obvious. $(2)\Rightarrow (1)$ follows from
Lemma \ref{RPd-def}. Hence it suffices to show $(1)\Rightarrow (3)$.

Let $(x,y)\in \RP^{[d]}$. Then by Lemma \ref{RPd-def} there is some
${\bf a}_*\in X^{[d]}_*$ such that $(x, {\bf a}_*, y, {\bf a}_*) \in
\Q^{[d+1]}$. Observe that $(y, {\bf a}_*)\in \Q^{[d]}$. By Theorem
\ref{main1}-(2), there is a sequence $\{F_k\}\subset \F^{[d]}$ such
that $F_k (y, {\bf a}_*)\to y^{[d]}, k\to \infty$. Hence
$$F_k\times F_k(x, {\bf a}_*, y, {\bf a}_*)\to (x,y^{[d]}_*,y,y^{[d]}_*)=
(x,y^{[d+1]}_*),\ k\to \infty.$$ Since $F_k\times F_k\in \F^{[d+1]}$
and $(x, {\bf a}_*, y, {\bf a}_*) \in \Q^{[d+1]}$, we have that
$(x,y^{[d+1]}_*)\in \Q^{[d+1]}$.

By Theorem \ref{main1}-(1), $y^{[d+1]}$ is $\F^{[d+1]}$-minimal. It
follows that $(x,y^{[d+1]}_*)$ is also $\F^{[d+1]}$-minimal. Now
$(x,y^{[d+1]}_*)\in \Q^{[d+1]}[x]$ is $\F^{[d+1]}$-minimal and by
Theorem \ref{main1}-(2), $\overline{\F^{[d+1]}}(x^{[d+1]})$ is the
unique $\F^{[d+1]}$-minimal subset in $\Q^{[d+1]}[x]$. Hence we have
that $(x,y^{[d+1]}_*) \in \overline{\F^{[d+1]}}(x^{[d+1]})$, and the
proof is completed.
\end{proof}

By Theorem \ref{main2}, we have the following theorem immediately.

\begin{thm}\label{main3}
Let $(X, T)$ be a minimal system and $d\in \N$. Then $\RP^{[d]}(X)$
is an equivalence relation.
\end{thm}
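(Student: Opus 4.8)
The plan is to establish the theorem through the characterization of $\RP^{[d]}$ provided by Theorem \ref{main2}, since reflexivity and symmetry are immediate and only transitivity carries content. Reflexivity holds because $(x,x)$ trivially satisfies the definition (take $x'=y'=x$ and any ${\bf n}$). Symmetry is built into the definition: the condition $d(T^{{\bf n}\cdot\ep}x',T^{{\bf n}\cdot\ep}y')<\d$ is symmetric in $x'$ and $y'$, so $(x,y)\in\RP^{[d]}$ forces $(y,x)\in\RP^{[d]}$. Thus the entire task reduces to transitivity.

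Suppose then that $(x,y)\in\RP^{[d]}$ and $(y,z)\in\RP^{[d]}$; I want $(x,z)\in\RP^{[d]}$. Using the equivalence $(1)\Leftrightarrow(3)$ of Theorem \ref{main2}, the two hypotheses translate into
$$(x,y^{[d+1]}_*)\in\overline{\F^{[d+1]}}(x^{[d+1]}),\qquad (y,z^{[d+1]}_*)\in\overline{\F^{[d+1]}}(y^{[d+1]}),$$
and the conclusion I seek is $(x,z^{[d+1]}_*)\in\overline{\F^{[d+1]}}(x^{[d+1]})$, which by $(3)\Rightarrow(1)$ yields $(x,z)\in\RP^{[d]}$. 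The structural fact I would exploit is that every $S\in\F^{[d+1]}$ satisfies $S_\emptyset=\id$; that is, the face group fixes the first ($\emptyset$) coordinate of any point of $X^{[d+1]}$. From the second membership I would choose a sequence $\{G_k\}\subset\F^{[d+1]}$ with $G_ky^{[d+1]}\to(y,z^{[d+1]}_*)$; concretely $G_{k,\ep}y\to z$ for every nonempty $\ep$ while $G_{k,\emptyset}y=y$.

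Next I would apply the very same $G_k$ to the point $(x,y^{[d+1]}_*)$. Because $G_{k,\emptyset}=\id$, the first coordinate $x$ is preserved, while each remaining coordinate is $G_{k,\ep}y\to z$; hence $G_k(x,y^{[d+1]}_*)\to(x,z^{[d+1]}_*)$. Since $(x,y^{[d+1]}_*)$ lies in the $\F^{[d+1]}$-invariant closed set $\overline{\F^{[d+1]}}(x^{[d+1]})$, so does each $G_k(x,y^{[d+1]}_*)$, and by closedness the limit $(x,z^{[d+1]}_*)$ belongs to it as well. This is exactly the membership needed, so transitivity follows.

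The serious work has already been front-loaded into Theorems \ref{main1} and \ref{main2}; given them, the only genuine obstacle here is identifying the right mechanism for transitivity, namely that a face transformation carrying $y^{[d+1]}$ toward $(y,z^{[d+1]}_*)$ simultaneously carries $(x,y^{[d+1]}_*)$ toward $(x,z^{[d+1]}_*)$, precisely because the $\emptyset$-coordinate is frozen by $\F^{[d+1]}$. I would verify carefully that the limiting point lands back inside $\overline{\F^{[d+1]}}(x^{[d+1]})$ rather than some other minimal set, but this is guaranteed purely by invariance and closedness of the orbit closure, so for transitivity no further appeal to the uniqueness clause of Theorem \ref{main1}-(2) is required.
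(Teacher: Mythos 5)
Your proof is correct, and it is a genuine (if modest) streamlining of the paper's argument rather than a copy of it. Both proofs funnel transitivity through Theorem \ref{main2} and exploit the same mechanism---every $S\in\F^{[d+1]}$ has $S_\emptyset=\id$, so face transformations freeze the first coordinate---but the routing differs. The paper first uses symmetry of the definition to turn $(x,y)\in\RP^{[d]}$ into $(y,x)\in\RP^{[d]}$, so that both hypotheses become memberships in the \emph{same} orbit closure, $(y,x^{[d+1]}_*),\,(y,z^{[d+1]}_*)\in\overline{\F^{[d+1]}}(y^{[d+1]})$; it then invokes the minimality of $(\overline{\F^{[d+1]}}(y^{[d+1]}),\F^{[d+1]})$ (Theorem \ref{main1}-(1)) to conclude that $(y,z^{[d+1]}_*)$ lies in the $\F^{[d+1]}$-orbit closure of $(y,x^{[d+1]}_*)$, which produces face maps carrying $x$ to $z$ in every starred coordinate; these are finally applied to $x^{[d+1]}$. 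You avoid both the symmetrization and this second appeal to minimality: your sequence $G_k$ with $G_k y^{[d+1]}\to (y,z^{[d+1]}_*)$ comes straight from the definition of orbit closure, and applying it to $(x,y^{[d+1]}_*)$ requires only that $\overline{\F^{[d+1]}}(x^{[d+1]})$ is closed and $\F^{[d+1]}$-invariant. What your version buys is a cleaner accounting of where the deep input sits: inside the transitivity argument nothing beyond Theorem \ref{main2} is consumed, whereas the paper spends Theorem \ref{main1}-(1) a second time there; of course the minimality and uniqueness machinery is still what powers Theorem \ref{main2} itself, on which both arguments rest, so the overall logical cost is the same.
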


\begin{proof}
It suffices to show the transitivity, i.e. if $(x,y), (y,z)\in
\RP^{[d]}(X)$, then $(x,z)\in \RP^{[d]}(X)$. Since $(x,y), (y,z)\in
\RP^{[d]}(X)$, by Theorem \ref{main2} we have
$$(y, x,x,\ldots, x), (y, z,z,\ldots,z)\in
\overline{\F^{[d+1]}}(y^{[d+1]}).$$ By Theorem \ref{main1}
$(\overline{\F^{[d+1]}}(y^{[d+1]}),\F^{[d+1]})$ is minimal, it
follows that $(y, z,z,\ldots,z)\in
\overline{\F^{[d+1]}}(y,x,x,\ldots, x)$. It follows that $(x,
z,z,\ldots,z)\in \overline{\F^{[d+1]}}(x^{[d+1]})$. By Theorem
\ref{main2} again, $(x,z)\in \RP^{[d]}(X)$.
\end{proof}

\begin{rem}
By Theorem \ref{main2} we know that in the definition of regionally
proximal relation of $d$, $x'$ can be replaced by $x$. More
precisely, $(x,y)\in \RP^{[d]}$ if and only if for any $\d>0$ there
exist $y'\in X$ and a vector ${\bf n}= (n_1,\ldots, n_d)\in \Z^d$
such that for any nonempty $\ep\subset [d]$
$$\rho(y,y')<\d\ \text{and}\ \rho(T^{{\bf n}\cdot \ep}x,T^{{\bf n}\cdot \ep}y')<\d .$$
\end{rem}

\subsection{$\RP^{[d]}$ and nilfactors}

A subset $S\subset \Z$ is {\it thick} if it contains arbitrarily
long runs of positive integers, i.e.  there is a subsequence
$\{n_i\}$ of $\Z$ such that $S\supset \bigcup_{i=1}^\infty \{n_i,
n_i+1, \ldots, n_i+i\}$.

Let $\{ b_i \}_{i\in I}$ be a finite or infinite sequence in
$\mathbb{Z}$. One defines $$FS(\{ b_i \}_{i\in I})= \Big\{\sum_{i\in
\alpha} b_i: \alpha \text{ is a finite non-empty subset of } I\Big
\}$$ Note when $I=[d]$, $$FS(\{b_i\}_{i=1}^d)= \Big\{\sum_{i\in
I}b_i\ep_i: \ep=(\ep_i)\in \{0,1\}^d\setminus \{\emptyset\} \Big\}.
$$
$F$ is an {\it IP set} if it contains some
$FS({\{p_i\}_{i=1}^{\infty}})$, where $p_i\in\Z$.

\begin{lem}\label{prox}
Let $(X,T)$ be a system. Then for every $d\in \N$, the proximal
relation
$${\bf P}(X)\subseteq \RP^{[d]}(X).$$
\end{lem}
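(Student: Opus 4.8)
The plan is to produce, for each scale $\d>0$, a single vector ${\bf n}=(n_1,\ldots,n_d)\in\Z^d$ whose $2^d-1$ nonempty subset sums $\sum_{i\in\ep}n_i$ all witness proximality of $(x,y)$ to within $\d$; then, taking $x'=x$ and $y'=y$, the defining condition of $\RP^{[d]}$ is met. So the entire point is to extract from proximality a set of ``good times'' rich enough to contain a finite-sum set $FS(\{n_i\}_{i=1}^d)$, and this is exactly where idempotents enter.

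First I would pass to the enveloping semigroup, or equivalently let $\b\Z$ act on $X$ by $px=p\text{-}\lim_n T^nx$. Since $(x,y)$ is proximal there is some $p$ with $px=py$ (obtained as an ultrafilter limit along a sequence of times $n_i$ with $d(T^{n_i}x,T^{n_i}y)\to0$), so the set $A=\{p: px=py\}$ is nonempty. It is closed, because $p\mapsto(px,py)$ is continuous and the diagonal is closed, and it is a subsemigroup: if $qx=qy$ then $(pq)x=p(qx)=p(qy)=(pq)y$. By the Ellis--Numakura lemma a nonempty closed subsemigroup of a compact right-topological semigroup contains an idempotent $u=u^2$, and this $u$ satisfies $ux=uy=:z$.

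Next I would convert $ux=uy$ into a statement about good times. Fix $\d>0$. Both $\{n: d(T^nx,z)<\d/2\}$ and $\{n: d(T^ny,z)<\d/2\}$ belong to the ultrafilter $u$ (this is just the meaning of $ux=z$ and $uy=z$), hence so does their intersection, and by upward closure so does the larger set $B_\d=\{n: d(T^nx,T^ny)<\d\}$. Because $u$ is \emph{idempotent}, every member of $u$ contains a finite-sum set: by the Galvin--Glazer form of Hindman's theorem there exist $n_1,n_2,\ldots$ with $FS(\{n_i\}_{i\ge1})\subset B_\d$. Keeping the first $d$ of them and setting ${\bf n}=(n_1,\ldots,n_d)$, every nonempty $\ep\subset[d]$ gives ${\bf n}\cdot\ep\in FS(\{n_i\}_{i=1}^d)\subset B_\d$, i.e. $d(T^{{\bf n}\cdot\ep}x,T^{{\bf n}\cdot\ep}y)<\d$. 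As $\d>0$ is arbitrary this places $(x,y)$ in $\RP^{[d]}$, and since $d$ was arbitrary the inclusion ${\bf P}(X)\subseteq\RP^{[d]}(X)$ follows for all $d$.

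The main obstacle --- and the only genuinely nontrivial input --- is the passage from ``$B_\d$ is nonempty'' to ``$B_\d$ contains an IP set''. Bare proximality only yields $\inf_n d(T^nx,T^ny)=0$, which says nothing combinatorial about how the good times sit inside $\Z$; it is precisely the idempotent $u$ furnished by Ellis--Numakura, together with Hindman's theorem, that upgrades a single good time into the finite-sum structure $FS(\{n_i\}_{i=1}^d)$ needed to realize all $2^d-1$ coordinates at once. Everything else is routine: continuity of the $\b\Z$-action, the one-line semigroup computation for $A$, and the triangle inequality giving $B_\d\in u$. One could run the same argument with minimal idempotents inside the Ellis semigroup $E(X)$, but the $\b\Z$ formulation keeps the IP extraction most transparent.
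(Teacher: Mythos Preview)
Your argument is correct, but it is considerably heavier than the paper's. The paper observes directly that the set $N_\d(x,y)=\{n\in\Z:d(T^nx,T^ny)<\d\}$ is \emph{thick}: given any length $L$, uniform continuity of $T,\ldots,T^L$ yields $\eta>0$ so that $d(a,b)<\eta$ implies $d(T^ja,T^jb)<\d$ for $0\le j\le L$, and proximality supplies some $m$ with $d(T^mx,T^my)<\eta$, whence $\{m,m+1,\ldots,m+L\}\subset N_\d(x,y)$. Since every thick set contains an IP set by an elementary greedy construction, one extracts $FS(\{p_i\}_{i=1}^d)\subset N_\d(x,y)$ and finishes exactly as you do. No enveloping semigroup, no Ellis--Numakura, no Hindman.

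Your closing remark that ``bare proximality only yields $\inf_n d(T^nx,T^ny)=0$, which says nothing combinatorial'' is thus a bit misleading: proximality together with uniform continuity of the iterates already forces the good-time set to be thick, which is strong enough combinatorially without invoking idempotent ultrafilters. What your route buys is a conceptual explanation (members of idempotent ultrafilters are IP) that generalizes well to settings where uniform continuity is unavailable; what the paper's route buys is a two-line self-contained proof.
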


\begin{proof}
Let $(x,y)\in {\bf P}(X)$ and $\d>0$. Set $$N_\d(x,y)=\{n\in \Z:
\rho(T^nx,T^ny)<\d\}.$$ It is easy to check $N_\d(x,y)$ is thick and
hence an IP set. From this it follows that ${\bf P}(X)\subseteq
\RP^{[d]}(X)$. More precisely, set
$FS(\{p_i\}_{i=1}^\infty)\subseteq N_\d(x,y)$, then for any $d\in
\N$,
$$\rho(T^{p_1\ep_1+\ldots+p_d\ep_d}x,
T^{p_1\ep_1+\ldots+p_d\ep_d}y)<\d, \ \ep=(\ep_1,\ldots,\ep_d)\in
\{0,1\}^{d}, \ep\not=(0,\ldots,0).$$ That is, $(x,y)\in \RP^{[d]}$
for all $d\in \N$.
\end{proof}

The following corollary was observed in \cite{HM} for $d=2$.
\begin{cor}
Let $(X,T)$ be a minimal system and $d\in \N$. Then $(X,T)$ is a
weakly mixing system if and only if
$$\RP^{[d]}=X\times X.$$
\end{cor}

\begin{proof}
Since a minimal system $(X,T)$ is weakly mixing if and only if
$\overline{{\bf P}(X)}=\RP(X)=X\times X$ (see \cite{Au88}), so the
result follows from Lemma \ref{prox}.
\end{proof}

We remark that more interesting properties for weakly mixing systems
will be shown in Theorem \ref{wm-absolute} in the sequel.

\begin{prop}\label{diagonal}
Let $(X,T)$ be a minimal system and $d\in \N$. Then $\RP^{[d]}=\D$
if and only if $X$ is a system of order $d$.
\end{prop}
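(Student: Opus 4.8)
The plan is to prove both directions of the equivalence $\RP^{[d]} = \D$ iff $X$ is a system of order $d$, using the characterization from Theorem \ref{main2} together with the Host-Kra-Maass equivalences in Theorem \ref{HKM}. The key translation tool is Theorem \ref{main2}(2), which says that for a minimal system, $(x,y) \in \RP^{[d]}$ if and only if $(x, y, y, \ldots, y) = (x, y^{[d+1]}_*) \in \Q^{[d+1]}$. This lets me reformulate the condition $\RP^{[d]} = \D$ purely in terms of which degenerate cubes lie in $\Q^{[d+1]}$.

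For the direction assuming $X$ is a system of order $d$: by definition this means $(X,T)$ is a $d$-step nilsystem in the sense of Theorem \ref{HKM} with the dimension parameter equal to $d+1$. I would apply Theorem \ref{HKM}(2) to the system at level $d+1$: property (2) there states that if $(x,y,\ldots,y) \in \Q^{[d+1]}(X)$ then $x = y$. Now suppose $(x,y) \in \RP^{[d]}$. By Theorem \ref{main2} we get $(x, y^{[d+1]}_*) = (x,y,\ldots,y) \in \Q^{[d+1]}$, and then Theorem \ref{HKM}(2) forces $x = y$, so $\RP^{[d]} \subseteq \D$. Since $\RP^{[d]}$ is reflexive, $\RP^{[d]} = \D$.

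For the converse, assume $\RP^{[d]} = \D$. I want to verify property (2) of Theorem \ref{HKM} at level $d+1$, namely that $(x,y,\ldots,y) \in \Q^{[d+1]}$ implies $x = y$. So take any $x, y$ with $(x, y^{[d+1]}_*) \in \Q^{[d+1]}$. By Theorem \ref{main2} (the implication $(2) \Rightarrow (1)$), this gives $(x,y) \in \RP^{[d]} = \D$, hence $x = y$. This establishes property (2) of Theorem \ref{HKM} for the parameter $d+1$, and therefore $X$ is a system of order $d$ by the equivalence of the three properties there.

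The main thing to be careful about is the bookkeeping of the dimension index: Theorem \ref{HKM} is phrased for an integer denoted $d \ge 2$ there, with conclusion being a $(d-1)$-step nilsystem, whereas here a \emph{system of order $d$} corresponds to applying that theorem with its parameter set to $d+1$. I expect the only genuine subtlety to be making sure this shift is applied consistently and that Theorem \ref{main2} is invoked in the correct direction on each side. Both directions are otherwise immediate consequences of the two cited results, so there is no substantial computation; the work is entirely in correctly chaining the two characterizations.
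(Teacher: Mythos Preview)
Your proposal is correct and follows exactly the approach the paper intends: the paper's proof is the single line ``It follows from Theorem \ref{main2} and Theorem \ref{HKM} directly,'' and you have simply unpacked that citation, correctly handling the index shift (system of order $d$ $\leftrightarrow$ Theorem \ref{HKM} with parameter $d+1$) and invoking the equivalence $(x,y)\in\RP^{[d]}\Leftrightarrow (x,y^{[d+1]}_*)\in\Q^{[d+1]}$ from Theorem \ref{main2} in both directions.
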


\begin{proof} It follows from Theorem \ref{main2} and Theorem
\ref{HKM} directly.
\end{proof}

\subsection{Maximal nilfactors} Note that the lifting property of $\RP^{[d]}$
between two minimal systems is obtained in the paper. This result is
new even for minimal distal systems.

\begin{thm}\label{lifting}
Let $\pi: (X,T)\rightarrow (Y,T)$ be a factor map and $d\in \N$.
Then
\begin{enumerate}
  \item $\pi\times \pi (\RP^{[d]}(X))\subseteq \RP^{[d]}(Y)$;
  \item if $(X,T)$ is minimal, then $\pi\times \pi (\RP^{[d]}(X))=
  \RP^{[d]}(Y)$.
\end{enumerate}
\end{thm}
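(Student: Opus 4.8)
The plan is to prove the two statements in order, using the equivalent characterization of $\RP^{[d]}$ in terms of $\Q^{[d+1]}$ from Lemma \ref{RPd-def}, since the parallelepiped structure behaves well under factor maps.

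For part (1), I would take $(x,y)\in \RP^{[d]}(X)$ and use Lemma \ref{RPd-def} to find ${\bf a}_*\in X^{[d]}_*$ with $(x,{\bf a}_*,y,{\bf a}_*)\in \Q^{[d+1]}(X)$. The key observation is that $\pi^{[d+1]}$ maps $\Q^{[d+1]}(X)$ into $\Q^{[d+1]}(Y)$: indeed $\Q^{[d+1]}$ is the closure of the orbit of diagonal points under $\F^{[d+1]}$, and since $\pi$ intertwines $T$ with $T$, the map $\pi^{[d+1]}$ sends the generating parallelepipeds $Sx^{[d+1]}$ to $S(\pi x)^{[d+1]}$ and is continuous, so it respects the closures. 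Applying $\pi^{[d+1]}$ to $(x,{\bf a}_*,y,{\bf a}_*)$ gives $(\pi x,\pi^{[d]}{\bf a}_*,\pi y,\pi^{[d]}{\bf a}_*)\in \Q^{[d+1]}(Y)$, which by Lemma \ref{RPd-def} shows $(\pi x,\pi y)\in \RP^{[d]}(Y)$. This direction is essentially formal functoriality and should be routine.

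For part (2), part (1) already gives the inclusion $\pi\times\pi(\RP^{[d]}(X))\subseteq \RP^{[d]}(Y)$, so the work is the reverse inclusion. I would start with $(a,b)\in \RP^{[d]}(Y)$ and seek a preimage pair in $\RP^{[d]}(X)$. By Lemma \ref{RPd-def} there is ${\bf b}_*\in Y^{[d]}_*$ with $(a,{\bf b}_*,b,{\bf b}_*)\in \Q^{[d+1]}(Y)$. The strategy is to lift this parallelepiped through the surjection $\pi^{[d+1]}\colon \Q^{[d+1]}(X)\to \Q^{[d+1]}(Y)$; the surjectivity itself follows because $\pi^{[d+1]}$ is continuous and maps the dense set of generating parallelepipeds of $X$ onto the generating parallelepipeds of $Y$, and $\Q^{[d+1]}(X)$ is compact. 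So I can find a point ${\bf w}=(w_\ep)\in \Q^{[d+1]}(X)$ with $\pi^{[d+1]}{\bf w}=(a,{\bf b}_*,b,{\bf b}_*)$; in particular $\pi w_\emptyset=a$ and $\pi w_{\{d+1\}}=b$ (the two coordinates carrying $a$ and $b$). The obstacle is that the remaining coordinates of ${\bf w}$ need not be paired up in the form $(x,{\bf a}_*,y,{\bf a}_*)$ required by Lemma \ref{RPd-def}, i.e. the "left half" and "right half" of ${\bf w}$ need not agree off the distinguished coordinates.

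To overcome this I would exploit minimality of the face action from Theorem \ref{main1}, mirroring the argument in Theorem \ref{main2}. Writing ${\bf w}=({\bf w}',{\bf w}'')$ with ${\bf w}',{\bf w}''\in X^{[d]}$, I would use that $(a,{\bf b}_*)\in \Q^{[d]}(Y)$ lifts to a point of $\Q^{[d]}(X)$ and that by Theorem \ref{main1}-(2) the face group $\F^{[d]}$ acts with a unique minimal set in each fiber $\Q^{[d]}[\,\cdot\,]$; applying a suitable sequence $\{F_k\}\subset \F^{[d]}$ that drives one half of the configuration onto a diagonal point $x^{[d]}$ and tracking its effect via $F_k\times F_k\in \F^{[d+1]}$ on ${\bf w}$, I can collapse the auxiliary coordinates and produce a genuine configuration $(x,{\bf a}_*,y,{\bf a}_*)\in \Q^{[d+1]}(X)$ with $\pi x=a$, $\pi y=b$. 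The main difficulty, and the place where minimality is genuinely needed rather than mere functoriality, is ensuring that this collapsing procedure preserves the projection to $(a,b)$ under $\pi$ while simultaneously symmetrizing the two halves; here one must apply the face transformations within the fiber over $Y$ so that the $\pi$-images remain fixed at $(a,{\bf b}_*,b,{\bf b}_*)$, which is exactly what the uniqueness of the $\F^{[d]}$-minimal set in $\Q^{[d]}[x]$ from Theorem \ref{main1} guarantees. Once such a lift is found, Lemma \ref{RPd-def} yields $(x,y)\in \RP^{[d]}(X)$ with $\pi\times\pi(x,y)=(a,b)$, completing the reverse inclusion.
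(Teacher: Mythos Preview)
Your argument for part (1) is fine and matches the paper's ``follows from the definition''.

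For part (2) there is a genuine gap. You assert that one can apply face transformations ``within the fiber over $Y$ so that the $\pi$-images remain fixed at $(a,{\bf b}_*,b,{\bf b}_*)$'', and you attribute this to Theorem \ref{main1}. This is not true: face transformations are built from $T$ and are $\pi$-equivariant, not $\pi$-fiber-preserving. If $F\in\F^{[d+1]}$ and ${\bf w}\in X^{[d+1]}$ then $\pi^{[d+1]}(F{\bf w})=F(\pi^{[d+1]}{\bf w})$, which moves in $Y^{[d+1]}$. Theorem \ref{main1} only gives uniqueness of the $\F^{[d]}$-minimal set inside $\Q^{[d]}[x]$; it says nothing about fibers of $\pi$. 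So the step where you ``collapse the auxiliary coordinates'' while keeping $\pi(x)=a$ and $\pi(y)=b$ is unsupported, and in fact this is exactly the hard part of the proof.

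The paper's proof in Section \ref{section-lift} confronts precisely this loss of control. It first reduces (via Lemma \ref{proximal-lemma2} and Lemma \ref{prox}) to the case where $(y_1,y_2)$ is a $T\times T$-minimal point. It then performs an inductive coordinate-collapse using Proposition \ref{unique-minimal-set}, but at each stage the $Y$-projection drifts: one ends up with $(x_1,x_2)\in\RP^{[d]}(X)$ projecting to some $(y_1,y_{d+2})$ rather than $(y_1,y_2)$. The crucial additional bookkeeping is that at each step $(y_{j+1},y_1)\in\overline{\O((y_1,y_j),T\times T)}$, so that in the end $(y_1,y_{d+2})$ (or its flip) lies in $\overline{\O((y_1,y_2),T\times T)}$; minimality of $(y_1,y_2)$ then lets one pull a limit of $(T\times T)^{n_k}(x_1,x_2)$ back to a pair $(z_1,z_2)\in\RP^{[d]}(X)$ with $\pi\times\pi(z_1,z_2)=(y_1,y_2)$. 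Your outline is missing both the reduction to a minimal pair and this entire drift-and-return mechanism.
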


\begin{proof} (1) It follows from the definition.

(2) It will be proved in Section \ref{section-lift}.
\end{proof}

\begin{thm}\label{nil-factor}
Let $\pi: (X,T)\rightarrow (Y,T)$ be a factor map of minimal systems
and $d\in \N$. Then the following conditions are equivalent:
\begin{enumerate}
  \item $(Y,T)$ is a $d$-step nilsystem;
  \item $\RP^{[d]}(X)\subset R_\pi$.
\end{enumerate}
Especially the quotient of $X$ under $\RP^{[d]}(X)$ is the maximal
$d$-step nilfactor of $X$, i.e. any $d$-step nilfactor of $X$ is the
factor of $X/ \RP^{[d]}(X)$.
\end{thm}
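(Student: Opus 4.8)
The plan is to deduce this theorem formally from the lifting theorem (Theorem~\ref{lifting}) together with the characterization of systems of order $d$ among minimal systems furnished by Proposition~\ref{diagonal}. The key translation is that, for a minimal system, being a system of order $d$ is the same as $\RP^{[d]}=\D$; once this is in hand the equivalence $(1)\Leftrightarrow(2)$ becomes a short chase with the relation $R_\pi$, and the ``Especially'' clause follows by applying that equivalence to two particular factor maps. Since $\pi$ is a factor map of minimal systems, both halves of Theorem~\ref{lifting} are available.

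For $(1)\Rightarrow(2)$ I would argue as follows. Assuming $(Y,T)$ is a system of order $d$, Proposition~\ref{diagonal} gives $\RP^{[d]}(Y)=\D_Y$. Theorem~\ref{lifting}(1), which holds for an arbitrary factor map, then yields $\pi\times\pi(\RP^{[d]}(X))\subseteq \RP^{[d]}(Y)=\D_Y$, which says precisely that $\pi(x)=\pi(y)$ whenever $(x,y)\in\RP^{[d]}(X)$, i.e.\ $\RP^{[d]}(X)\subseteq R_\pi$. For $(2)\Rightarrow(1)$ I would use the harder half, Theorem~\ref{lifting}(2), which is where the minimality of $X$ enters: it gives $\RP^{[d]}(Y)=\pi\times\pi(\RP^{[d]}(X))$. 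If $\RP^{[d]}(X)\subseteq R_\pi$ then this image lies in $\D_Y$, while $\RP^{[d]}(Y)\supseteq\D_Y$ since $\RP^{[d]}$ is reflexive; hence $\RP^{[d]}(Y)=\D_Y$, and Proposition~\ref{diagonal} shows $(Y,T)$ is a system of order $d$.

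For the final assertion I would first note that $\RP^{[d]}(X)$ is a closed invariant equivalence relation: closedness and invariance were recorded when the relation was defined, and transitivity is Theorem~\ref{main3}. Hence the quotient map $\pi_d:X\to X/\RP^{[d]}(X)$ is a genuine factor of minimal systems with $R_{\pi_d}=\RP^{[d]}(X)$. Applying the equivalence just proved to $\pi_d$, condition $(2)$---namely $\RP^{[d]}(X)\subseteq R_{\pi_d}=\RP^{[d]}(X)$---holds trivially, so condition $(1)$ holds and $X/\RP^{[d]}(X)$ is a system of order $d$, hence a $d$-step nilfactor. For maximality, let $\psi:(X,T)\to (Z,T)$ be any $d$-step nilfactor, so $(Z,T)$ is a system of order $d$; applying the equivalence to $\psi$, condition $(1)$ gives condition $(2)$, namely $\RP^{[d]}(X)=R_{\pi_d}\subseteq R_\psi$. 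By the standard fact that a factor map $\psi$ descends through $\pi_d$ precisely when $R_{\pi_d}\subseteq R_\psi$, the map $\psi$ factors through $\pi_d$, exhibiting $Z$ as a factor of $X/\RP^{[d]}(X)$.

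The point to stress is that, at the level of this theorem, there is essentially no obstacle: everything is a formal manipulation of closed invariant equivalence relations and factor maps. All of the genuine difficulty has been pushed into Theorem~\ref{lifting}(2) (the surjectivity of $\pi\times\pi$ onto $\RP^{[d]}(Y)$ for minimal $X$), whose proof is deferred to Section~\ref{section-lift}; that is the only ingredient used here that is neither definitional nor already proved. The one place requiring a little care is making sure the quotient object is well defined before its universal property can even be phrased, which is exactly why the equivalence-relation statement Theorem~\ref{main3} must be invoked first.
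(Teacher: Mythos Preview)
Your proof is correct and follows essentially the same approach as the paper: both directions of the equivalence are derived from Proposition~\ref{diagonal} together with the two halves of Theorem~\ref{lifting}, with $(1)\Rightarrow(2)$ using only the easy inclusion and $(2)\Rightarrow(1)$ relying on the surjectivity statement from Theorem~\ref{lifting}(2). The only cosmetic difference is that the paper phrases $(2)\Rightarrow(1)$ as a proof by contradiction while you argue directly, and you spell out the ``Especially'' clause (invoking Theorem~\ref{main3} to justify the quotient) more fully than the paper does.
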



\begin{proof}
Assume that $(Y,T)$ is a $d$-step nilsystem. Then we have 
$\RP^{[d]}(Y)=\D_Y$ by Proposition \ref{diagonal}. Hence by Theorem
\ref{lifting}-(1),
$$\RP^{[d]}(X)\subset (\pi\times \pi)^{-1}(\D_Y)=R_\pi.$$

Conversely, assume that $\RP^{[d]}(X)\subset R_\pi$. If $(Y,T)$ is
not a $d$-step nilsystem, then by Proposition \ref{diagonal},
$\RP^{[d]}(Y)\neq \D_Y$. Let $(y_1, y_2)\in \RP^{[d]}\setminus
\D_Y$. Now by Theorem \ref{lifting}, there are $x_1,x_2\in X$ such
that $(x_1, x_2)\in \RP^{[d]}(X)$ with $(\pi\times
\pi)(x_1,x_2)=(y_1,y_2)$. Since $\pi(x_1)=y_1\neq y_2=\pi(x_2)$,
$(x_1,x_2)\not \in R_\pi$. This means that $\RP^{[d]}(X)\not \subset
R_\pi$, a contradiction! The proof is completed.
\end{proof}

\begin{rem}
In \cite[Proposition 4.5]{HKM} it is showed that this proposition
holds for minimal distal systems.
\end{rem}

\subsection{Weakly mixing systems} In this subsection we completely
determine $\Q^{[d]}$ and $\overline{\F^{[d]}}(x^{[d]})$ for minimal
weakly mixing systems.
\begin{thm}\label{wm-absolute}
Let $(X,T)$ be a minimal weakly mixing system and $d\ge 1$. Then
\begin{enumerate}
  \item $(\Q^{[d]}, \G^{[d]})$ is minimal and $\Q^{[d]}=X^{[d]}$;
  \item For all $x\in X$, $(\overline{\F^{[d]}}(x^{[d]}), \F^{[d]})$
  is minimal and $$\overline{\F^{[d]}}(x^{[d]})=\{x\}\times X^{[d]}_*=\{x\}\times
  X^{2^d-1}.$$
\end{enumerate}
\end{thm}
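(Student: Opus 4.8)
I would first reduce the whole statement to a single density assertion. The two minimality claims are already in hand: the minimality of $(\overline{\F^{[d]}}(x^{[d]}),\F^{[d]})$ is exactly Theorem \ref{main1}-(1), and the minimality of $(\Q^{[d]},\G^{[d]})$ was recorded earlier as known. Moreover the set equality in (1) follows from that in (2), because $\Q^{[d]}=\overline{\bigcup_{x\in X}\overline{\F^{[d]}}(x^{[d]})}$ and $\bigcup_{x\in X}\big(\{x\}\times X^{[d]}_*\big)=X^{[d]}$. So the entire theorem reduces to proving, for every $x\in X$, the equality $\overline{\F^{[d]}}(x^{[d]})=\{x\}\times X^{[d]}_*$; since $\F^{[d]}$ fixes the first coordinate and acts on $x^{[d]}$ with $\ep$-coordinate $T^{{\bf n}\cdot\ep}x$, this is the same as the density in $X^{2^d-1}$ of the orbit
$$O_d(x)=\big\{(T^{{\bf n}\cdot\ep}x:\emptyset\neq\ep\subseteq[d]):{\bf n}\in\Z^d\big\}.$$

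\textbf{Induction on $d$.} The base case $d=1$ is just minimality of $(X,T)$, which makes $O_1(x)=\{T^{n_1}x\}$ dense in $X$. For the inductive step I would split the coordinates of $X^{[d+1]}_*$ according to whether they contain $d+1$. Writing ${\bf n}'=(n_1,\dots,n_d)$ and $w=T^{n_{d+1}}x$, a point of $O_{d+1}(x)$ is
$$\Big((T^{{\bf n}'\cdot\ep}x:\emptyset\neq\ep\subseteq[d]),\ (T^{{\bf n}'\cdot\ep'}w:\ep'\subseteq[d])\Big),$$
i.e. the first block traces out $O_d(x)$ while the second block is the full $\F^{[d]}$-orbit of $w^{[d]}$, the two blocks being driven by the \emph{same} parameter ${\bf n}'$. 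By the inductive hypothesis each block is individually dense (the first in $X^{2^d-1}$, the second in $\{w\}\times X^{[d]}_*$ as $w$ ranges over the dense orbit of $x$). Thus the step reduces to showing that this diagonal $\F^{[d]}$-orbit, taken over all $w$, is dense in the product $X^{2^d-1}\times X^{[d]}$.

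\textbf{The main obstacle and how to break it.} The crux is precisely the decoupling of the shared parameter ${\bf n}'$: density of each block separately does not give joint density, and this is exactly where weak mixing must enter (for a distal or equicontinuous system the two blocks are rigidly linked, and $\Q^{[d]}$ is a proper subset). To break it I would seed the argument using the earlier results: by the Corollary, $\RP^{[d]}(X)=X\times X$ for weakly mixing minimal systems, so Theorem \ref{main2} places every partial diagonal $(x,y,\dots,y)$ inside $M:=\overline{\F^{[d+1]}}(x^{[d+1]})$, for all $y\in X$. The set $M$ is $\F^{[d+1]}$-minimal by Theorem \ref{main1}-(1), and it is invariant under the Euclidean permutations (these normalize $\F^{[d+1]}$ and fix $x^{[d+1]}$); applying face transformations and these symmetries to the partial diagonals lets one progressively split the coordinates into larger independent blocks. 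The remaining synchronization at each splitting stage I would resolve by the weak-mixing input in the form of Furstenberg's theorem that all finite powers $(X^r,T\times\cdots\times T)$ of a weakly mixing minimal system are topologically transitive (equivalently, via the density of proximal pairs used in Lemma \ref{prox}), together with the minimality of $(\Q^{[d]},\G^{[d]})$, which supplies transitivity over \emph{all} base points rather than a single $x$. I expect this decoupling step — converting ``each block dense'' into ``jointly dense'' against a fixed base point — to be the genuinely hard part of the proof; the reduction and the induction scaffolding around it are routine.
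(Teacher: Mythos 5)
Your reduction of the theorem to the density statement $\overline{\F^{[d]}}(x^{[d]})=\{x\}\times X^{[d]}_*$ and your splitting of the coordinates of $X^{[d+1]}_*$ into the two blocks are fine, but the proposal has a genuine gap, and it is exactly the one you flag yourself: the ``decoupling step'' is the entire content of the theorem, and you do not prove it. What you offer in its place is a list of ingredients ($\RP^{[d]}(X)=X\times X$, Theorem \ref{main2}, Euclidean permutations, Furstenberg's transitivity of powers, minimality of $(\Q^{[d]},\G^{[d]})$) plus the phrase ``progressively split the coordinates into larger independent blocks,'' with no actual mechanism. Worse, the starting points you propose cannot generate anything new by the moves you allow: each partial diagonal $(x,y,\ldots,y)$ lies in $M:=\overline{\F^{[d+1]}}(x^{[d+1]})$, and since $M$ is $\F^{[d+1]}$-minimal, the orbit closure of each such point is $M$ itself; hence the closure of the $\F^{[d+1]}$-saturation of the whole family $\{(x,y,\ldots,y):y\in X\}$ is again exactly $M$. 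So applying faces and cube symmetries to these points and closing up yields only the tautology $M\supseteq M$ and no new lower bound on $M$; to show $M\supseteq\{x\}\times X^{[d+1]}_*$ one must exhibit explicit limits, which is precisely what is missing.

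It is worth seeing how the paper does this, because the mechanism is simpler than what you envisage and uses none of Theorem \ref{main1}, Theorem \ref{main2}, or $\RP^{[d]}$. One proves by induction on $d$ the \emph{stronger} statement that for every point ${\bf x}\in X^{[d]}$ with $x_\emptyset=x$ (not just the diagonal point, as in your setup) one has $\overline{\F^{[d]}}({\bf x})=\{x\}\times X^{[d]}_*$. Write ${\bf x}=({\bf x}',{\bf x}'')$. The key observation is that the projection of the $\F^{[d]}$-action to the second block is the full parallelepiped action $\G^{[d-1]}$: the faces $T^{[d]}_j$, $j<d$, project to faces, while $T^{[d]}_d$ projects to the diagonal $T^{[d-1]}$. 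The induction hypothesis makes $(X^{[d-1]},\G^{[d-1]})$ minimal, so the second block can be steered to an arbitrary target, which one chooses to be a transitive point ${\bf a}$ of $(X^{[d-1]},T^{[d-1]})$ --- this is the only place weak mixing is used, via Furstenberg's theorem. Passing to a subsequence, the first block converges to some ${\bf w}$ with $w_\emptyset=x$, so $({\bf w},{\bf a})\in\overline{\F^{[d]}}({\bf x})$. Now the decoupling is immediate: iterating the single face transformation $T^{[d]}_d=\id^{[d-1]}\times T^{[d-1]}$ freezes the first block and sweeps the second along the dense $T^{[d-1]}$-orbit of ${\bf a}$, giving $\{{\bf w}\}\times X^{[d-1]}\subseteq\overline{\F^{[d]}}({\bf x})$; then acting by $F\times F$ with $F\in\F^{[d-1]}$ and applying the induction hypothesis to ${\bf w}$ (this is why the hypothesis must cover arbitrary points with first coordinate $x$, not only the orbit of $x^{[d]}$) yields $\{x\}\times X^{[d-1]}_*\times X^{[d-1]}=\{x\}\times X^{[d]}_*\subseteq\overline{\F^{[d]}}({\bf x})$. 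In other words, the synchronization is broken not by comparing two separately dense blocks, but by first parking one block at a transitive point of the diagonal action and then using the one face transformation that moves only that block. Both minimality claims of the theorem then fall out of this equality for free, so the paper's proof is self-contained, whereas your route imports the heavy machinery behind Theorems \ref{main1} and \ref{main2} (structure theorem, enveloping semigroups) and still leaves the core step unproved.
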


\begin{proof}
The fact that $(\Q^{[d]}, \G^{[d]})$ is minimal and
$\Q^{[d]}=X^{[d]}$ is followed from (2) easily. Hence it suffices to
show (2).

We will show for any point of ${\bf x}\in X^{[d]}$ with $x_\emptyset
=x$, we have $$\overline{\F^{[d]}}({\bf x})=\{x\}\times X^{[d]}_*,$$
which obviously implies (2). First note that it is trivial for
$d=1$. Now we assume that (1), and hence (2) hold for $d-1$, $d\ge
2$.

Let ${\bf x}=({\bf x'},{\bf x''})\in \Q^{[d]}$. Since $(X,T)$ is
weakly mixing, $(X^{[d-1]}, T^{[d-1]})$ is transitive (see
\cite{F67}). Let ${\bf a}\in X^{[d-1]}$ be a transitive point. By
the induction for $d-1$, $\Q^{[d-1]}=X^{[d-1]}$ is
$\G^{[d]}$-minimal. Hence ${\bf a}\in \overline{\O({\bf x''},
\G^{[d-1]})}$ and there is some sequence $F_k\in \F^{[d]}$ and ${\bf
w}\in X^{[d-1]}$ such that
$$F_k{\bf x}=F_k({\bf x'},{\bf x''})\to ({\bf w}, {\bf a}), \ k\to \infty. $$
Especially $({\bf w},{\bf a})\in \overline{\F^{[d]}}({\bf x})$. Note
that $$(T^{[d]}_d)^n({\bf w},{\bf a})=({\bf w},(T^{[d-1]})^n{\bf
a})\in \overline{\F^{[d]}}({\bf x}).$$ We have $$\{{\bf w}\}\times
\O({\bf a}, T^{[d-1]})\subset \overline{\F^{[d]}}({\bf x})$$ Since
${\bf a}$ is a transitive point of $(X^{[d-1]}, T^{[d-1]})$, we have
\begin{equation}\label{d1}
\{{\bf w}\}\times X^{[d-1]}=\{{\bf w}\}\times \overline{\O({\bf a},
T^{[d-1]})}\subset \overline{\F^{[d]}}({\bf x}).
\end{equation}

By the induction assumption for $d-1$, ${\bf w}$ is minimal for
$\F^{[d-1]}$ action and
\begin{equation}\label{d2}
    \overline{\F^{[d-1]}}({\bf w})=\overline{\O({\bf w},
\F^{[d-1]})}= \{x\}\times X^{[d-1]}_*.
\end{equation}
By acting the elements of $\F^{[d]}$ on (\ref{d1}), we have
\begin{equation}\label{d3}
    \O({\bf w}, \F^{[d-1]}) \times X^{[d-1]}\subset \overline{\F^{[d]}}({\bf x}).
\end{equation}
By (\ref{d2}) and (\ref{d3}), we have
$$\{x\}\times X^{[d-1]}_*\times X^{[d-1]}=\{x\}\times X^{[d]}_*\subset \overline{\F^{[d]}}({\bf x}).$$
This completes the proof.
\end{proof}

\begin{rem} We remark that using the so-called natural extension, it can be
shown that the main results of the paper hold for continuous
surjective maps.
\end{rem}

\section{$\F^{[d]}$-minimal sets in $\Q^{[d]}$}\label{section-F-minimal}

In this section we discuss $\F^{[d]}$-minimal sets in $\Q^{[d]}$ and
prove Theorem \ref{main1}-(1). First we will discuss proximal
extensions, distal extensions and weakly mixing extension one by
one. They exhibit different properties and satisfy our requests by
different reasons. After that, the proof of Theorem \ref{main1}-(1)
will be given. The proof of Theorem \ref{main1}-(2) will be given in
next section. For notions which are not mentioned before see
Appendix \ref{section-tds}.

\subsection{Idea of the proof of Theorem \ref{main1}-(1)}

Before going on let us say something about the idea in the proof of
Theorem \ref{main1}-(1). By the structure theorem \ref{structure},
for a minimal system $(X,T)$, we have the following diagram.

\[
\begin{CD}
X_\infty @>{\pi}>> X\\
      @VV{\phi}V\\
Y_\infty
\end{CD}
\]

\medskip

In this diagram $Y_\infty$ is a strictly PI system, $\phi$ is weakly
mixing and RIC, and $\pi$ is proximal.

So if we want to show that $(\overline
{\F^{[d]}}(x^{[d]}),\F^{[d]})$ is minimal for all $x\in X$, it is
sufficient to show it holds for $X_\infty$. By the definition of
$X_\infty$ and $Y_\infty$, it is sufficient to consider the
following cases: (1) proximal extensions; (2) distal or
equicontinuous extensions; (3) RIC weakly mixing extensions and (4)
the inverse limit. Since the inverse limit is easy to handle, we
need only focus on the three kinds of extensions.

\subsection{Properties about three kinds of extensions}

In this subsection we collect some properties about proximal, distal
and weakly mixing extensions, which will be used frequently in the
sequel. As in Appendix \ref{section-tds}, $(X,\t)$ is a system under
the action of a topological group $\t$, and $E(X,\t)$ is its
enveloping semigroup.

The following two lemmas are folk results, for completeness we
include proofs.

\begin{lem}\label{proximal-lemma}
Let $\pi: (X,\t)\rightarrow (Y,\t)$ be a proximal extension of
minimal systems. Let $x\in X, y=\pi(x)$ and let $x_1,x_2,\ldots,
x_n\in \pi^{-1}(y)$. Then there is some $p\in E(X,\t)$ such that
$$px_1=px_2=\ldots=px_n=x.$$
Especially, when $x=x_1$, we have that $(x_1, x_2, \ldots, x_n)$ is
proximal to $(x, x, \ldots , x)$ in $(X^n, \t)$.
\end{lem}

\begin{proof}
Since $(x_1, x_2)\in R_\pi\subset {\bf P}(X,\t)$, by Proposition
\ref{proximal-Ellis} there is some $p\in E(X,\t)$ such that
$px_1=px_2$.

Now assume that for $2\le j\le n-1$, there is some $p_1\in E(X,\t)$
such that $p_1x_1=p_1x_2=\ldots =p_1x_j$. Since $R_\pi$ is closed
and invariant and $(x_j,x_{j+1})\in R_\pi$, $(p_1x_j, p_1x_{j+1})\in
R_\pi \subset {\bf P}(X,\t)$. So by Proposition \ref{proximal-Ellis}
there is $p_2\in E(X,\t)$ such that $p_2(p_1x_j)=p_2(p_1x_{j+1})$.
Let $p=p_2p_1$, then we have
$$px_1=px_2=\ldots =px_j=px_{j+1}.$$
Inductively, there is some $p\in E(X,\t)$ such that
$$px_1=px_2=\ldots=px_n .$$
Since $(X,\t)$ is minimal, we can assume that they are equal to $x$.

If $x_1=x$, then $px_1=px_2=\ldots=px_n=x=x_1$ and hence
$$p(x_1, x_2, \ldots, x_n)=(x, x, \ldots,x)=p(x, x, \ldots, x).$$
That is, $(x_1, x_2, \ldots, x_n)$ is proximal to $(x, x, \ldots ,
x)$ in $(X^n, \t)$.
\end{proof}

\begin{lem}\label{distal-lemma1}
Let $\pi: (X,\t)\rightarrow (Y,\t)$ be a distal extension of
systems. Then for any $x\in X$, if $\pi(x)$ is minimal in $(Y,\t)$,
then $x$ is minimal in $(X,\t)$. Especially, if $(Y,\t)$ is
semi-simple (i.e. every point is minimal), then so is $(X,\t)$.
\end{lem}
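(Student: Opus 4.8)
The plan is to work inside the enveloping semigroup $E=E(X,\t)$ and to exploit the characterization of minimal points by idempotents. Recall (from Appendix \ref{section-tds}) that $E$ is a compact right-topological semigroup possessing minimal left ideals, that every minimal left ideal contains idempotents, that a point $z$ is minimal if and only if $uz=z$ for some idempotent $u$ lying in a minimal left ideal, and that $uz$ is itself a minimal point for every such $u$ and every $z$. Recall also that the factor map $\pi$ induces a homomorphism $E(X,\t)\to E(Y,\t)$ so that $\pi(pz)=p\pi(z)$ for all $p\in E$ and $z\in X$, and that $\pi$ being a distal extension means precisely $R_\pi\cap {\bf P}(X,\t)=\D_X$.

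The first step is to manufacture a suitable idempotent. Put $y=\pi(x)$. Since $y$ is minimal in $(Y,\t)$, the Ellis characterization supplies a minimal left ideal $L\subseteq E$ and an idempotent $u\in L$ with $uy=y$. Then $\pi(ux)=u\pi(x)=uy=y=\pi(x)$, so $(x,ux)\in R_\pi$, and because $u$ is an idempotent in a minimal left ideal, $ux$ is a minimal point of $(X,\t)$.

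Next I would verify that $x$ and $ux$ are proximal. Since $u^2=u$, we have $u(ux)=u^2x=ux=u(x)$, so the single element $u\in E$ carries both $x$ and $ux$ to the common image $ux$; by Proposition \ref{proximal-Ellis} this yields $(x,ux)\in {\bf P}(X,\t)$. Hence $(x,ux)\in R_\pi\cap {\bf P}(X,\t)$, and the distality of $\pi$ forces $x=ux$. As $ux=x$ with $u$ an idempotent in a minimal left ideal, $x$ is minimal, which proves the first assertion. The final (``Especially'') assertion is then immediate: if every point of $Y$ is minimal, then $\pi(x)$ is minimal for each $x\in X$, whence each $x$ is minimal and $(X,\t)$ is semi-simple.

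The only real content lies in the first step — producing an idempotent of $E(X,\t)$ in a minimal left ideal that fixes $y=\pi(x)$ — which rests on the standard fact that a point is minimal exactly when it is fixed by some idempotent of each minimal left ideal. Everything afterward is a short formal computation: the idempotent relation $u^2=u$ gives proximality, the homomorphism property $\pi(ux)=u\pi(x)$ places the pair in $R_\pi$, and distality collapses the two points to one.
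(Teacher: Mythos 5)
Your proof is correct and follows essentially the same route as the paper: take a minimal idempotent $u\in E(X,\t)$ fixing $y=\pi(x)$ (via the induced action on $Y$), note $(x,ux)\in R_\pi\cap{\bf P}(X,\t)$, use distality of $\pi$ to conclude $ux=x$, and invoke Proposition \ref{minimal-point}. The only cosmetic difference is that you re-derive the proximality of $(x,ux)$ from the characterization $px_1=px_2$, while the paper cites it directly as Proposition \ref{proximal-Ellis}(2).
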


\begin{proof}
Let $x\in X$ and $y=\pi(x)$. Since $y$ is a minimal point, by
Proposition \ref{minimal-point} there is some minimal idempotent
$u\in E(X,\t)$ such that $uy=y$. Then $\pi(ux)=u\pi(x)=uy=y$. Hence
$ux,x\in \pi^{-1}(y)$. Since $(ux,x)\in {\bf P}(X,\t )$ (Proposition
\ref{proximal-Ellis}) and $\pi$ is distal, we have $ux=x$. That is,
$x$ is a minimal point of $X$ by Proposition \ref{minimal-point}.
\end{proof}

Now we discuss weakly mixing extensions. We need Theorem
\ref{wm-extension}, which is a generalization of \cite[Chapter 14,
Theorem 28]{Au88}. Note that in \cite[Theorem 2.7 and Corollary
2.9]{Gl05} Glasner showed that $R_\pi^n$ is transitive. So Theorem
\ref{wm-extension} is a slight strengthening of the results in
\cite{Gl05}. Since its proof needs some techniques in the enveloping
semigroup theory, we leave it to the appendix.

\begin{thm}\label{wm-extension}
Let $\pi : (X, \t)\rightarrow (Y, \t)$ be a RIC weakly mixing
extension of minimal systems, then for all $n\ge 1$ and $y\in Y$,
there exists a transitive point $(x_1,x_2,\ldots,x_n)$ of $R^n_\pi$
with $x_1, x_2, \ldots , x_n\in \pi^{-1}(y)$.
\end{thm}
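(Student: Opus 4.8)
The plan is to argue entirely inside the enveloping semigroup $E(X,\t)$, fixing once and for all a minimal left ideal $M\subseteq E(X,\t)$ and the set $J$ of minimal idempotents in $M$. There are two independent things to establish: first, that $(R_\pi^n,\t)$ is topologically transitive (this is essentially Glasner's conclusion in \cite{Gl05}); and second, the genuinely new refinement, that a transitive point can be produced whose coordinates all lie in $\pi^{-1}(y)$ for an \emph{arbitrary} prescribed $y\in Y$. Both are handled by induction on $n$, exploiting that $R_\pi^n$ is an iterated fibre product over $\pi$.

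For transitivity I would induct on $n$, carrying along the statement that the iterated fibre product $R_\pi^n\to Y$ is itself a RIC weakly mixing extension. The inductive mechanism is that $R_\pi^n=X\times_Y R_\pi^{n-1}$, so the first-coordinate projection $\rho:R_\pi^n\to X$ is the pullback of $R_\pi^{n-1}\to Y$ along $\pi$; since pulling a RIC weakly mixing extension back along a factor map preserves both properties, $\rho$ is RIC and weakly mixing over the minimal base $X$. A weakly mixing extension has transitive total space---its fibre square is transitive by the definition of weak mixing, and projecting to the first coordinate sends a transitive point of the square to a transitive point of $R_\pi^n$---so $R_\pi^n$ is transitive, completing the induction (base case $R_\pi^1=X$, minimal).

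For the prescribed fibre I would first record that a RIC extension is open, so $\Pi:R_\pi^n\to Y$ is an open factor map; a Kuratowski--Ulam argument for open maps then shows that the dense $G_\delta$ set of transitive points of $R_\pi^n$ meets $\Pi^{-1}(y)$ in a dense $G_\delta$ of the fibre for a residual set of $y$. To upgrade ``residual $y$'' to ``every $y$'' I would invoke the full strength of RIC: for any $y$ choose, by Proposition \ref{minimal-point}, a minimal idempotent $u\in J$ with $uy=y$, and use the RIC description of $\pi^{-1}(y)$ as the image under $u$ of a subset of $M$ attached to a base point $x_0\in\pi^{-1}(y)$. This description is uniform in $y$, the only ingredients being $u$ and the fixed ideal $M$, so the fibres $\pi^{-1}(y)$ are mutually homogeneous and the existence of a fibrewise transitive point cannot depend on $y$; combined with Lemma \ref{proximal-lemma} to align base points inside a common fibre, this forces a transitive point over every $y$.

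The main obstacle is precisely this last upgrade. The naive moves all fail: one cannot transport a transitive point $\mathbf{x}^0$ lying over some $y_0$ into the fibre over a target $y$ by applying $t\in\t$ or $p\in E(X,\t)$ with $ty_0=y$ or $py_0=y$, because a group element preserves transitivity only when it acts invertibly, while every $p\in M$---in particular every idempotent---collapses a transitive point to a \emph{minimal} one; likewise a limit of transitive points taken along $t_iy_0\to y$ need not be transitive. Thus the prescribed-fibre statement must be proved by exhibiting transitive points inside each fibre directly, and it is the RIC structure, not merely openness, that renders the fibres homogeneous enough for this. Because this is exactly the point at which the enveloping-semigroup machinery becomes unavoidable, the detailed argument is deferred to Appendix \ref{section-tds}.
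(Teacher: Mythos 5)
Your proposal does not actually prove the theorem: both halves rest on steps that are asserted rather than established, and the second one is precisely the new content of the result. For the transitivity of $R_\pi^n$, your induction carries the statement that $R_\pi^{n}\to Y$ is a RIC weakly mixing extension and invokes the claim that both properties are preserved under pullback along a factor map. That claim is nowhere proved, and in the paper's framework it is not even well-posed: RIC is defined (via the Ellis group and the circle operation, $\pi^{-1}(py_0)=p\c \mathfrak{G}(Y,y_0)x_0$) only for extensions of \emph{minimal} systems, whereas $R_\pi^{n-1}$ is transitive but not minimal, so ``$R_\pi^{n-1}\to Y$ is RIC'' has no meaning here; and preservation of relative weak mixing under base change is itself a statement of the same depth as the theorem being proved. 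This half is salvageable only by citing Glasner \cite[Theorem 2.7 and Corollary 2.9]{Gl05} directly, which is what the paper does when it remarks that the theorem is a slight strengthening of those results.

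The fatal gap is in the second half. Kuratowski--Ulam for the open map $\Pi:R_\pi^n\to Y$ can at best give a fibrewise transitive point over a \emph{residual} set of $y\in Y$, and your upgrade to \emph{every} $y$ is an appeal to ``mutual homogeneity'' of the fibres through the RIC description. But, as you yourself observe in your last paragraph, the maps implementing that homogeneity (elements $p$ of a minimal left ideal, acting by $p$ or by $p\,\c$) collapse transitive points to minimal ones, so homogeneity of the fibres \emph{as sets} does not transport fibrewise transitivity from one fibre to another; no argument is offered to replace this, and you then defer ``the detailed argument'' to an appendix that is never supplied --- that deferral is exactly the missing proof. The paper closes this gap by never transporting anything: its key lemma (Lemma \ref{str-thm4-12}) works inside the prescribed fibre from the start, using the Ellis trick (Lemma \ref{str-lem4-10}), the weak-mixing identity $F=H(F)A$ for $F=\mathfrak{G}(Y,y)$ and $A=\mathfrak{G}(X,x)$, and the RIC identity $u\c Fx=\pi^{-1}(y)$, to show that $\overline{\t(\{x'\}\times U)}=R_\pi^n$ for every nonempty open $U\subseteq\overline{u\pi^{-1}(y)}$ and every transitive point $x'$ of $R_\pi^{n-1}$ lying over $y$. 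A Baire category argument inside $\overline{u\pi^{-1}(y)}$ (the open dense sets $V_\ep$ of points $x$ for which $\t(x',x)$ is $\ep$-dense in $R_\pi^n$) then produces, by induction on $n$, a transitive point all of whose coordinates lie in $\pi^{-1}(y)$. Any correct completion of your plan would have to reproduce an argument of this kind within each fibre, so the proposal as written has a genuine hole rather than a stylistic shortcut.
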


Note that each RIC extension is open, and $\pi: X\rightarrow Y$ is
open if and only if $Y\rightarrow 2^X, y\mapsto \pi^{-1}(y)$ is
continuous, see for instance \cite{Vr}. Using Theorem
\ref{wm-extension} we have the following lemma, which will be used
in the sequel.

\begin{lem}\label{wm-lemma}
Let $\pi :(X,T)\rightarrow (Y,T)$ be a RIC weakly mixing extension
of minimal systems of $(X,T)$ and $(Y,T)$. Then for each $y\in Y$
and $d\ge 1$, we have
\begin{enumerate}
  \item
  $\left(\pi^{-1}(y)\right)^{[d]}=\left(\pi^{-1}(y)\right)^{2^d}\subset
  \Q^{[d]}(X)$,
  \item for all ${\bf x}\in X^{[d]}$ with $x_\emptyset =x$ and
$\pi^{[d]}({\bf x})=y^{[d]}$
$$\{x\}\times
\left(\pi^{-1}(y)\right)^{[d]}_*=\{x\}\times
\left(\pi^{-1}(y)\right)^{2^d-1}\subset \overline{\F^{[d]}}({\bf
x}).$$
\end{enumerate}
\end{lem}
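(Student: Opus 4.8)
The plan is to prove the two assertions together by induction on $d$, treating (2) as the core statement and deducing (1) from it at each level. The model for the argument is the proof of Theorem \ref{wm-absolute}, with the full space $X^{[d-1]}$ systematically replaced by the fiber product $R^{2^{d-1}}_\pi$; the weak mixing of the extension enters precisely through the transitive points supplied by Theorem \ref{wm-extension}, in place of the transitivity of $X^{[d-1]}$ used in the absolute case. First, (2)$\Rightarrow$(1) at a fixed $d$ is immediate: the diagonal point $x^{[d]}$ lies in $\D^{[d]}\subseteq\Q^{[d]}$ and in $(\pi^{-1}(y))^{2^d}$ whenever $x\in\pi^{-1}(y)$, so applying (2) to ${\bf x}=x^{[d]}$ and using that $\Q^{[d]}$ is closed and $\F^{[d]}$-invariant gives $\{x\}\times(\pi^{-1}(y))^{2^d-1}\subseteq\overline{\F^{[d]}}(x^{[d]})\subseteq\Q^{[d]}$; the union over $x\in\pi^{-1}(y)$ then yields (1). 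Thus it suffices to prove (2). For $d=1$ this is clear, since $\overline{\F^{[1]}}({\bf x})=\{x\}\times X\supseteq\{x\}\times\pi^{-1}(y)$ by minimality.

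For the inductive step, assume both statements for $d-1$. Write ${\bf x}=({\bf x}',{\bf x}'')$ with ${\bf x}',{\bf x}''\in(\pi^{-1}(y))^{2^{d-1}}$ and $x'_\emptyset=x$. By the induction hypothesis (1) for $d-1$, every fiber product $R^{2^{d-1}}_\pi$ sits inside $\Q^{[d-1]}$, and recall that $(\Q^{[d-1]},\G^{[d-1]})$ is minimal. By Theorem \ref{wm-extension} (with $n=2^{d-1}$) I choose ${\bf a}\in(\pi^{-1}(y))^{2^{d-1}}$ whose $T^{[d-1]}$-orbit is dense in $R^{2^{d-1}}_\pi$. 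Since restricting the $\F^{[d]}$-action to the second half realizes exactly the $\G^{[d-1]}$-action, and ${\bf x}''\in\Q^{[d-1]}$, minimality produces a sequence in $\F^{[d]}$ carrying the second half to ${\bf a}$; passing to a subsequence the first half converges to some ${\bf w}\in\Q^{[d-1]}$ with $w_\emptyset=x$, so $({\bf w},{\bf a})\in\overline{\F^{[d]}}({\bf x})$. Applying powers of $T^{[d]}_d=\mathrm{id}\times T^{[d-1]}$ and taking the closure then fills the second half:
\[
\{{\bf w}\}\times R^{2^{d-1}}_\pi\subseteq\overline{\F^{[d]}}({\bf x}).
\]

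It remains to sweep the first half so as to cover $\{x\}\times(\pi^{-1}(y))^{2^d-1}$, and this is the step I expect to be the main obstacle. Acting by the subgroup of $\F^{[d]}$ generated by $T^{[d]}_1,\dots,T^{[d]}_{d-1}$ moves both halves simultaneously by the same element of $\F^{[d-1]}$, so one would like to invoke the induction hypothesis (2) for ${\bf w}$ to steer the first half to an arbitrary ${\bf z}'\in(\pi^{-1}(y))^{2^{d-1}}$ with $z'_\emptyset=x$. The difficulty is that ${\bf w}$ need not lie over the diagonal: its image ${\bf v}=\pi^{[d-1]}({\bf w})$ is only known to lie in $\overline{\F^{[d-1]}}(y^{[d-1]})$, so statement (2) at level $d-1$ does not apply to ${\bf w}$ verbatim. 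To overcome this I would use the minimality of $(\overline{\F^{[d-1]}}(y^{[d-1]}),\F^{[d-1]})$ — that is, Theorem \ref{main1}-(1) at the lower level $d-1$, which is available within the induction — to drive the base point ${\bf v}$ back to the diagonal $y^{[d-1]}$, and combine this with the fullness of the fiber furnished by Theorem \ref{wm-extension}, so that along a single sequence in $\F^{[d]}$ the first half approaches the prescribed ${\bf z}'$ while the second half remains free to approach any prescribed ${\bf z}''\in(\pi^{-1}(y))^{2^{d-1}}$. Reconciling the coupling of the two halves with the drift of the base point off the diagonal is the technical heart of the proof; once this simultaneous approximation is carried out, one obtains $\{x\}\times(\pi^{-1}(y))^{2^d-1}\subseteq\overline{\F^{[d]}}({\bf x})$, which establishes (2) at level $d$ and, by the reduction above, (1) as well, completing the induction.
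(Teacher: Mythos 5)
Your overall architecture — induction on $d$, with (2) as the core statement, (1) deduced by taking ${\bf x}=x^{[d]}$ and unioning over $x\in\pi^{-1}(y)$, the transitive point ${\bf a}$ supplied by Theorem \ref{wm-extension}, and the powers of $T^{[d]}_d$ filling $\{{\bf w}\}\times R^{2^{d-1}}_\pi$ — is the same as the paper's. But there is a genuine gap, and it is created by a choice you make early on: you steer the second half ${\bf x''}$ to ${\bf a}$ using minimality of $(\Q^{[d-1]},\G^{[d-1]})$, i.e.\ by elements of $\G^{[d-1]}$ that contain a diagonal component $(T^{[d-1]})^{n_k}$. That diagonal part lifts to $T^{[d]}_d$ and acts only on the second half, so if the $\F^{[d-1]}$-part has $\ep$-th coordinate $T^{m^k_\ep}$, the second half's $\ep$-coordinate projects to $T^{m^k_\ep+n_k}y\to y$, which gives no information about the first half's projection $T^{m^k_\ep}y$. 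Hence you cannot conclude $\pi^{[d-1]}({\bf w})=y^{[d-1]}$ — this is exactly the ``drift off the diagonal'' you then struggle with. The paper never incurs this problem: it first normalizes, WLOG, $a_\emptyset=x''_\emptyset$ (using $T^{[d]}_d$), so that ${\bf a}\in\{x''_\emptyset\}\times\left(\pi^{-1}(y)\right)^{2^{d-1}-1}\subset\overline{\F^{[d-1]}}({\bf x''})$ by the induction hypothesis (2) applied to ${\bf x''}$, and then steers with elements $F_k\in\F^{[d-1]}$ only. Since $F_k\times F_k$ moves both halves by the same powers of $T$ and the second half converges to a point over $y^{[d-1]}$, the first half automatically converges to a ${\bf w}$ with $\pi^{[d-1]}({\bf w})=y^{[d-1]}$ and $w_\emptyset=x$, so the induction hypothesis applies to ${\bf w}$ verbatim and the obstacle you flag never arises.

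Your proposed repair is, moreover, both circular and missing the key mechanism. Circular: Theorem \ref{main1}-(1), which would give minimality of $(\overline{\F^{[d-1]}}(y^{[d-1]}),\F^{[d-1]})$ for $Y$, is proved in the paper via Proposition \ref{wm-extension1}, which rests on this very lemma; invoking it inside the lemma's proof would force a joint induction on $d$ across both results, which you do not set up (the lemma as stated carries no hypothesis on $Y$, and the paper's proof is self-contained modulo Theorem \ref{wm-extension}). Missing mechanism: even granting that you bring the base point back to $y^{[d-1]}$, both that correction and the final sweep require transporting the whole set $\{{\bf w}\}\times\left(\pi^{-1}(y)\right)^{2^{d-1}}$ along a sequence $H_k\times H_k$ with $H_k\in\F^{[d-1]}$ and concluding that it converges to $\{{\bf z}\}\times\left(\pi^{-1}(y)\right)^{2^{d-1}}$; acting on single points only tells you that the second half lands somewhere over $y^{[d-1]}$, with no control of where. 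The paper's tool here is that a RIC extension is open, so $y\mapsto\pi^{-1}(y)$ is continuous and $T^{m^k_\ep}\pi^{-1}(y)=\pi^{-1}(T^{m^k_\ep}y)\to\pi^{-1}(y)$ in the Hausdorff metric whenever $T^{m^k_\ep}y\to y$. You never invoke openness of $\pi$, and without it the ``simultaneous approximation'' you defer cannot be carried out; so the proposal is incomplete at precisely the step you yourself call its technical heart.
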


\begin{proof}
The idea of proof is similar to Theorem \ref{wm-absolute}. When
$d=1$, for any $(x,x')\in X^{[1]}=X\times X$,
$\overline{\F^{[1]}}(x,x')=\overline{\O}\left((x,x'), \id\times
T\right)=\{x\}\times X$ and $\Q^{[1]}(X)=X\times X$. Hence the
results hold obviously. Now we show the case for $d=2$. Let ${\bf x
}=(x_1,x_2,x_3,x_4)\in X^{[2]}$ with
$\pi^{[2]}(x_1,x_2,x_3,x_4)=y^{[2]}$. By Theorem \ref{wm-extension},
there is a transitive point $(a,b)$ of $(R_\pi, T\times T)$ with
$\pi(a)=\pi(b)=y$. Since $(X,T)$ is minimal, there is some sequence
$\{n_i\}\subset \Z$ such that $T^{n_i}x_3\to a, i\to \infty$.
Without loss of generality, assume that $T^{n_i}x_4\to x_4', i\to
\infty$ for some $x_4'\in X$. Since $\pi(a)=y$, $\pi(x_4')=y$ too.
So
\begin{equation}\label{a1}
(\id \times \id \times T\times T)^{n_i}(x_1,x_2,x_3,x_4)\to
(x_1,x_2,a,x_4'), \ i\to \infty.
\end{equation}
Since $(X,T)$ is minimal, there is some sequence $\{m_i\}\subset \Z$
such that $T^{m_i}x_4'\to b, i\to \infty$. Without loss of
generality, assume that $T^{m_i}x_2\to x_2', i\to \infty$ for some
$x_2'\in X$. Since $\pi(b)=y$, $\pi(x_2')=y$ too. So
\begin{equation}\label{a2}
(\id \times T \times \id \times T)^{m_i}(x_1,x_2,a,x_4')\to
(x_1,x_2',a, b), \ i\to \infty.
\end{equation}
Hence by (\ref{a1}) and (\ref{a2}),
\begin{equation}\label{}
    (x_1,x_2',a,b)\in \overline{\F^{[2]}}({\bf x}).
\end{equation}
Thus for all $n\in \Z$,
$$(x_1,x_2',T^n a,T^nb)=(\id \times \id \times T\times T)^n(x_1,x_2',a,b)
\in \overline{\F^{[2]}}({\bf x}).$$ Since $(a,b)$ is a transitive
point of $(R_\pi, T\times T)$, it follows that
\begin{equation}\label{}
    \{x_1\}\times \{x_2'\}\times \pi^{-1}(y)\times
    \pi^{-1}(y)\subset \{x_1\}\times \{x_2'\}\times R_\pi \subset
\overline{\F^{[2]}}({\bf x}).
\end{equation}
Now we show that
\begin{equation}\label{a3}
 \{x_1\}\times \pi^{-1}(y)\times \pi^{-1}(y)\times
\pi^{-1}(y)=\{x_1\}\times ( \pi^{-1}(y))^3\subset
\overline{\F^{[2]}}({\bf x}).
\end{equation}
For any $z\in \pi^{-1}(y)$, there is a sequence $k_i\subset \Z$ such
that $T^{k_i}x_2'\to z, i\to \infty$. Thus
$T^{k_i}y=T^{k_i}\pi(x_2')=\pi(T^{k_i}x_2')\to \pi(z)=y, i\to
\infty$. Since $\pi$ is open, we have
$T^{k_i}\pi^{-1}(y)=\pi^{-1}(T^{k_i}y)\to \pi^{-1}(y), i\to \infty$
in the Hausdorff metric. Thus
\begin{equation*}
\{x_1\}\times \{z\}\times \pi^{-1}(y)^2\subset \overline{
\cup_{i=1}^\infty(\id \times T\times\id\times T)^{k_i}(\{x_1\}\times
\{x_2'\}\times \pi^{-1}(y)^2)} \subset\overline{\F^{[2]}}({\bf x}).
\end{equation*}
Since $z$ is arbitrary, we have (\ref{a3}). Similarly, we have
$\left(\pi^{-1}(y)\right)^4\subset \Q^{[2]}(X)$ and we are done for
$d=2$.

\medskip

Now assume we have (1) and (2) for $d-1$ already, and show the case
for $d$. Let ${\bf x}\in X^{[d]}$ with $x_\emptyset =x$ and
$\pi^{[d]}({\bf x})=y^{[d]}$.

Let ${\bf x}=({\bf x'},{\bf x''})$. Since $\pi$ is weakly mixing,
$(R^{2^{d-1}}_\pi, T^{[d-1]})$ is transitive. By Theorem
\ref{wm-extension} there is ${\bf a}\in R^{2^{d-1}}_\pi$ which is a
transitive point of $(R^{2^{d-1}}_\pi, T^{[d-1]})$ and
$\pi^{[d-1]}({\bf a})=y^{[d-1]}$. Without loss of generality, we may
assume that $a_\emptyset =x''_\emptyset$ (i.e. the first coordinate
of ${\bf a}$ is equal to that of ${\bf x''}$), otherwise we may use
the face transformation $\id^{[d-1]}\times T^{[d-1]}$ to find some
point in $\overline{\F^{[d]}}({\bf x})$ satisfying this property.

By the induction assumption for $d-1$,
$${\bf a}\in \{x_\emptyset''\}\times \left(\pi^{-1}(y)\right)^{2^{d-1}-1}\subset
  \overline{\F^{[d-1]}}({\bf
x''}).$$ Hence there is some sequence $F_k\in \F^{[d-1]}$ and ${\bf
w}\in X^{[d-1]}$ such that
$$F_k\times F_k ({\bf x})=F_k\times F_k({\bf x'},{\bf x''})\to ({\bf w}, {\bf a}), \ k\to \infty. $$
Especially $({\bf w},{\bf a})\in \overline{\F^{[d]}}({\bf x})$.
Since $\pi^{[d]}({\bf x})=y^{[d]}$ and $\pi^{[d-1]}({\bf
a})=y^{[d-1]}$, it is easy to verify that $\pi^{[d-1]}({\bf
w})=y^{[d-1]}$ and $w_\emptyset =x$. Note that
$$(T^{[d]}_d)^n({\bf w},{\bf a})=({\bf w},(T^{[d-1]})^n{\bf a})\in
\overline{\F^{[d]}}({\bf x}).$$ We have
$$\{{\bf w}\}\times \O({\bf a}, T^{[d-1]})\subset
\overline{\F^{[d]}}({\bf x}).$$ And so
\begin{equation}\label{d4}
\{{\bf w}\}\times \left(\pi^{-1}(y)\right)^{2^{d-1}}\subset \{{\bf
w}\}\times R^{2^{d-1}}_\pi=\{{\bf w}\}\times \overline{\O({\bf a},
T^{[d-1]})}\subset \overline{\F^{[d]}}({\bf x}).
\end{equation}
By the induction assumption for $d-1$, for ${\bf w}$ we have
\begin{equation}\label{d5}
    \{x\}\times \left(\pi^{-1}(y)\right)^{2^{d-1}-1}\subset \overline{\F^{[d-1]}}(\bf w).
\end{equation}
Hence for all ${\bf z}\in  \{x\}\times
\left(\pi^{-1}(y)\right)^{2^{d-1}-1}$, there is some sequence
$\{H_k\}\subset \F^{[d-1]}$ such that $H_k{\bf w}\to {\bf z}, k\to
\infty$. Since $\pi$ is open, similar to the proof of (\ref{a3}), we
have that $H_k\left(\pi^{-1}(y)\right)^{2^{d-1}}\to
\left(\pi^{-1}(y)\right)^{2^{d-1}}, k\to \infty$. Hence
$$H_k\times H_k\ \big(\{{\bf w}\}\times \left(\pi^{-1}(y)\right)^{2^{d-1}}\big )\to
\{{\bf z}\}\times \left(\pi^{-1}(y)\right)^{2^{d-1}}, k\to \infty.$$

Since $H_k\times H_k \in \F^{[d]}$ and ${\bf z}\in  \{x\}\times
\left(\pi^{-1}(y)\right)^{2^{d-1}-1}$ is arbitrary, it follows from
(\ref{d4}) that
$$\{x\}\times \left(\pi^{-1}(y)\right)^{2^{d-1}-1}\times
\left(\pi^{-1}(y)\right)^{2^{d-1}}=\{x\}\times
\left(\pi^{-1}(y)\right)^{2^d-1}
 \subset \overline{\F^{[d]}}({\bf x}).$$
Now by this fact it is easy to get
$\left(\pi^{-1}(y)\right)^{[d]}=\left(\pi^{-1}(y)\right)^{2^d}\subset
\Q^{[d]}(X)$. So (1) and (2) hold for the case $d$. This completes
the proof.
\end{proof}

In fact with a small modification of the above proof one can show
that $R^{2^d}_\pi\subset \Q^{[d]}(X)$. We do not know if
$\{x\}\times R^{2^d-1}_\pi\subset\overline{\F^{[d]}}({\bf x}).$

\subsection{Proof of Theorem \ref{main1}-(1)}

A subset $S\subseteq \Z$ is a {\em central set} if there exists a
system $(X,T)$, a point $x\in X$ and a minimal point $y\in X$
proximal to $x$, and a neighborhood $U_y$ of $y$ such that
$N(x,U_y)\subset S$, where $N(x,U_y)=\{n\in\Z:T^nx\in U_y\}$. It is
known that any central set is an IP-set \cite[Proposition 8.10.]{F}.

\begin{prop}\label{proximal-extension1}
Let $\pi: (X,T)\rightarrow (Y,T)$ be a proximal extension of minimal
systems and $d\in \N$. If $(\overline {\F^{[d]}}(y^{[d]}),\F^{[d]})$
is minimal for all $y\in Y$, then $(\overline
{\F^{[d]}}(x^{[d]}),\F^{[d]})$ is minimal for all $x\in X$.
\end{prop}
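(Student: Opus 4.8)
The plan is to lift $\F^{[d]}$-minimality through the proximal map $\pi^{[d]}\colon(X^{[d]},\F^{[d]})\to(Y^{[d]},\F^{[d]})$, using Lemma \ref{proximal-lemma} to collapse a whole fibre to the single point $x$ and then exploiting the IP-structure of the return times of a minimal idempotent to realize this collapse by \emph{face} transformations. Fix $x\in X$ and put $y=\pi(x)$. Since $\F^{[d]}$ acts on a compact space, $\overline{\F^{[d]}}(x^{[d]})$ contains an $\F^{[d]}$-minimal set $M$. As $\pi^{[d]}$ intertwines the $\F^{[d]}$-actions, $\pi^{[d]}(M)$ is a nonempty closed $\F^{[d]}$-invariant subset of $\overline{\F^{[d]}}(y^{[d]})$, which is minimal by hypothesis; hence $\pi^{[d]}(M)=\overline{\F^{[d]}}(y^{[d]})\ni y^{[d]}$, and there is some ${\bf z}\in M$ with $\pi^{[d]}({\bf z})=y^{[d]}$, i.e. $z_\ep\in\pi^{-1}(y)$ for every $\ep\subseteq[d]$. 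I would record here the elementary but crucial fact that, since every $S\in\F^{[d]}$ satisfies $S_\emptyset=\id$, the $\emptyset$-coordinate is invariant under $\F^{[d]}$ and is constant equal to $x$ on all of $\overline{\F^{[d]}}(x^{[d]})$; in particular $z_\emptyset=x$.

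Since the goal is $M=\overline{\F^{[d]}}(x^{[d]})$, it suffices to prove $x^{[d]}\in M=\overline{\F^{[d]}}({\bf z})$. Because $X$ is minimal, $x$ is a minimal point, so by Proposition \ref{minimal-point} there is a minimal idempotent $u\in E(X,T)$ with $ux=x$; and for each $\ep$ the pair $(x,z_\ep)$ is proximal, since both points lie in $\pi^{-1}(y)\subseteq R_\pi\subseteq {\bf P}(X,T)$. By the standard property that a minimal idempotent fixing one member of a proximal pair sends the other to it (equivalently, this is the content of Lemma \ref{proximal-lemma} combined with Proposition \ref{minimal-point}), we obtain $uz_\ep=x$ for all $\ep$. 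Writing $\F^{[d]}$ as the abelian group $\{S^{\bf k}:{\bf k}\in\Z^d\}$, where $(S^{\bf k}{\bf z})_\ep=T^{{\bf k}\cdot\ep}z_\ep$, reaching $x^{[d]}$ amounts to finding, for each neighborhood $U$ of $x$, a single vector ${\bf k}\in\Z^d$ with $T^{{\bf k}\cdot\ep}z_\ep\in U$ \emph{simultaneously} for all nonempty $\ep$ (the coordinate $\ep=\emptyset$ is automatic because $z_\emptyset=x$).

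The main obstacle is exactly this simultaneous recurrence: one vector ${\bf k}$ must place all of the $2^d-1$ linear forms ${\bf k}\cdot\ep$ into the respective return-time sets $N(z_\ep,U)=\{n:T^nz_\ep\in U\}$, and this is where the idempotent is indispensable. Since $uz_\ep=x\in U$, each $N(z_\ep,U)$ belongs to the idempotent ultrafilter lifting $u$, so the finite intersection $A:=\bigcap_{\emptyset\neq\ep\subseteq[d]}N(z_\ep,U)$ belongs to it as well and hence contains a set $FS(\{p_i\}_{i=1}^\infty)$, as every member of an idempotent ultrafilter contains such finite-sum sets (compare the central/IP-set discussion preceding the proposition). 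Taking ${\bf k}=(p_1,\ldots,p_d)$ gives ${\bf k}\cdot\ep=\sum_{j\in\ep}p_j\in FS(\{p_i\})\subseteq A$, so $T^{{\bf k}\cdot\ep}z_\ep\in U$ for every nonempty $\ep$; thus $S^{\bf k}{\bf z}$ lies in the product neighborhood $U^{2^d}$ of $x^{[d]}$. Letting $U$ shrink yields $x^{[d]}\in\overline{\F^{[d]}}({\bf z})=M$. As $M$ is closed and $\F^{[d]}$-invariant, this forces $\overline{\F^{[d]}}(x^{[d]})\subseteq M$, whence $\overline{\F^{[d]}}(x^{[d]})=M$ is $\F^{[d]}$-minimal, which is the desired conclusion.
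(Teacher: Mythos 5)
Your overall frame is the same as the paper's: reduce to showing $x^{[d]}\in\overline{\F^{[d]}}({\bf z})$ for a point ${\bf z}$ with $z_\emptyset=x$ and all coordinates in $\pi^{-1}(y)$, collapse the fibre by an element of the enveloping semigroup, and then use the IP (finite-sums) structure of the return times to turn simultaneous returns under the \emph{diagonal} action into a single \emph{face} transformation ${\bf k}=(p_1,\ldots,p_d)$. The reduction via a minimal subset $M$ and the finite-sums step are both correct. The gap is the collapsing step. The ``standard property'' you invoke --- that \emph{any} minimal idempotent $u$ with $ux=x$ must send every point proximal to $x$ onto $x$ --- is false in general, and it is not what Lemma \ref{proximal-lemma} plus Proposition \ref{minimal-point} give: that lemma produces \emph{some} $p\in E(X,T)$ (not an idempotent, and not one chosen in advance) with $pz_\ep=px=x$. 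To see that your version cannot be a theorem, note it would force the proximal relation to be transitive on every minimal system: given $(x,z),(z,w)\in{\bf P}$, the set $\{q\in E(X,T):qz=qw\}$ is a closed left ideal, hence contains a minimal left ideal $I$; since $x$ is a minimal point, $I$ contains an idempotent $v$ with $vx=x$ (Proposition \ref{minimal-point} applies in every minimal left ideal); your property applied to $(x,z)$ and $v$ would give $vz=x$, whence $vw=vz=x=vx$ and $(x,w)\in{\bf P}$. But it is classical that ${\bf P}$ need not be transitive for minimal systems (see \cite{Au88}), so a minimal idempotent fixing $x$ need not lie in the collapsing ideal of a given proximal pair.

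The repair is exactly the paper's move, and it leaves the rest of your argument intact: do not prescribe the idempotent first. Apply Lemma \ref{proximal-lemma} to $z_\emptyset=x$ and the fibre points $z_\ep\in\pi^{-1}(y)$ to get one $p\in E(X,T)$ with $pz_\ep=px=x$ for all $\ep$; thus ${\bf z}$ is proximal to $x^{[d]}$ under the diagonal action $T^{[d]}$, and $x^{[d]}$ is a $T^{[d]}$-minimal point. Now either (i) quote the central-set fact, as the paper does: $N_{T^{[d]}}({\bf z},{\bf U})$ is a central set, hence contains some $FS(\{p_i\}_{i=1}^\infty)$; or (ii) if you want your ultrafilter phrasing, first upgrade to an idempotent in the correct order: the collapsing set $\{q\in E(X,T): qz_\ep=qx\ \text{for all}\ \ep\}$ is a nonempty closed left ideal, hence contains a minimal left ideal, which (again by Proposition \ref{minimal-point}) contains an idempotent $v$ with $vx=x$; then automatically $vz_\ep=vx=x$ for all $\ep$, and your idempotent-ultrafilter argument runs verbatim with this $v$ in place of your prescribed $u$. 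With either patch, the concluding step (one vector ${\bf k}$ with ${\bf k}\cdot\ep\in FS(\{p_i\}_{i=1}^d)$, so the face transformation puts ${\bf z}$ into $U^{2^d}$) goes through and the proof is complete.
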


\begin{proof}
It is sufficient to show that for any ${\bf x}\in \overline
{\F^{[d]}}(x^{[d]})$, we have $x^{[d]}\in \overline {\F^{[d]}}({\bf
x})$. Let $y=\pi(x)$. Then by the assumption $(\overline
{\F^{[d]}}(y^{[d]}), \F^{[d]})$ is minimal. Note that $\pi^{[d]}:
(\overline {\F^{[d]}}(x^{[d]}),\F^{[d]}) \rightarrow (\overline
{\F^{[d]}}(y^{[d]}),\F^{[d]})$ is a factor map. Especially there is
some ${\bf y}\in \overline {\F^{[d]}}(y^{[d]})$ such that
$\pi^{[d]}({\bf x})={\bf y}$.

Since ${\bf y}\in \overline {\F^{[d]}}(y^{[d]})$ and $(\overline
{\F^{[d]}}(y^{[d]}), \F^{[d]})$ is minimal, there is some sequence
$F_k\in \F^{[d]}$ such that $$F_k {\bf y}\to y^{[d]},\ k\to \infty
.$$ Without loss of generality, we may assume that
\begin{equation}\label{d8}
    F_k {\bf x}\to {\bf z}, \ k\to \infty.
\end{equation}
Then $\pi^{[d]}({\bf z})=\lim_k \pi^{[d]}(F_k {\bf x})=\lim_k F_k
{\bf y}=y^{[d]}$. That is,
$$z_\ep \in \pi^{-1}(y), \ \forall \ep\in \{0,1\}^d.$$
Since $\pi$ is proximal, by Lemma \ref{proximal-lemma} there is some
$p\in E(X,T)$ such that $$pz_\ep=px=x, \quad \forall \ep\in
\{0,1\}^d.$$ That is, $p{\bf z}=x^{[d]}=px^{[d]}$, i.e. ${\bf z}$ is
proximal to $x^{[d]}$ under the action of $T^{[d]}$. Since $x^{[d]}$
is $T^{[d]}$-minimal, for any neighborhood ${\bf U}$ of ${x^{[d]}}$,
$$N_{T^{[d]}}({\bf z}, {\bf U})=\{n\in \Z: (T^{[d]})^n {\bf z}\in {\bf U}\}$$
is a central set and hence contains some IP set
$FS(\{p_i\}_{i=1}^\infty)$. Particularly,
$$FS(\{p_i\}_{i=1}^d)\subseteq N_{T^{[d]}}({\bf z}, {\bf U}).$$
This means for all $\ep\in \{0,1\}^d\setminus \{{\bf 0}\}$,
$$(T^{[d]})^{{\bf p}\cdot \ep}{\bf z}\in {\bf U},$$ where ${\bf p}=(p_1,p_2,\ldots, p_d)\in
\Z^d$. Especially, $$(T^{{\bf p}\cdot \ep}z_\ep)_{\ep\in
\{0,1\}^d}\in {\bf U}$$ In other words, we have
$$(T^{[d]}_1)^{p_1}(T^{[d]}_2)^{p_2}\ldots (T^{[d]}_d)^{p_d}{\bf z}\in {\bf U}.$$
Since ${\bf U}$ is arbitrary, we have that $x^{[d]}\in
\overline{\F^{[d]}}({\bf z})$. Combining with (\ref{d8}), we have
$$x^{[d]}\in \overline{\F^{[d]}}({\bf x}).$$
Thus $(\overline {\F^{[d]}}(x^{[d]}),\F^{[d]})$ is minimal. This
completes the proof.
\end{proof}

\begin{prop}\label{distal-extension1}
Let $\pi: (X,T)\rightarrow (Y,T)$ be a distal extension of minimal
systems and $d\in \N$. If $(\overline {\F^{[d]}}(y^{[d]}),\F^{[d]})$
is minimal for all $y\in Y$, then $(\overline
{\F^{[d]}}(x^{[d]}),\F^{[d]})$ is minimal for all $x\in X$.
\end{prop}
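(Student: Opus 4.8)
The plan is to mirror the structure of the proximal case (Proposition \ref{proximal-extension1}), but to exploit the rigidity of distal extensions rather than a central-set argument. Set $y=\pi(x)$. The essential observation is that the map $\pi^{[d]}\colon X^{[d]}\to Y^{[d]}$ intertwines the face group actions, since $\pi$ commutes with $T$ and hence with every face transformation; thus $\pi^{[d]}$ is a factor map from $(X^{[d]},\F^{[d]})$ onto $(Y^{[d]},\F^{[d]})$. I would then reduce the whole statement to a single application of Lemma \ref{distal-lemma1}.

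The key step is to verify that $\pi^{[d]}\colon (X^{[d]},\F^{[d]})\to (Y^{[d]},\F^{[d]})$ is itself a \emph{distal} extension. Here I would use that $\F^{[d]}$ is abelian and that each $S\in\F^{[d]}$ acts on the coordinate indexed by $\ep$ as a power of $T$: writing $S=(T^{[d]}_1)^{k_1}\cdots (T^{[d]}_d)^{k_d}$, one has $(S{\bf x})_\ep=T^{{\bf k}\cdot\ep}x_\ep$ with ${\bf k}=(k_1,\ldots,k_d)$. Consequently, if ${\bf x},{\bf x}'$ lie in a common fiber of $\pi^{[d]}$ and are proximal under $\F^{[d]}$ — say $p{\bf x}=p{\bf x}'$ for some $p\in E(X^{[d]},\F^{[d]})$ arising as a limit of a net $\{S_i\}\subset\F^{[d]}$ — then reading off the $\ep$-th coordinate shows that $T^{{\bf k}_i\cdot\ep}x_\ep$ and $T^{{\bf k}_i\cdot\ep}x'_\ep$ converge to a common point, so $(x_\ep,x'_\ep)\in{\bf P}(X,T)$, while $\pi(x_\ep)=\pi(x'_\ep)$. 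Distality of $\pi$ then forces $x_\ep=x'_\ep$ for every $\ep$, i.e. ${\bf x}={\bf x}'$. Hence $\pi^{[d]}$ has no nontrivial proximal pairs inside its fibers and is a distal extension.

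With distality in hand the conclusion is immediate. By hypothesis $(\overline{\F^{[d]}}(y^{[d]}),\F^{[d]})$ is minimal, so $y^{[d]}=\pi^{[d]}(x^{[d]})$ is a minimal point of $(Y^{[d]},\F^{[d]})$. Applying Lemma \ref{distal-lemma1} to the distal extension $\pi^{[d]}$ at the point $x^{[d]}$ yields that $x^{[d]}$ is a minimal point of $(X^{[d]},\F^{[d]})$; this is precisely the minimality of $(\overline{\F^{[d]}}(x^{[d]}),\F^{[d]})$, as desired.

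I expect the main obstacle to be the careful justification of distality in the second step, namely translating $\F^{[d]}$-proximality of ${\bf x},{\bf x}'$ into coordinatewise $T$-proximality. This hinges on the fact that every element of the face group acts coordinatewise by powers of $T$, so that a net (equivalently, an enveloping-semigroup element) collapsing ${\bf x}$ and ${\bf x}'$ simultaneously collapses each pair $(x_\ep,x'_\ep)$ in $(X,T)$; once this is set up, the result follows from Lemma \ref{distal-lemma1} with no further work.
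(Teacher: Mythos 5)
Your proof is correct and takes essentially the same route as the paper's: the paper's entire proof is the observation that $\pi^{[d]}$ (restricted to the orbit closures) is a distal extension for the $\F^{[d]}$-actions, followed by an application of Lemma \ref{distal-lemma1}. The only difference is that you work with the full product systems $(X^{[d]},\F^{[d]})\to (Y^{[d]},\F^{[d]})$ and explicitly verify the distality of $\pi^{[d]}$ via the coordinatewise action of $\F^{[d]}$ by powers of $T$ — a verification the paper leaves implicit — which is a welcome addition but not a different argument.
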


\begin{proof}
It follows from Lemma \ref{distal-lemma1}, since it is easy to check
that $\pi^{[d]}: (\overline {\F^{[d]}}(x^{[d]}),\F^{[d]})
\rightarrow (\overline {\F^{[d]}}(y^{[d]}),\F^{[d]})$ is a distal
extension.
\end{proof}

\begin{prop}\label{wm-extension1}
Let $\pi : (X, T)\rightarrow (Y, T)$ be a RIC weakly mixing
extension of minimal systems and $d\in \N$. If $(\overline
{\F^{[d]}}(y^{[d]}),\F^{[d]})$ is minimal for all $y\in Y$, then
$(\overline {\F^{[d]}}(x^{[d]}),\F^{[d]})$ is minimal for all $x\in
X$.
\end{prop}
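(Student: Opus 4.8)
The plan is to follow exactly the same pattern used to prove the analogous weakly mixing statement, Lemma \ref{wm-lemma} and Theorem \ref{wm-absolute}, but relativized over $Y$. Since $\pi$ is a RIC weakly mixing extension, Lemma \ref{wm-lemma} is available and supplies the crucial engine: for every $y\in Y$ and every ${\bf x}\in X^{[d]}$ with $x_\emptyset=x$ and $\pi^{[d]}({\bf x})=y^{[d]}$, we have $\{x\}\times\left(\pi^{-1}(y)\right)^{[d]}_*\subset\overline{\F^{[d]}}({\bf x})$. This is precisely the statement that the fibre over $y^{[d]}$ inside $\overline{\F^{[d]}}({\bf x})$ is as large as possible. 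To prove minimality of $(\overline{\F^{[d]}}(x^{[d]}),\F^{[d]})$, it suffices, as in Proposition \ref{proximal-extension1}, to show that $x^{[d]}\in\overline{\F^{[d]}}({\bf x})$ for every ${\bf x}\in\overline{\F^{[d]}}(x^{[d]})$.

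The key steps I would carry out are as follows. First, fix $x\in X$ with $y=\pi(x)$, and take any ${\bf x}\in\overline{\F^{[d]}}(x^{[d]})$. Observe that $\pi^{[d]}\colon(\overline{\F^{[d]}}(x^{[d]}),\F^{[d]})\to(\overline{\F^{[d]}}(y^{[d]}),\F^{[d]})$ is a factor map, so $\pi^{[d]}({\bf x})={\bf y}$ for some ${\bf y}\in\overline{\F^{[d]}}(y^{[d]})$. By the hypothesis that $(\overline{\F^{[d]}}(y^{[d]}),\F^{[d]})$ is minimal, choose $F_k\in\F^{[d]}$ with $F_k{\bf y}\to y^{[d]}$, and (passing to a subsequence) assume $F_k{\bf x}\to{\bf z}$. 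Then $\pi^{[d]}({\bf z})=y^{[d]}$, so every coordinate $z_\ep$ lies in $\pi^{-1}(y)$ and in particular $z_\emptyset=x$ (since $\pi$ is distal on fibres here one must be a little careful, but one may arrange $z_\emptyset=x$ by composing with the first face coordinates, or simply note the first coordinate is fixed by $\F^{[d]}$ so $z_\emptyset=x_\emptyset=x$). Thus ${\bf z}\in\{x\}\times\left(\pi^{-1}(y)\right)^{[d]}_*$. Now apply Lemma \ref{wm-lemma}-(2) with ${\bf z}$ in place of ${\bf x}$: since $z_\emptyset=x$ and $\pi^{[d]}({\bf z})=y^{[d]}$, we get $\{x\}\times\left(\pi^{-1}(y)\right)^{[d]}_*\subset\overline{\F^{[d]}}({\bf z})$, and in particular $x^{[d]}\in\overline{\F^{[d]}}({\bf z})\subset\overline{\F^{[d]}}({\bf x})$.

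The last inclusion uses that ${\bf z}\in\overline{\F^{[d]}}({\bf x})$ (as ${\bf z}=\lim_k F_k{\bf x}$), so $\overline{\F^{[d]}}({\bf z})\subset\overline{\F^{[d]}}({\bf x})$; chaining the inclusions gives $x^{[d]}\in\overline{\F^{[d]}}({\bf x})$, which establishes minimality of $(\overline{\F^{[d]}}(x^{[d]}),\F^{[d]})$ for all $x\in X$. The main obstacle I anticipate is the verification that the limit point ${\bf z}$ genuinely satisfies the hypotheses of Lemma \ref{wm-lemma}-(2), namely $z_\emptyset=x$ together with $\pi^{[d]}({\bf z})=y^{[d]}$; the first coordinate being fixed under the face group $\F^{[d]}$ (recall $\F^{[d]}=\{S\in\G^{[d]}:S_\emptyset=\mathrm{id}\}$) makes $z_\emptyset=x$ automatic, and the factor map identity forces the fibre condition, so this obstacle is mild. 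The real content is entirely imported from Lemma \ref{wm-lemma}, whose inductive proof over $d$ handles the weak mixing combinatorics; the present proposition is then a short deduction that packages the large-fibre conclusion into a minimality statement.
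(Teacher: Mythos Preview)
Your proposal is correct and follows essentially the same route as the paper's proof: push ${\bf x}$ down to ${\bf y}$, use minimality in $Y$ to find $F_k$ sending ${\bf y}$ to $y^{[d]}$, pass to a limit ${\bf z}$ of $F_k{\bf x}$ lying over $y^{[d]}$, and then invoke Lemma~\ref{wm-lemma}-(2) to conclude $x^{[d]}\in\overline{\F^{[d]}}({\bf z})\subset\overline{\F^{[d]}}({\bf x})$. Your parenthetical about ``$\pi$ is distal on fibres'' is a red herring---as you yourself note, $z_\emptyset=x$ holds simply because the $\emptyset$-coordinate is fixed by every element of $\F^{[d]}$---but this does not affect the validity of the argument.
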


\begin{proof}
It is sufficient to show that for any ${\bf x}\in \overline
{\F^{[d]}}(x^{[d]})$, we have $x^{[d]}\in \overline {\F^{[d]}}({\bf
x})$. Let $y=\pi(x)$. Then by the assumption $(\overline
{\F^{[d]}}(y^{[d]}), \F^{[d]})$ is minimal. Note that $\pi^{[d]}:
(\overline {\F^{[d]}}(x^{[d]}),\F^{[d]}) \rightarrow (\overline
{\F^{[d]}}(y^{[d]}),\F^{[d]})$ is a factor map. Especially there is
some ${\bf y}\in \overline {\F^{[d]}}(y^{[d]})$ such that
$\pi^{[d]}({\bf x})={\bf y}$.

Since ${\bf y}\in \overline {\F^{[d]}}(y^{[d]})$ and $(\overline
{\F^{[d]}}(y^{[d]}), \F^{[d]})$ is minimal, there is some sequence
$F_k\in \F^{[d]}$ such that $$F_k {\bf y}\to y^{[d]},\ k\to \infty
.$$ Without loss of generality, we may assume that
\begin{equation}\label{d7}
    F_k {\bf x}\to {\bf z}, \ k\to \infty.
\end{equation}
Then $\pi^{[d]}({\bf z})=\lim_k \pi^{[d]}(F_k {\bf x})=\lim_k F_k
{\bf y}=y^{[d]}$. By Lemma \ref{wm-lemma}
$$x^{[d]}\in \{x\}\times
\left(\pi^{-1}(y)\right)^{2^d-1}\subset \overline{\F^{[d]}}({\bf
z}).$$ Together with (\ref{d7}), we have $x^{[d]}\in \overline
{\F^{[d]}}({\bf x})$. This completes the proof.
\end{proof}

\medskip

\noindent {\bf Proof of Theorem \ref{main1}-(1):} By the structure
theorem \ref{structure}, we have the following diagram, where
$Y_\infty$ is a strictly PI-system, $\phi$ is RIC weakly mixing
extension and $\pi$ is proximal.

\[
\begin{CD}
X_\infty @>{\pi}>> X\\
      @VV{\phi}V\\
Y_\infty
\end{CD}
\]

\medskip

Since the inverse limit of minimal systems is minimal, it follows
from Propositions \ref{proximal-extension1}, \ref{distal-extension1}
that the result holds for $Y_\infty$. By Proposition
\ref{wm-extension1} it also holds for $X_\infty$. Since the factor
of a minimal system is always minimal, it is easy to see that we
have the theorem for $X$. \hfill $\square$

\subsection{Minimality of $(\Q^{[d]}, \G^{[d]})$} We will need the
following theorem mentioned in \cite{HKM}, where no proof is
included. We give a proof (due to Glasner-Ellis) here for
completeness. Note one can also prove this result using the method
in the previous subsection.
\begin{prop}\label{minimal}
Let $(X, T)$ be a minimal system and let $d \ge 1$ be an integer.
Let $A$ be a $T^{[d]}$-minimal subset of $X^{[d]}$ and set
$N=\overline{\O(A, \F^{[d]})}=\cl\big( \bigcup\{SA: S\in
\F^{[d]}\}\big )$. Then $(N, \G^{[d]})$ is a minimal system, and
$\F^{[d]}$-minimal points are dense in $N$.
\end{prop}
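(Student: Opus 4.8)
The plan is to establish the $\G^{[d]}$-minimality of $(N,\G^{[d]})$ first and to deduce the density of $\F^{[d]}$-minimal points from it, following the enveloping-semigroup line of Proposition~\ref{proximal-extension1}. Before anything else I would check that $N$ is $\G^{[d]}$-invariant: each face transformation applies powers of $T$ coordinatewise and therefore commutes with the diagonal $T^{[d]}$, while $A$ is $T^{[d]}$-invariant; hence $T^{[d]}(SA)=S(T^{[d]}A)=SA$ for every $S\in\F^{[d]}$, so $T^{[d]}$ preserves $\bigcup_S SA$ and its closure $N$, and since $\F^{[d]}$ obviously preserves $N$ and $\G^{[d]}=\langle T^{[d]},\F^{[d]}\rangle$, the system $(N,\G^{[d]})$ is well defined. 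Granting its minimality, the density of $\F^{[d]}$-minimal points is a short consequence of normality: because $\G^{[d]}$ is abelian, $\F^{[d]}$ is normal, so for any $\F^{[d]}$-minimal set $M_0\subseteq N$ and any $g\in\G^{[d]}$ the set $gM_0$ is again $\F^{[d]}$-minimal; thus the union $U$ of all $\F^{[d]}$-minimal subsets of $N$ is $\G^{[d]}$-invariant and nonempty, and $\G^{[d]}$-minimality forces $\overline{U}=N$, i.e.\ the $\F^{[d]}$-minimal points are dense.

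For the minimality itself I would first reduce it to an almost-periodicity statement about the points of $A$. Writing $H=\langle T^{[d]}\rangle$ and using $\G^{[d]}=\F^{[d]}H$ together with $\overline{\O(a,H)}=A$ (valid since $A$ is $T^{[d]}$-minimal), one gets for each $S\in\F^{[d]}$ that $SA=S\,\overline{\O(a,H)}=\overline{S\,\O(a,H)}\subseteq\overline{\O(a,\G^{[d]})}$, whence $N=\overline{\bigcup_S SA}\subseteq\overline{\O(a,\G^{[d]})}$ and therefore $\overline{\O(a,\G^{[d]})}=N$ for every $a\in A$. So every point of $A$ is $\G^{[d]}$-transitive, and it suffices to show that some (equivalently every) point of $A$ is $\G^{[d]}$-almost periodic, for then $N=\overline{\O(a,\G^{[d]})}$ is itself a minimal set.

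To insert a point of $A$ into an arbitrary $\G^{[d]}$-minimal subset $M\subseteq N$ I would argue as in Proposition~\ref{proximal-extension1}. The set $M$ is $H$-invariant and closed, hence contains an $H$-minimal set, and a point $a\in A$ is $T^{[d]}$-almost periodic, so by Proposition~\ref{minimal-point} it is fixed by a minimal idempotent of $E(X^{[d]},T^{[d]})\cong E(X,T)$ acting diagonally; Proposition~\ref{proximal-Ellis} and Lemma~\ref{proximal-lemma} then give the proximality needed to compare $a$ with a suitable point of $M$ whose coordinates lie over the same base point in $X$. For a neighbourhood $\mathbf U$ of $a$ the return-time set $N_{T^{[d]}}(\cdot,\mathbf U)$ is a central set and hence an IP-set, and extracting a finite-sums block $FS(\{p_i\}_{i=1}^d)$ exactly as in Proposition~\ref{proximal-extension1} produces a vector ${\bf p}=(p_1,\dots,p_d)$ with $(T^{[d]}_1)^{p_1}\cdots(T^{[d]}_d)^{p_d}$ carrying the comparison point back into $\mathbf U$, i.e.\ a genuine element of $\F^{[d]}$ realizing the required return. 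The main obstacle is precisely this last step: a $\G^{[d]}$-element moves the $2^d$ coordinates by the correlated exponents $m+{\bf k}\cdot\ep$, so the coordinates cannot be steered independently, and it is the IP/central-set structure of the recurrence (Furstenberg, Prop.\ 8.10, cited in the text) that lets all coordinates be controlled simultaneously through a single cube pattern. Carrying this through puts a point of $A$ in $M$, so $M=N$ and $(N,\G^{[d]})$ is minimal.

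Finally, as the statement itself notes, I would cross-check the conclusion through the structure theorem: lifting the assertion along the tower of Theorem~\ref{structure} and reducing, exactly as in Propositions~\ref{proximal-extension1}, \ref{distal-extension1} and \ref{wm-extension1} (the inverse-limit step being immediate), to proximal, distal and RIC weakly mixing extensions, with the $T^{[d]}$-minimal set $A$ and its orbit closure $N=\overline{\O(A,\F^{[d]})}$ playing the roles there taken by the diagonal orbit closures $\overline{\F^{[d]}}(x^{[d]})$ of Theorem~\ref{main1}. This second route avoids the delicate simultaneous-recurrence argument at the cost of invoking the full structure theory, and serves as an independent confirmation.
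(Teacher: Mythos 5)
Your skeleton is fine: $N$ is $\G^{[d]}$-invariant, every point of $A$ is $\G^{[d]}$-transitive in $N$, so minimality of $(N,\G^{[d]})$ reduces to $\G^{[d]}$-almost periodicity of the points of $A$, and your normality argument deducing density of $\F^{[d]}$-minimal points from minimality is correct (and essentially the paper's closing paragraph). The gap is in the engine you propose for the one step that carries all the content. You want to rerun Proposition \ref{proximal-extension1}: compare $a\in A$ with a point of a $\G^{[d]}$-minimal set $M$, extract proximality from Proposition \ref{proximal-Ellis} and Lemma \ref{proximal-lemma}, and then convert a central/IP return-time set into a face-group return. But Lemma \ref{proximal-lemma} is a statement about \emph{proximal extensions}, and there is no proximal extension anywhere in this setting: the coordinate projections $\pi_\ep\colon N\to X$ are not proximal extensions in general, and nothing else is available to play the role of $\pi$. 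Worse, the proximality you need simply does not exist in the case that matters. If $X$ is minimal equicontinuous (say an irrational rotation) and $d=2$, take $A$ to be the $T^{[2]}$-orbit closure of $(x,x+a,x+b,x+c)$; then $N$ is a proper coset of a subtorus of $X^{[2]}$, while $(X^{[2]},T^{[2]})$ and $(X^{[2]},\G^{[2]})$ are equicontinuous, so their proximal relations are the diagonal. Hence ``the proximality needed to compare $a$ with a suitable point of $M$'' forces that point to \emph{equal} $a$, i.e.\ your argument presupposes $A\cap M\neq\emptyset$, which is precisely what is to be proved. Since centrality of $N_{T^{[d]}}(\cdot,{\bf U})$ is by definition a consequence of proximality to a minimal point, without proximality the whole IP/central-set mechanism never gets started; it worked in Proposition \ref{proximal-extension1} only because a proximal extension was part of the hypothesis there.

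The paper's proof is designed exactly to avoid this: it is a purely algebraic two-semigroup argument with no proximality. Fix a minimal idempotent $u\in E(N,T^{[d]})$ with $ua=a$ for a given $a\in A$, choose a minimal idempotent $v$ in the closed left ideal $E(N,\G^{[d]})u$ (so $vu=v$), and prove $uv=u$ coordinatewise: for each $\ep$, the projection $\pi^*_\ep$ sends both $E(N,T^{[d]})$ and $E(N,\G^{[d]})$ into the \emph{same} semigroup $E(X,T)$, where $v_\ep u_\ep=v_\ep$ places $v_\ep$ in the minimal left ideal containing $u_\ep$, whence $u_\ep v_\ep=u_\ep$ by Proposition \ref{left-ideal}; since an element of $E(N,\G^{[d]})$ is determined by its projections, $uv=u$, so $u$ is a minimal idempotent of $E(N,\G^{[d]})$ and $a=ua$ is $\G^{[d]}$-almost periodic. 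This works verbatim in the distal case where your mechanism is vacuous. Your fallback route through the structure theorem does not rescue the proposal either: Propositions \ref{proximal-extension1}, \ref{distal-extension1} and \ref{wm-extension1} are about $\F^{[d]}$-minimality of the diagonal orbit closures $\overline{\F^{[d]}}(x^{[d]})$, not about $\G^{[d]}$-minimality of $\overline{\O(A,\F^{[d]})}$ for an arbitrary $T^{[d]}$-minimal $A\subseteq X^{[d]}$; each of the three extension steps would have to be restated and reproved in that generality (with $\pi^{[d]}(A)$ as the base datum), and you have not done this, so it remains a sketch rather than an independent confirmation.
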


\begin{proof}
The proof is similar to the one in \cite{G96}. Let $E=E(N,\G^{[d]})$
be the enveloping semigroup of $(N, \G^{[d]})$. Let $\pi_\ep:
N\rightarrow X$ be the projection of $N$ on the $\ep$-th component,
$\ep\in \{0,1\}^d$. We consider the action of the group $\G^{[d]}$
on the $\ep$-th component via the representation $T^{[d]}\mapsto T$
and
$$T^{[d]}_j\mapsto \left\{
                     \begin{array}{ll}
                       T, & \hbox{$j\in \ep$;} \\
                       {\rm id}, & \hbox{$j\not \in \ep$.}
                     \end{array}
                   \right.
$$
With respect to this action of $\G^{[d]}$ on $X$ the map $\pi_\ep$
is a factor map $\pi_\ep: (N,\G^{[d]})\rightarrow (X,\G^{[d]})$. Let
$\pi^*_\ep: E(N,\G^{[d]})\rightarrow E(X, \G^{[d]})$ be the
corresponding homomorphism of enveloping semigroups. Notice that for
this action of $\G^{[d]}$ on $X$ clearly $E(X,\G^{[d]})=E(X,T)$ as
subsets of $X^X$.

Let now $u\in E(N,T^{[d]})$ be any minimal idempotent in the
enveloping semigroup of $(N,T^{[d]})$. Choose $v$ a minimal
idempotent in the closed left ideal $E(N,\G^{[d]})u$. Then $vu=v$,
i.e. $v<_L u $. Set for each $\ep\in \{0,1\}^d$, $u_\ep=\pi^*_\ep u$
and $v_\ep=\pi^*_\ep v$. We want to show that also $uv=u$, i.e.
$u<_L v$. Note that as an element of $E(N, \G^{[d]})$ is determined
by its projections, it suffices to show that for each $\ep\in
\{0,1\}^d$, $u_\ep v_\ep=u_\ep$.

Since for each $\ep\in \{0,1\}^d$ the map $\pi^*_\ep$ is a semigroup
homomorphism, we have $v_\ep u_\ep=v_\ep$ as $vu=v$. In particular
we deduce that $v_\ep$ is an element of the minimal left ideal of
$E(X,T)$ which contains $u_\ep$. In turn this implies
$$u_\ep v_\ep= u_\ep v_\ep u_\ep =u_\ep ;$$
and it follows that indeed $uv=u$. Thus $u$ is an element of the
minimal left ideal of $E(N,\G^{[d]})$ which contains $v$, and
therefore $u$ is a minimal idempotent of $E(N,\G^{[d]})$.

Now let $x$ be an arbitrary point in $A$ and let $u \in E(N,
T^{[d]})$ be a minimal idempotent with $ux=x$. By the above
argument, $u$ is also a minimal idempotent of $E(N,\G^{[d]})$,
whence $N=\overline{\O(A, \F^{[d]})}=\overline{\O(x,\G^{[d]})}$ is
$\G^{[d]}$-minimal.

Finally, we show $\F^{[d]}$-minimal points are dense in $N$. Let
$B\subseteq N$ be an $\F^{[d]}$-minimal subset. Then $\O(B,
T^{[d]})=\bigcup \{(T^{[d]})^nB: n\in \Z\}$ is a
$\G^{[d]}$-invariant subset of $N$. Since $(N,\G^{[d]})$ is minimal,
$\O(B, T^{[d]})$ is dense in $N$. Note that every point in $\O(B,
T^{[d]})$ is $\F^{[d]}$-minimal, hence the proof is completed.
\end{proof}

Setting $A=\Delta^{[d]}$ we have
\begin{cor}\label{Q-is-minimal}
Let $(X, T)$ be a minimal system and let $d \ge 1$ be an integer.
Then $(\Q^{[d]}, \G^{[d]})$ is a minimal system, and
$\F^{[d]}$-minimal points are dense in $\Q^{[d]}$.
\end{cor}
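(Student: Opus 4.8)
The plan is to deduce this statement directly from Proposition \ref{minimal} by taking $A=\D^{[d]}$, the diagonal of $X^{[d]}$. To do so I must check two things: first, that $\D^{[d]}$ is a $T^{[d]}$-minimal subset of $X^{[d]}$, so that Proposition \ref{minimal} applies; and second, that the set $N=\overline{\O(\D^{[d]},\F^{[d]})}$ produced there coincides with $\Q^{[d]}$. Once both are in hand, the conclusions of Proposition \ref{minimal}, namely that $(N,\G^{[d]})$ is minimal and that $\F^{[d]}$-minimal points are dense in $N$, become exactly the two assertions of the corollary.

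For the first point, I would observe that the diagonal embedding $\iota:X\rightarrow X^{[d]}$, $x\mapsto x^{[d]}$, is a homeomorphism of $X$ onto $\D^{[d]}$ which intertwines $T$ with $T^{[d]}$, since $T^{[d]}x^{[d]}=(Tx)^{[d]}$. Thus $(\D^{[d]},T^{[d]})$ is topologically conjugate to $(X,T)$, and the minimality of $(X,T)$ immediately gives that $\D^{[d]}$ is $T^{[d]}$-minimal. This verifies the hypothesis of Proposition \ref{minimal} with $A=\D^{[d]}$.

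For the second point, I would invoke the characterization recorded earlier in the paper, namely that $\Q^{[d]}$ is the closure in $X^{[d]}$ of $\{Sx^{[d]}:S\in\F^{[d]},\ x\in X\}$. Since $\bigcup\{S\D^{[d]}:S\in\F^{[d]}\}=\{Sx^{[d]}:S\in\F^{[d]},\ x\in X\}$, taking closures yields $N=\overline{\O(\D^{[d]},\F^{[d]})}=\Q^{[d]}$. Substituting $N=\Q^{[d]}$ into the conclusion of Proposition \ref{minimal} then finishes the argument.

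I do not expect a genuine obstacle here, as the corollary is a formal specialization of the preceding proposition; the only substantive steps are the conjugacy used to establish $T^{[d]}$-minimality of $\D^{[d]}$ and the identification $N=\Q^{[d]}$, both of which are routine given the definitions and the stated description of $\Q^{[d]}$.
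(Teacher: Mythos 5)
Your proof is correct and is exactly the paper's argument: the paper proves this corollary by the single line ``Setting $A=\Delta^{[d]}$'' in Proposition \ref{minimal}, and your two verifications (that $\D^{[d]}$ is $T^{[d]}$-minimal via the conjugacy $x\mapsto x^{[d]}$, and that $\overline{\O(\D^{[d]},\F^{[d]})}=\Q^{[d]}$ from the stated description of $\Q^{[d]}$) are precisely the routine details the paper leaves implicit.
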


\section{Proof of Theorem \ref{main1}-(2)}\label{section-RP-equi}

In this section we prove Theorem \ref{main1}-(2). That is, we show
that $(\overline{\F^{[d]}}(x^{[d]}), \F^{[d]})$ is the unique
$\F^{[d]}$-minimal subset in $\Q^{[d]}[x]$ for all $x\in X$.

\subsection{A useful lemma}
The following lemma is a key step to show the uniqueness of minimal
sets in $\Q^{[d]}[x]$ for $x\in X$. Unlike the case when $(X,T)$ is
minimal distal, we need to use the enveloping semigroup theory.
\begin{lem}\label{host-trick}
Let $(X, T)$ be a minimal system and let $d \ge 1$ be an integer. If
$(x^{[d-1]}, {\bf w})\in \Q^{[d]}(X)$ for some ${\bf w}\in
X^{[d-1]}$ and it is $\F^{[d]}$-minimal, then
$$(x^{[d-1]}, {\bf w})\in \overline{\F^{[d]}}(x^{[d]}).$$
\end{lem}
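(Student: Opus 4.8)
The plan is to prove the equivalent assertion
$$x^{[d]}\in \overline{\F^{[d]}}(x^{[d-1]},{\bf w})$$
and then read off the lemma by minimality. By hypothesis $(x^{[d-1]},{\bf w})$ is $\F^{[d]}$-minimal, and by Theorem \ref{main1}-(1) the diagonal point $x^{[d]}$ is $\F^{[d]}$-minimal as well. Hence once $x^{[d]}\in \overline{\F^{[d]}}(x^{[d-1]},{\bf w})$ is shown, we get $\overline{\F^{[d]}}(x^{[d]})\subseteq \overline{\F^{[d]}}(x^{[d-1]},{\bf w})$; two minimal sets with one contained in the other must coincide, so $(x^{[d-1]},{\bf w})\in \overline{\F^{[d]}}(x^{[d]})$, which is what we want. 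Thus it suffices to reach $x^{[d]}$ starting from the given point.

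First I would invoke the minimality of $(\Q^{[d]},\G^{[d]})$ (Corollary \ref{Q-is-minimal}). Since both $(x^{[d-1]},{\bf w})$ and $x^{[d]}$ lie in $\Q^{[d]}$, there is a net $G_\alpha\in \G^{[d]}$ with $G_\alpha(x^{[d-1]},{\bf w})\to x^{[d]}$. Writing each element uniquely as $G_\alpha=(T^{[d]})^{n_\alpha}F_\alpha$ with $F_\alpha\in \F^{[d]}$ (using that $\F^{[d]}=\{S\in\G^{[d]}:S_\emptyset=\id\}$ is normal with $\G^{[d]}/\F^{[d]}\cong \Z$), and passing to a subnet so that $F_\alpha(x^{[d-1]},{\bf w})\to {\bf q}$, I obtain a point ${\bf q}\in \overline{\F^{[d]}}(x^{[d-1]},{\bf w})$ with $q_\emptyset=x$. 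The goal of this step — the ``switch from a cubic point to a face point'' — is to arrange that ${\bf q}$ is proximal to $x^{[d]}$ under the diagonal transformation $T^{[d]}$, i.e. that $p^{[d]}{\bf q}=x^{[d]}=p^{[d]}x^{[d]}$ for a suitable minimal idempotent $p\in E(X,T)$ with $px=x$. To control the limit I would not manipulate the raw net but work inside $E(\Q^{[d]},\G^{[d]})$ with minimal idempotents: choosing $p\in E(X,T)$ with $px=x$, the diagonal element $p^{[d]}$ is, by the proof of Proposition \ref{minimal}, a minimal idempotent of $E(\Q^{[d]},\G^{[d]})$ and it fixes $x^{[d]}$. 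Exploiting that $p^{[d]}$ commutes with every honest face transformation $K\in\F^{[d]}$ — a commutation that does pass to the enveloping semigroup because $K$ is a genuine homeomorphism — I would try to carry the relation $x^{[d]}=\lim (T^{[d]})^{n_\alpha}F_\alpha(x^{[d-1]},{\bf w})$ through $p^{[d]}$ so as to identify $p^{[d]}{\bf q}$ with $x^{[d]}$, certifying the proximality.

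Once ${\bf q}\in \overline{\F^{[d]}}(x^{[d-1]},{\bf w})$ is known to be proximal to the $T^{[d]}$-minimal point $x^{[d]}$, I would finish exactly as in Proposition \ref{proximal-extension1}: for any neighbourhood ${\bf U}$ of $x^{[d]}$ the set $N_{T^{[d]}}({\bf q},{\bf U})=\{n\in\Z:(T^{[d]})^n{\bf q}\in {\bf U}\}$ is a central set, hence contains an IP-set $FS(\{p_i\}_{i=1}^\infty)$; taking ${\bf p}=(p_1,\ldots,p_d)$ and using $FS(\{p_i\}_{i=1}^d)\subseteq N_{T^{[d]}}({\bf q},{\bf U})$, each ${\bf p}\cdot\ep$ with $\ep\neq\emptyset$ lies in the return-time set, while the coordinate $\ep=\emptyset$ is harmless since $q_\emptyset=x$. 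Because the face transformation $(T^{[d]}_1)^{p_1}\cdots(T^{[d]}_d)^{p_d}$ acts on the $\ep$-coordinate precisely by $T^{{\bf p}\cdot\ep}$, this yields $(T^{[d]}_1)^{p_1}\cdots(T^{[d]}_d)^{p_d}{\bf q}\in {\bf U}$. As ${\bf U}$ is arbitrary we conclude $x^{[d]}\in \overline{\F^{[d]}}({\bf q})\subseteq \overline{\F^{[d]}}(x^{[d-1]},{\bf w})$, completing the proof.

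I expect the production of the proximal point ${\bf q}$ to be the main obstacle, and this is exactly where the enveloping semigroup theory is forced upon us. The difficulty is that right multiplication in $E(\Q^{[d]},\G^{[d]})$ is discontinuous, so one cannot naively factor $\lim (T^{[d]})^{n_\alpha}F_\alpha(x^{[d-1]},{\bf w})$ as ``$p^{[d]}$ applied to $\lim F_\alpha(x^{[d-1]},{\bf w})$''; the commutation above lets one move $p^{[d]}$ past honest face transformations but not past the limit idempotent $\lim F_\alpha$. Making the identification $p^{[d]}{\bf q}=x^{[d]}$ rigorous will require careful bookkeeping with minimal idempotents and minimal left ideals in $E(\Q^{[d]},\G^{[d]})$, together with the identification of diagonal minimal idempotents with $\G^{[d]}$-minimal ones from Proposition \ref{minimal}. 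This is precisely the feature that distinguishes the present general minimal case from the distal case of \cite{HKM}, where openness of the relevant extension makes such a switch automatic.
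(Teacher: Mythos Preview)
Your reduction is sound: once Theorem~\ref{main1}-(1) is available, showing $x^{[d]}\in\overline{\F^{[d]}}(x^{[d-1]},{\bf w})$ is equivalent to the lemma. Likewise, your closing IP-set step is correct \emph{provided} the proximality $({\bf q},x^{[d]})\in{\bf P}(X^{[d]},T^{[d]})$ has been established. The gap you flag, however, is genuine and is not filled by the mechanism you propose. Commuting $p^{[d]}$ past each individual $F_\alpha$ is fine, but the obstruction is exactly that right multiplication in $E(\Q^{[d]},\G^{[d]})$ is discontinuous, so nothing lets you conclude $p^{[d]}{\bf q}=x^{[d]}$ from $\lim(T^{[d]})^{n_\alpha}F_\alpha(x^{[d-1]},{\bf w})=x^{[d]}$ and $\lim F_\alpha(x^{[d-1]},{\bf w})={\bf q}$. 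There is no factorisation $E(\Q^{[d]},\G^{[d]})=E(\Q^{[d]},T^{[d]})\cdot E(\Q^{[d]},\F^{[d]})$ in general, and the identification of diagonal $T^{[d]}$-idempotents with $\G^{[d]}$-minimal idempotents from Proposition~\ref{minimal} does not supply one.

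The paper circumvents this entirely and does \emph{not} pass through $T^{[d]}$-proximality or an IP argument. The key structural observation you are missing is the asymmetry between the two projections $\pi_1,\pi_2:X^{[d]}\to X^{[d-1]}$ onto the first and last $2^{d-1}$ coordinates: under $\pi_1$ the face group $\F^{[d]}$ acts as $\F^{[d-1]}$, but under $\pi_2$ it acts as the \emph{full} group $\G^{[d-1]}$ (because $T^{[d]}_d\mapsto T^{[d-1]}$). This lets one work inside a single minimal left ideal ${\bf L}\subset E(X^{[d]},\F^{[d]})$. One chooses $p_k\in{\bf L}$ with $\pi_2^*(p_k)x^{[d-1]}={\bf a_k}\to x^{[d-1]}$, passes to $p=\lim p_k$, and observes that $p$ lies in the Ellis group $F=\{\alpha\in v{\bf L}:\pi_2^*(\alpha)x^{[d-1]}=x^{[d-1]}\}$ for a suitable idempotent $v\in J({\bf L})$. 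Since $F$ is a group, $pFx^{[d]}=Fx^{[d]}$, which produces points ${\bf x_k}=({\bf b_k},{\bf a_k})\in\overline{\F^{[d]}}(x^{[d]})$ with ${\bf b_k}\to\pi_1^*(v)x^{[d-1]}$. Applying $(T^{[d]}_d)^{n_k}$ yields $(\pi_1^*(v)x^{[d-1]},{\bf w})\in\overline{\F^{[d]}}(x^{[d]})$; finally, the $\F^{[d]}$-minimality of $(x^{[d-1]},{\bf w})$ furnishes an idempotent $u\in J({\bf L})$ fixing it, and the algebraic relation $uv=u$ in ${\bf L}$ gives $u(\pi_1^*(v)x^{[d-1]},{\bf w})=(x^{[d-1]},{\bf w})$. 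Thus the paper works from $x^{[d]}$ toward $(x^{[d-1]},{\bf w})$ directly, exploiting the $\pi_1/\pi_2$ asymmetry and Ellis-group algebra rather than attempting any proximality-then-IP detour.
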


\begin{proof}
Since $(x^{[d-1]}, {\bf w})\in \Q^{[d]}(X)$ and $(\Q^{[d]},
\G^{[d]})$ is a minimal system by Corollary \ref{Q-is-minimal},
$(x^{[d-1]},{\bf w})$ is in the $\G^{[d]}$-orbit closure of
$x^{[d]}$, i.e. there are sequences $\{n_k\}_k, \{n_k^1\}_k, \ldots,
\{n_k^d\}_k\subseteq \Z$ such that
$$(T^{[d]}_d)^{n_k}(T^{[d]}_1)^{n^1_k}\ldots
(T^{[d]}_{d-1})^{n^{d-1}_k}
(T^{[d]})^{n^d_k}(x^{[d-1]},x^{[d-1]})\to (x^{[d-1]}, {\bf w}), \
k\to \infty.$$ Let $${\bf a_k}= (T^{[d-1]}_1)^{n^1_k}\ldots
(T^{[d-1]}_{d-1})^{n^{d-1}_k} (T^{[d-1]})^{n^d_k}(x^{[d-1]}),$$ then
the above limit can be rewritten as
\begin{equation}\label{q1}
(T^{[d]}_d)^{n_k}({\bf a_k},{\bf a_k})=({\rm id}^{[d-1]}\times
T^{[d-1]})^{n_k}({\bf a_k},{\bf a_k})\to (x^{[d-1]}, {\bf w}), \
k\to \infty.
\end{equation}

Let
\begin{equation*}
    \pi_1: (X^{[d]}, \F^{[d]})\rightarrow (X^{[d-1]},
    \F^{[d]}), \quad ({\bf x'},{\bf x''})\mapsto {\bf x'},
\end{equation*}
\begin{equation*}
    \pi_2: (X^{[d]}, \F^{[d]})\rightarrow (X^{[d-1]},
    \F^{[d]}), \quad ({\bf x'},{\bf x''})\mapsto {\bf x''},
\end{equation*}
be projections to the first $2^{d-1}$ coordinates and last $2^{d-1}$
coordinates respectively. For $\pi_1$ we consider the action of the
group $\F^{[d]}$ on $X^{[d-1]}$ via the representation
$T_i^{[d]}\mapsto T^{[d-1]}_i$ for $1\le i\le d-1$ and
$T^{[d]}_d\mapsto \id^{[d-1]}$. For $\pi_2$ we consider the action
of the group $\F^{[d]}$ on $X^{[d-1]}$ via the representation
$T_i^{[d]}\mapsto T^{[d-1]}_i$ for $1\le i\le d-1$ and
$T^{[d]}_d\mapsto T^{[d-1]}$.

Denote the corresponding semigroup homomorphisms of enveloping
semigroups by
\begin{equation*}
    \pi_1^*: E(X^{[d]}, \F^{[d]})\rightarrow E(X^{[d-1]},
    \F^{[d]}), \quad\pi_2^*: E(X^{[d]}, \F^{[d]})\rightarrow E(X^{[d-1]},
    \F^{[d]}).
\end{equation*}
Notice that for this action of $\F^{[d]}$ on $X^{[d-1]}$ clearly
$$\pi^*_1(E(X^{[d]},\F^{[d]}))=E(X^{[d-1]},\F^{[d-1]})\ \text{and}\ \pi^*_2(E(X^{[d]},\F^{[d]}))=E(X^{[d-1]},\G^{[d-1]})$$ as subsets of
$(X^{[d-1]})^{X^{[d-1]}}$. Thus for any $p\in E(X^{[d]},\F^{[d]})$
and ${\bf x}\in X^{[d]}$, we have
$$p  {\bf x}= p({\bf x'},{\bf x''})=(\pi^*_1(p){\bf x'}, \pi^*_2(p) {\bf x''}).$$

Now fix a minimal left ideal ${\bf L}$ of $E(X^{[d]}, \F^{[d]})$. By
(\ref{q1}), ${\bf a_k}\to x^{[d-1]}, k\to \infty$. Since
$(\Q^{[d-1]}(X), \G^{[d-1]})$ is minimal, there exists $p_k\in {\bf
L}$ such that ${\bf a_k}=\pi^*_2(p_k) x^{[d-1]}$. Without loss of
generality, we assume that $p_k\to p\in {\bf L}$. Then
$$\pi^*_2(p_k) x^{[d-1]}={\bf a_k}\to x^{[d-1]}\ \text{and}\
\pi^*_2(p_k)x^{[d-1]}\to \pi^*_2(p) x^{[d-1]}.$$ Hence
\begin{equation}\label{q2}
    \pi^*_2(p) x^{[d-1]}=x^{[d-1]}.
\end{equation}

Since ${\bf L}$ is a minimal left ideal and $p\in {\bf L}$, by
Proposition \ref{left-ideal} there exists a minimal idempotent $v\in
J({\bf L})$ such that $v p=p$. Then we have
$$\pi^*_2(v)x^{[d-1]}=\pi^*_2(v)\pi^*_2(p)x^{[d-1]}=\pi^*_2(vp)x^{[d-1]}
=\pi^*_2(p)x^{[d-1]}=x^{[d-1]}.$$ Let
$$F=\mathfrak{G}(\overline{\F^{[d-1]}}(x^{[d-1]}), x^{[d-1]})=\{\a\in
v{\bf L}: \pi^*_2(\a) x^{[d-1]}=x^{[d-1]}\}$$ be the Ellis group.
Then $ F$ is a subgroup of the group $v{\bf L}$. By (\ref{q2}), we
have that $p\in F$.

Since $F$ is a group and $p\in F$. We have
\begin{equation}\label{q3}
    pFx^{[d]}=Fx^{[d]}\subset \pi_2^{-1}(x^{[d-1]}).
\end{equation}
Since $vx^{[d]}\in Fx^{[d]}$, there is some ${\bf x_0}\in F x^{[d]}$
such that $v x^{[d]}=p {\bf x_0}$. Set ${\bf x_k}=p_k {\bf x_0}$.
Then
$${\bf x_k}=p_k {\bf x_0}\to p { \bf x_0}=v x^{[d]}=(\pi^*_1(v)x^{[d-1]},
x^{[d-1]}),\ k\to \infty,$$ and $$\pi_2({\bf x_k})=\pi_2(p_k {\bf
x_0})=\pi^*_2(p_k) x^{[d-1]}= {\bf a_k} \to x^{[d-1]}, \ k\to
\infty.$$ Let ${\bf x_k}=({\bf b_k}, {\bf a_k})\in
\overline{\F^{[d]}}(x^{[d]})$. Then $\lim_k {\bf
b_k}=\pi^*_1(v)x^{[d-1]}$.

By (\ref{q1}), we have $(T^{[d-1]})^{n_k}{\bf a_k}\to {\bf w}, k\to
\infty$. Hence
\begin{equation}\label{}
    ({\rm id}^{[d-1]}\times T^{[d-1]})^{n_k}({\bf b_k},{\bf a_k})=
    ({\bf b_k}, (T^{[d-1]})^{n_k}{\bf a_k})\to
    (\pi^*_1(v)x^{[d-1]}, {\bf w}), \ k\to \infty.
\end{equation}
Since ${\rm id}^{[d-1]}\times T^{[d-1]}=T^{[d]}_d\in \F^{[d]}$ and
$({\bf b_k}, {\bf a_k})\in \overline{\F^{[d]}}(x^{[d]})$, we have
\begin{equation}\label{q4}
(\pi^*_1(v)x^{[d-1]}, {\bf w})\in \overline{\F^{[d]}}(x^{[d]}).
\end{equation}
Since $(x^{[d-1]},{\bf w})$ is $\F^{[d]}$ minimal by assumption, by
Proposition \ref{minimal-point} there is some minimal idempotent
$u\in J({\bf L})$ such that
$$u(x^{[d-1]},{\bf w})=(\pi^*_1(u)x^{[d-1]},\pi^*_2(u){\bf w})=(x^{[d-1]},{\bf w}).$$
Since $u,v\in {\bf L}$ are minimal idempotents in the same minimal
left ideal ${\bf L}$, we have $uv=u$ by Proposition
\ref{left-ideal}. Thus
\begin{eqnarray*}
u(\pi^*_1(v)x^{[d-1]},{\bf
w})&=&(\pi^*_1(u)\pi^*_1(v)x^{[d-1]},\pi^*_2(u){\bf w})\\
&=&(\pi^*_1(uv)x^{[d-1]},{\bf w})=(\pi^*_1(u)x^{[d-1]},{\bf
w})=(x^{[d-1]},{\bf w}).
\end{eqnarray*}
By (\ref{q4}), we have
$$(x^{[d-1]}, {\bf w})\in \overline{\F^{[d]}}(x^{[d]}).$$
The proof is completed.
\end{proof}

\subsection{Proof of Theorem \ref{main1}-(2)}

Let $(X, T)$ be a system and $x\in X$. Recall
$$\Q^{[d]}[x]=\{{\bf z}\in \Q^{[d]}(X): z_\emptyset =x \}.$$
With the help of Lemma \ref{host-trick} we have

\begin{prop}\label{unique-minimal-set}
Let $(X, T)$ be a minimal system and let $d \ge 1$ be an integer. If
${\bf x}\in \Q^{[d]}[x]$, then
$$x^{[d]}\in \overline{\F^{[d]}}({\bf x}).$$
Especially, $(\overline{\F^{[d]}}(x^{[d]}), \F^{[d]})$ is the unique
$\F^{[d]}$-minimal subset in $\Q^{[d]}[x]$.
\end{prop}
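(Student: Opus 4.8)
The plan is to prove the first assertion by induction on $d$ and then to read off the uniqueness statement as a formal consequence. For $d=1$, any ${\bf x}\in\Q^{[1]}[x]$ has the form $(x,x')$, and since $X$ is minimal $\overline{\F^{[1]}}({\bf x})=\{x\}\times\overline{\O(x',T)}=\{x\}\times X\ni (x,x)=x^{[1]}$, so the base case holds. For the inductive step I assume the statement for $d-1$ and take ${\bf x}=({\bf x}',{\bf x}'')\in\Q^{[d]}[x]$; projecting onto the first $2^{d-1}$ coordinates shows ${\bf x}'\in\Q^{[d-1]}[x]$. The first move is to push the first half down to $x^{[d-1]}$: by the induction hypothesis $x^{[d-1]}\in\overline{\F^{[d-1]}}({\bf x}')$, so there are $G_k\in\F^{[d-1]}$ with $G_k{\bf x}'\to x^{[d-1]}$. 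Since $T^{[d]}_j=T^{[d-1]}_j\times T^{[d-1]}_j$ for $j<d$, the subgroup of $\F^{[d]}$ they generate is $\{G\times G: G\in\F^{[d-1]}\}$, so applying $G_k\times G_k$ and passing to a subsequence in which $G_k{\bf x}''$ converges gives
$$(G_k\times G_k){\bf x}=(G_k{\bf x}',G_k{\bf x}'')\to (x^{[d-1]},{\bf w})$$
for some ${\bf w}\in X^{[d-1]}$. As $\Q^{[d]}$ is closed and $\F^{[d]}$-invariant, $(x^{[d-1]},{\bf w})\in\overline{\F^{[d]}}({\bf x})\cap\Q^{[d]}$.

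Next I would upgrade this to an $\F^{[d]}$-\emph{minimal} point of the same special shape, so that Lemma \ref{host-trick} can be invoked. Pick an $\F^{[d]}$-minimal point ${\bf z}=({\bf z}',{\bf z}'')$ in $\overline{\F^{[d]}}((x^{[d-1]},{\bf w}))$. With the projection $\pi_1$ onto the first $2^{d-1}$ coordinates, equipped with the action $T^{[d]}_j\mapsto T^{[d-1]}_j$ for $j<d$ and $T^{[d]}_d\mapsto\id^{[d-1]}$, the map $\pi_1$ is a factor map whose image of $\overline{\F^{[d]}}((x^{[d-1]},{\bf w}))$ is $\overline{\F^{[d-1]}}(x^{[d-1]})$, and the latter is minimal by Theorem \ref{main1}-(1). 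Hence ${\bf z}'$ is $\F^{[d-1]}$-minimal with $x^{[d-1]}\in\overline{\F^{[d-1]}}({\bf z}')$, so there are $H_k\in\F^{[d-1]}$ with $H_k{\bf z}'\to x^{[d-1]}$; applying $H_k\times H_k$ and passing to a subsequence yields $(x^{[d-1]},{\bf w}')\in\overline{\F^{[d]}}({\bf z})$ for some ${\bf w}'$. Because $\overline{\F^{[d]}}({\bf z})$ is a minimal set, the point $(x^{[d-1]},{\bf w}')$ is itself $\F^{[d]}$-minimal.

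Now Lemma \ref{host-trick} applies to $(x^{[d-1]},{\bf w}')$ and gives $(x^{[d-1]},{\bf w}')\in\overline{\F^{[d]}}(x^{[d]})$. Since $\overline{\F^{[d]}}(x^{[d]})$ is minimal by Theorem \ref{main1}-(1) and contains the $\F^{[d]}$-minimal point $(x^{[d-1]},{\bf w}')$, the two minimal sets coincide, so $x^{[d]}\in\overline{\F^{[d]}}((x^{[d-1]},{\bf w}'))$. Tracing back, $(x^{[d-1]},{\bf w}')\in\overline{\F^{[d]}}((x^{[d-1]},{\bf w}))\subseteq\overline{\F^{[d]}}({\bf x})$, whence $x^{[d]}\in\overline{\F^{[d]}}({\bf x})$, completing the induction. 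For uniqueness, note that $\Q^{[d]}[x]$ is closed and $\F^{[d]}$-invariant because every $S\in\F^{[d]}$ fixes the $\emptyset$-coordinate; so if $M\subseteq\Q^{[d]}[x]$ is $\F^{[d]}$-minimal and ${\bf x}\in M$, then $x^{[d]}\in\overline{\F^{[d]}}({\bf x})=M$, and as $\overline{\F^{[d]}}(x^{[d]})$ is minimal this forces $M=\overline{\F^{[d]}}(x^{[d]})$.

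I expect the middle step to be the real obstacle. A priori an $\F^{[d]}$-minimal point in $\overline{\F^{[d]}}((x^{[d-1]},{\bf w}))$ need not keep $x^{[d-1]}$ in its first half, whereas Lemma \ref{host-trick} only sees points whose first $2^{d-1}$ coordinates are all equal to $x$. The delicate part is thus to combine $\F^{[d]}$-minimality of ${\bf z}$ with the $\F^{[d-1]}$-minimality of $x^{[d-1]}$ — through the equivariant projection $\pi_1$ and the diagonal face maps $G\times G$ — so as to return to a point of the required special form without leaving the minimal set. Once this reduction is in place, Lemma \ref{host-trick} and the comparison of minimal sets do the remaining work.
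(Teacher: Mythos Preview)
Your proof is correct and follows the same inductive strategy as the paper, invoking Lemma~\ref{host-trick} and Theorem~\ref{main1}-(1) at the key moments. The one difference is in the order of operations: you first apply the induction hypothesis to reach $(x^{[d-1]},{\bf w})$, then pass to an $\F^{[d]}$-minimal point ${\bf z}$ in its orbit closure, and then have to work (your ``middle step'') to restore the first half to $x^{[d-1]}$ via the equivariant projection $\pi_1$ and the maps $H_k\times H_k$. The paper sidesteps this entirely by replacing ${\bf x}$ with an $\F^{[d]}$-minimal point in $\overline{\F^{[d]}}({\bf x})$ \emph{before} applying the induction hypothesis; since every element of $\F^{[d]}$ fixes the $\emptyset$-coordinate, this replacement stays in $\Q^{[d]}[x]$, and the point $(x^{[d-1]},{\bf w})$ produced by the diagonal face maps is then automatically $\F^{[d]}$-minimal, so Lemma~\ref{host-trick} applies directly. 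Your detour is sound, but the paper's reordering dissolves what you correctly identified as the delicate part of the argument.
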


\begin{proof}
It is sufficient to show the following claim:

\medskip

\noindent {\bf S(d):} {\em If ${\bf x}\in \Q^{[d]}[x]$, then there
exists a sequence $F_k\in \F^{[d]}$ such that $F_k ( {\bf x})\to
x^{[d]}$. }

\medskip

The case {\bf S(1)} is trivial. To make the idea clearer, we show
the case when $d=2$. Let $(x,a,b,c)\in \Q^{[2]}(X)$. We may assume
that $(x,a,b,c)$ is $\F^{[2]}$-minimal, or we replace it by some
$\F^{[2]}$-minimal point in its $\F^{[2]}$ orbit closure. Since
$(X,T)$ is minimal, there is a sequence $\{n_k\}\subset \Z$ such
that $T^{n_k}a\to x$. Without loss of generality we assume
$T^{n_k}c\to c'$. Then we have
$$(T^{[2]}_1)^{n_k}(x,a,b,c)=(\id\times T\times \id
\times T)^{n_k}(x,a,b,c)\to (x,x,b,c'), \ k\to \infty.$$ Since
$(x,a,b,c)$ is $\F^{[2]}$-minimal, $(x,x,b,c')$ is also
$\F^{[2]}$-minimal. By Lemma \ref{host-trick}, $(x,x,b,c')\in
\overline{\F^{[2]}}(x^{[2]})$. Together with ${\rm id}\times T\times
{\rm id}\times T  = T^{[2]}_1 \in \F^{[2]}$ and the minimality of
the system $(\overline{\F^{[2]}}(x^{[2]}),\F^{[2]})$ (Theorem
\ref{main1}-(1)), it is easy to see there exists a sequence $F_k\in
\F^{[2]}$ such that $F_k(x,a,b,c)\to x^{[2]}$. Hence we have {\bf
S(2)}.

Now we assume {\bf S(d)} holds for $d\ge 1$. Let ${\bf x}\in
\Q^{[d+1]}[x]$. We may assume that ${\bf x}$ is
$\F^{[d+1]}$-minimal, or we replace it by some $\F^{[d+1]}$-minimal
point in its $\F^{[d+1]}$-orbit closure. Let ${\bf x}=({\bf x'},
{\bf x''})$, where ${\bf x'}, {\bf x''}\in X^{[d]}$. Then ${\bf
x'}\in \Q^{[d]}[x]$. By {\bf S(d)}, there is a sequence $F_k\in
\F^{[d]}$ such that $F_k {\bf x'}\to x^{[d]}$. Without loss of
generality, we assume that $F_k{\bf x''}\to {\bf w}, k\to \infty$.
Then
$$(F_k\times F_k){\bf x}=(F_k\times F_k) ({\bf
x'}, {\bf x''})\to (x^{[d]},{\bf w})\in \Q^{[d+1]}(X), \ k\to
\infty.$$ Since $F_k\times F_k\in \F^{[d+1]}$ and ${\bf x}$ is
$\F^{[d+1]}$-minimal, $(x^{[d]}, {\bf w})$ is also
$\F^{[d+1]}$-minimal. By Lemma \ref{host-trick}, $(x^{[d]},{\bf
w})\in \overline{\F^{[d+1]}}(x^{[d+1]})$. Since
$(\overline{\F^{[d+1]}}(x^{[d+1]}),\F^{[d+1]})$ is minimal by
Theorem \ref{main1}-(1), we have $x^{[d+1]}$ is in the
$\F^{[d+1]}$-orbit closure of ${\bf x}$. Hence we have {\bf S(d+1)},
and the proof of claim is completed.

Since $x^{[d]}\in \overline{\F^{[d]}}({\bf x})$ for all ${\bf x}\in
\Q^{[d]}[x]$ and $(\overline{\F^{[d]}}(x^{[d]}), \F^{[d]})$ is
minimal, it is easy to see that $(\overline{\F^{[d]}}(x^{[d]}),
\F^{[d]})$ intersects all $\F^{[d]}$-minimal sets in $\Q^{[d]}[x]$
and hence it is the unique $\F^{[d]}$-minimal set in $\Q^{[d]}[x]$.
The proof is completed.
\end{proof}

\section{Lifting $\RP^{[d]}$ from factors to extensions}\label{section-lift}

In this section, first we give some equivalent conditions for
$\RP^{[d]}$, and give the proof of Theorem \ref{lifting}-(2), i.e.
lifting $\RP^{[d]}$ from factors to extensions.

\subsection{Equivalent conditions for $\RP^{[d]}$}

In this subsection we collect some equivalent conditions for
$\RP^{[d]}$.

\begin{prop}\label{prop-R}
Let $(X, T)$ be a minimal system and $d\in \N$. Then the following
conditions are equivalent:
\begin{enumerate}
\item $(x,y)\in \RP^{[d]}$;
\item $(x,y,y,\ldots,y)=(x,y^{[d+1]}_*) \in
\overline{\F^{[d+1]}}(x^{[d+1]});$
\item $(x,x^{[d]}_*, y, x^{[d]}_*) \in
\overline{\F^{[d+1]}}(x^{[d+1]})$.
\end{enumerate}
\end{prop}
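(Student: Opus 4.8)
The plan is to prove Proposition \ref{prop-R} by establishing the chain of equivalences $(1)\Leftrightarrow(2)$ using previously proved results, and then showing $(2)\Leftrightarrow(3)$ by a symmetry argument that exploits the minimality results in Theorem \ref{main1}. The equivalence $(1)\Leftrightarrow(2)$ is already contained in Theorem \ref{main2}, since condition (2) here is exactly condition (3) there. So the real content is $(2)\Leftrightarrow(3)$, where I must pass between the ``collapsed one coordinate'' picture $(x,y^{[d+1]}_*)$ and the ``split into two copies of $x^{[d]}$'' picture $(x, x^{[d]}_*, y, x^{[d]}_*)$, both living inside $\overline{\F^{[d+1]}}(x^{[d+1]})$.

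For $(3)\Rightarrow(2)$ I would start from $(x, x^{[d]}_*, y, x^{[d]}_*) \in \overline{\F^{[d+1]}}(x^{[d+1]})$ and apply a sequence of face transformations to collapse the second block. The key observation is that by Theorem \ref{main1}-(2), $(\overline{\F^{[d]}}(x^{[d]}),\F^{[d]})$ is the unique $\F^{[d]}$-minimal subset in $\Q^{[d]}[x]$ and $x^{[d]}$ is $\F^{[d]}$-minimal, so there is a sequence $F_k\in \F^{[d]}$ driving $(y, x^{[d]}_*)$ toward $y^{[d]}$ while I control where the first block goes. Applying $F_k\times F_k\in\F^{[d+1]}$ to the split point should send it to $(x, y^{[d+1]}_*)$ up to a subsequence, landing back in the same orbit closure. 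For the converse $(2)\Rightarrow(3)$ I would reverse the roles: starting from $(x,y^{[d+1]}_*)$, I use that $(x, x^{[d]}_*)$ and the relevant diagonal-type points are $\F^{[d]}$-minimal to move one block of $y$'s back to $x$'s, again via $F_k\times F_k$ and passing to a subsequence, then invoke the uniqueness of the minimal set in the appropriate fiber $\Q^{[d+1]}[x]$ to conclude membership in $\overline{\F^{[d+1]}}(x^{[d+1]})$.

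The main obstacle I anticipate is keeping careful track of which coordinates are being acted on when I embed $\F^{[d]}$ into $\F^{[d+1]}$ via the diagonal $F\mapsto F\times F$, and verifying that the limiting point genuinely lies in $\Q^{[d+1]}[x]$ so that I can apply the uniqueness statement in Theorem \ref{main1}-(2). Specifically, the limit of $F_k\times F_k$ applied to a point of $\Q^{[d+1]}$ stays in $\Q^{[d+1]}$ (since $\F^{[d+1]}$ preserves $\Q^{[d+1]}$ and it is closed), and its first coordinate is fixed at $x$ because the face group $\F^{[d+1]}$ acts trivially on the $\emptyset$ coordinate; but I must confirm the target point is $\F^{[d+1]}$-minimal so that uniqueness forces it into $\overline{\F^{[d+1]}}(x^{[d+1]})$. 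Here I would use that $x^{[d+1]}$ is $\F^{[d+1]}$-minimal (Theorem \ref{main1}-(1)) together with the fact that a point proximal or limit-related to a minimal point, once shown to lie in the single minimal set of its fiber, must coincide with the orbit closure.

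An alternative and perhaps cleaner route for $(2)\Leftrightarrow(3)$ would be to avoid recomputation entirely: since both (2) and (3) assert membership of $\F^{[d+1]}$-minimal points in $\overline{\F^{[d+1]}}(x^{[d+1]})$, and since that set is itself $\F^{[d+1]}$-minimal, it suffices to produce, for each direction, a single sequence in $\F^{[d+1]}$ carrying one candidate point to the other. I would realize this by combining the face transformation $T^{[d+1]}_{d+1}=\id^{[d]}\times T^{[d]}$ and the diagonally-embedded sequences from $\F^{[d]}$, exactly as in the proof of Theorem \ref{main2}, so that the whole argument reduces to two applications of Theorem \ref{main1} plus the already-established equivalence $(1)\Leftrightarrow(2)$. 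I expect the bookkeeping of the index sets $\ep\subset[d+1]$ versus $\ep\subset[d]$ to be the most error-prone part, but conceptually no new machinery beyond Theorem \ref{main1} and Lemma \ref{RPd-def} is required.
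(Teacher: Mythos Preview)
Your proposal is correct and rests on the same core mechanism as the paper: the embedding $F\mapsto F\times F$ of $\F^{[d]}$ into $\F^{[d+1]}$ together with Theorem~\ref{main1}. The organization differs slightly. The paper closes the cycle via $(3)\Rightarrow(1)$, which is a one-liner from Lemma~\ref{RPd-def} since $(x,x^{[d]}_*,y,x^{[d]}_*)$ already has the form $(x,{\bf a}_*,y,{\bf a}_*)$; it then proves $(2)\Rightarrow(3)$ by induction on $d$, using $\RP^{[d]}\subset\RP^{[d-1]}$ to obtain $(x,y^{[d]}_*)\in\overline{\F^{[d]}}(x^{[d]})$ and minimality to find $F_k$ with $F_k(x,y^{[d]}_*)\to x^{[d]}$, whence $F_k\times F_k$ carries $(x,y^{[d+1]}_*)$ to $(x,x^{[d]}_*,y,x^{[d]}_*)$. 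Your route avoids induction by invoking Proposition~\ref{unique-minimal-set} directly on $(x,y^{[d]}_*)\in\Q^{[d]}[x]$ (the first half of a $\Q^{[d+1]}$ point), which is arguably cleaner for $(2)\Rightarrow(3)$; the price is that your $(3)\Rightarrow(2)$ requires a parallel computation rather than the paper's instant appeal to Lemma~\ref{RPd-def}.

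Two small points. First, in your sketch of $(3)\Rightarrow(2)$ you invoke Theorem~\ref{main1}-(2) for $\Q^{[d]}[x]$, but the point you are driving is $(y,x^{[d]}_*)$, so the relevant fiber is $\Q^{[d]}[y]$; this is a typo, not a gap. Second, your anticipated obstacle about verifying $\F^{[d+1]}$-minimality of the limit point is not actually needed: in both directions you begin with a point already in $\overline{\F^{[d+1]}}(x^{[d+1]})$ and apply elements of $\F^{[d+1]}$, so the limit automatically lies in that closed invariant set. No appeal to uniqueness in $\Q^{[d+1]}[x]$ is required at the end.
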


\begin{proof}
By Theorem \ref{main2}, we have $(1)\Leftrightarrow(2)$. By Lemma
\ref{RPd-def} we have $(3)\Rightarrow (1)$. Now show
$(2)\Rightarrow(3)$.

If (2) holds, then $(x,y,y,\ldots,y)=(x,y^{[d+1]}_*) \in
\overline{\F^{[d+1]}}(x^{[d+1]})$ and $(x,y)\in \RP^{[d]}$. Since
$(x,y)\in \RP^{[d]}\subset \RP^{[d-1]}$, $(x,y^{[d]}_*)\in
\overline{\F^{[d]}}(x^{[d]})$. By Theorem \ref{main1},
$(\overline{\F^{[d]}}(x^{[d]}),\F^{[d]})$ is minimal. There is some
sequence $F_k\in\F^{[d]}$ such that $F_k(x, y^{[d]}_*)\to x^{[d]}, \
k\to \infty$. Then $$F_k\times F_k(x,y^{[d]}_*,y,y^{[d]}_*)\to
(x,x^{[d]}_*,y,x_*^{[d]}), \ k\to \infty.$$ Thus we have $(3)$, and
the proof is completed.
\end{proof}

\begin{lem}\label{RPd-lemma2}  Let $(X, T)$ be a minimal system. Then
$(x,y)\in \RP^{[d]}(X)$ if and only if $(x,x,\ldots,x,y) \in
\Q^{[d+1]}.$
\end{lem}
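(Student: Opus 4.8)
The plan is to reduce the statement to Theorem \ref{main2} by combining the symmetry of $\RP^{[d]}$ with the invariance of $\Q^{[d+1]}$ under Euclidean permutations. Note first that the target point $(x,x,\ldots,x,y)$ is the element of $X^{[d+1]}$ whose coordinate at the full vertex $\ep=[d+1]=(1,\ldots,1)$ equals $y$ and whose other $2^{d+1}-1$ coordinates all equal $x$; by contrast, Theorem \ref{main2} describes membership in $\Q^{[d+1]}$ of the ``complementary'' point $(x,y^{[d+1]}_*)=(x,y,\ldots,y)$, which carries $x$ at $\emptyset$ and $y$ at every other vertex. Since these two patterns have different numbers of $x$'s and $y$'s, no single permutation relates them directly, so the roles of $x$ and $y$ must be interchanged first.

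I would begin by recording the elementary fact that $\RP^{[d]}$ is a symmetric relation: the defining condition is invariant under swapping the approximating points $x'$ and $y'$, so $(x,y)\in\RP^{[d]}$ if and only if $(y,x)\in\RP^{[d]}$. Applying Theorem \ref{main2} to the pair $(y,x)$ then yields
$$(x,y)\in\RP^{[d]}\iff (y,x)\in\RP^{[d]}\iff (y,x,x,\ldots,x)=(y,x^{[d+1]}_*)\in\Q^{[d+1]},$$
where $(y,x^{[d+1]}_*)$ is the point carrying $y$ at $\emptyset$ and $x$ at all remaining vertices.

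The remaining step is to identify $(y,x^{[d+1]}_*)$ with $(x,\ldots,x,y)$ inside $\Q^{[d+1]}$. For this I would use the antipodal map $\sigma\colon\ep\mapsto\ep^{c}$ on $\{0,1\}^{d+1}$, given by $\sigma(\ep_1,\ldots,\ep_{d+1})=(1-\ep_1,\ldots,1-\ep_{d+1})$. This is the central inversion $v\mapsto(1,\ldots,1)-v$ of the Euclidean unit cube $[0,1]^{d+1}$ onto itself, hence a Euclidean isometry and therefore a Euclidean permutation of $X^{[d+1]}$. Being an involution that exchanges the vertices $\emptyset$ and $[d+1]$ while permuting all the remaining vertices among themselves, $\sigma$ sends the point with $y$ at $\emptyset$ and $x$ elsewhere to the point with $y$ at $[d+1]$ and $x$ elsewhere; that is, $\sigma\cdot(y,x^{[d+1]}_*)=(x,\ldots,x,y)$. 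Because $\Q^{[d+1]}$ is invariant under Euclidean permutations, we conclude $(y,x^{[d+1]}_*)\in\Q^{[d+1]}$ if and only if $(x,\ldots,x,y)\in\Q^{[d+1]}$, which together with the displayed chain of equivalences completes the argument.

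The only genuinely non-routine point is the coordinate bookkeeping around $\sigma$, so I would verify explicitly that $(\sigma\cdot{\bf w})_\ep=w_{\ep^c}$ takes the value $y$ exactly when $\ep=[d+1]$, and that the ``middle'' coordinates are permuted harmlessly precisely because they all share the common value $x$. Everything else is a direct invocation of Theorem \ref{main2} and of the Euclidean-permutation invariance of $\Q^{[d+1]}$; no enveloping-semigroup machinery is needed for this lemma.
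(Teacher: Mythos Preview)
Your argument is correct. Both your proof and the paper's rely on the same two ingredients---a characterization of $\RP^{[d]}$ via membership of a special point in $\Q^{[d+1]}$, together with the Euclidean-permutation invariance of $\Q^{[d+1]}$---so the underlying idea is the same. The execution differs slightly: the paper's proof goes through the point $(x^{[d]},y,x^{[d]}_*)$ (with $y$ sitting at the vertex $\{d+1\}$), invoking Proposition~\ref{prop-R}(3) for the forward direction and Lemma~\ref{RPd-def} for the converse, and then uses a Euclidean permutation moving the vertex $\{d+1\}$ to $[d+1]$. You instead first invoke the symmetry of $\RP^{[d]}$ to swap $x$ and $y$, apply Theorem~\ref{main2} to the pair $(y,x)$ to land on $(y,x^{[d+1]}_*)$ (with $y$ at $\emptyset$), and then use the antipodal map $\ep\mapsto\ep^c$ to move $y$ to $[d+1]$. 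Your route has the minor advantage of handling both implications in a single chain of equivalences; the paper's route avoids the preliminary $x\leftrightarrow y$ swap but splits into two separate arguments.
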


\begin{proof}
If $(x,y)\in \RP^{[d]}$, then by Proposition \ref{prop-R}, we have
$(x,x^{[d]}_*, y, x^{[d]}_*)$ $=$ $(x^{[d]}, y, x^{[d]}_*) $ $\in
\Q^{[d+1]} $. Since $\Q^{[d+1]}$ is invariant under the Euclidean
permutation of $X^{[d+1]}$, we have $(x,x,\ldots, x, y)\in
\Q^{[d+1]}$.

Conversely, assume that $(x,x,\ldots,x,y) \in \Q^{[d+1]}$. Since
$\Q^{[d+1]}$ is invariant under the Euclidean permutation of
$X^{[d+1]}$, we have $(x,x^{[d]}_*, y, x^{[d]}_*) \in \Q^{[d+1]}$.
This means that $(x,y)\in \RP^{[d]}$ by Lemma \ref{RPd-def}.
\end{proof}

\subsection{Lifting $\RP^{[d]}$ from factors to extensions}

In this section we will show Theorem \ref{lifting}-(2). First we
need a lemma.

\begin{lem}\label{proximal-lemma2}
Let $\pi: (X,T)\rightarrow (Y,T)$ be an extension of minimal
systems. If $(y_1,y_2)\in {\bf P}(Y,T)$ and $x_1\in \pi^{-1}(y_1)$
then there exists $x_2\in\pi^{-1}(y_2)$ such that $(x_1,x_2)\in {\bf
P}(X,T)$.
\end{lem}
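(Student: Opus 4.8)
The plan is to pass to the enveloping semigroup and work with minimal idempotents. Write $E(X,T)$ and $E(Y,T)$ for the enveloping semigroups and let $\pi^*: E(X,T)\to E(Y,T)$ be the induced continuous homomorphism, characterised by $\pi(p\,z)=\pi^*(p)\,\pi(z)$ for all $p\in E(X,T)$ and $z\in X$; it is surjective because $\pi$ is a factor map of minimal systems. Note first that since the conclusion demands $\pi\times\pi(x_1,x_2)=(y_1,y_2)$, it really asks for $x_2\in\pi^{-1}(y_2)$. My target is to produce a single minimal idempotent $u\in E(X,T)$ with $\pi^*(u)\,y_1=y_2$, and then simply set $x_2=u x_1$: indeed $\pi(x_2)=\pi^*(u)\,\pi(x_1)=\pi^*(u)\,y_1=y_2$, so $x_2\in\pi^{-1}(y_2)$, while $(x_1,u x_1)$ is proximal by Proposition \ref{proximal-Ellis} since $u$ is a minimal idempotent. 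Thus everything reduces to finding such a $u$.

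Downstairs I first manufacture the corresponding idempotent in $E(Y,T)$. Since $(y_1,y_2)\in{\bf P}(Y,T)$, the set $A=\{p\in E(Y,T): p\,y_1=p\,y_2\}$ is a nonempty closed left ideal, so it contains a minimal left ideal $L_Y$, and hence $p\,y_1=p\,y_2$ for every $p\in L_Y$. Because $Y$ is minimal, $L_Y\,y_2=Y\ni y_2$, so $\{p\in L_Y: p\,y_2=y_2\}$ is a nonempty closed subsemigroup of $E(Y,T)$ and therefore contains an idempotent $v$ (a closed subsemigroup of a compact right-topological semigroup carries an idempotent; see Appendix \ref{section-tds}). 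As $v\in L_Y\subseteq A$ we obtain $v\,y_1=v\,y_2=y_2$, so $v$ is a minimal idempotent with exactly the property $v\,y_1=y_2$.

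It remains to lift $v$ to a minimal idempotent of $E(X,T)$. The preimage $(\pi^*)^{-1}(L_Y)$ is a nonempty closed left ideal of $E(X,T)$, so it contains a minimal left ideal $L$; since $\pi^*$ is surjective, $\pi^*(L)$ is a nonempty left ideal contained in the minimal ideal $L_Y$, whence $\pi^*(L)=L_Y$. As $v\in L_Y=\pi^*(L)$ and $v$ is idempotent, the set $\{q\in L:\pi^*(q)=v\}$ is a nonempty closed subsemigroup of $E(X,T)$, hence contains an idempotent $u$; being an element of the minimal left ideal $L$, this $u$ is automatically a minimal idempotent (Proposition \ref{left-ideal}), and $\pi^*(u)=v$. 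Setting $x_2=u x_1$ then finishes the argument exactly as in the first paragraph.

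The main obstacle is this coordination of idempotents across the extension: one must arrange $\pi^*(u)=v$ with $u$ minimal, which is why $v$ is taken inside a minimal left ideal $L_Y\subseteq A$ and $u$ inside a minimal left ideal $L$ lying over it. The one genuinely clever point is choosing the downstairs idempotent $v$ so that it \emph{fixes} $y_2$ (forcing $v\,y_1=v\,y_2=y_2$): a naive choice of an idempotent fixing $y_1$ would instead drive $u x_1$ into the wrong fibre $\pi^{-1}(y_1)$. Everything else is routine use of the facts that closed subsemigroups contain idempotents and that idempotents of a minimal left ideal are minimal, together with Proposition \ref{proximal-Ellis} for the final proximality.
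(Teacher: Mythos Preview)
Your proof is correct and follows essentially the same strategy as the paper: obtain a minimal idempotent $u\in E(X,T)$ with $uy_1=y_2$ (via the induced action on $Y$) and set $x_2=ux_1$. The paper's proof is a two-line appeal to Proposition~\ref{proximal-Ellis}, implicitly using the convention that $E(X,T)$ acts on $Y$ through $\pi^*$; you have simply unpacked the construction of $u$ in detail by first finding $v\in E(Y,T)$ and then lifting it to a minimal idempotent of $E(X,T)$, and you correctly noted the typo $\pi^{-1}(y_1)\to\pi^{-1}(y_2)$ in the statement.
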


\begin{proof}
Since $(y_1,y_2)\in {\bf P}(Y,T)$, by Proposition
\ref{proximal-Ellis} there is an minimal idempotent $u\in E(X,T)$
such that $uy_1=uy_2=y_2$. Let $x_2=ux_1$, then $\pi(x_2)=uy_1=y_2$.
By Proposition \ref{proximal-Ellis} $(x_1,x_2)\in {\bf P}(X,T)$ and
$\pi\times \pi(x_1,x_2)=(y_1,y_2)$.
\end{proof}

\begin{thm}
Let $\pi:(X,T)\rightarrow (Y,T)$ be an extension of minimal systems.
If $(y_1,y_2)\in \RP^{[d]}(Y)$, then there is
$(z_1,z_2)\in\RP^{[d]}(X)$ such that $$\pi\times
\pi(z_1,z_2)=(y_1,y_2).$$
\end{thm}

\begin{proof}
First we claim that it is sufficient to show the result when
$(y_1,y_2)$ is a minimal point of $(Y\times Y, T\times T)$. As a
matter of fact, by Proposition \ref{proximal-Ellis} there is a
minimal point $(y_1',y_2')\in \overline{\O((y_1,y_2),T\times T)}$
such that $(y_1',y_2')$ is proximal to $(y_1,y_2)$. Now
$(y_1',y_2')$ is minimal and $(y_1',y_2')\in \RP^{[d]}(Y)$. If we
have the claim already, then there is $(x_1',x_2')\in \RP^{[d]}(X)$
with $\pi\times \pi(x_1',x_2')=(y_1',y_2')$. Since $(y_1,y_1'),
(y_2,y_2')\in {\bf P}(Y,T)$, then by Lemma \ref{proximal-lemma2}
there are $x_1,x_2\in X$ with $\pi\times \pi(x_1,x_2)=(y_1,y_2)$
such that $(x_1',x_1)$, $(x_2',x_2)$ $\in$ ${\bf P}(X,T)$. This
implies that $(x_1,x_2)\in \RP^{[d]}(X)$ by Theorem \ref{main3}.
Hence we have the result for general case.

So we may assume that $(y_1,y_2)$ is a minimal point of $(Y\times Y,
T\times T)$. To make the idea of the proof clearer, we show the case
for $d=1$ first (see Figure 1). Since $(y_1,y_2)\in \RP^{[1]}(Y)$,
by Proposition \ref{prop-R} $(y_1,y_1,y_2,y_1)\in
\overline{\F^{[2]}}(y_1^{[2]})$. So there is some sequence $F_k\in
\F^{[2]}$ such that $$F_k {y_1^{[2]}}\to (y_1, y_1,y_2,y_1),\ k\to
\infty .$$ Take a point $x_1\in \pi^{-1}(y_1)$. Without loss of
generality, we may assume that
\begin{equation*}
    F_k {x_1^{[2]}}\to (x_1,x_2,x_3,x_4), \ k\to \infty.
\end{equation*}
Then $\pi^{[2]}(x_1,x_2,x_3,x_4)=(y_1,y_1,y_2,y_1)$. Take
$\{n_k\}\subset \Z$ such that $T^{n_k}x_2\to x_1, k\to \infty$ and
assume that $T^{n_k}x_4\to x_4', k\to \infty$. Then
$$({\rm id}\times T\times {\rm id}\times T
)^{n_k}(x_1,x_2,x_3,x_4)\to (x_1,x_1,x_3,x_4'), \ k\to \infty .$$
Since ${\rm id}\times T\times {\rm id}\times T =T^{[2]}_1 \in
\F^{[2]}$, we have $(x_1,x_1,x_3,x_4')\in
\overline{\F^{[2]}}(x_1^{[2]}).$ Now take $\{m_k\}\subset \Z$ such
that $T^{m_k}x_3\to x_1, k\to \infty$ and assume that
$T^{m_k}x_4'\to x_4'', k\to \infty$. Then
$$({\rm id}\times {\rm id}\times T\times T
)^{m_k}(x_1,x_1,x_3,x_4')\to (x_1,x_1,x_1,x_4''), \ k\to \infty .$$
Since ${\rm id}\times {\rm id}\times T\times T =T^{[2]}_2 \in
\F^{[2]}$, we have $(x_1,x_1,x_1,x_4'')\in
\overline{\F^{[2]}}(x_1^{[2]}).$ By Lemma \ref{RPd-lemma2}
$(x_1,x_4'')\in \RP^{[1]}(X)$. Let $y_3=\pi(x_4'')$. Note that
$(x_1,x_4'')\in \overline{\O((x_3,x_4'),T\times T)}$, and we have
$(y_3,y_1)\in \overline{\O((y_1,y_2),T\times T)}$. Since $(y_1,y_2)$
is $T\times T$-minimal, there is a sequence $\{a_k\}\subset \Z$ such
that $(T\times T)^{a_k}(y_3,y_1)\to (y_1,y_2), k\to \infty$. Without
loss of generality, we may assume that there are $z_1,z_2\in X$ such
that
$$(T\times T)^{a_k}(x_4'',x_1)\to (z_1,z_2), \ k\to \infty$$
Since $(x_1,x_4'')\in \RP^{[1]}(X)$ and $\RP^{[1]}(X)$ is closed and
invariant, we have $(z_1,z_2)\in \overline{\O((x_4'',x_1),T\times
T)}\subset \RP^{[1]}(X)$. Note that
$$\pi\times \pi(z_1,z_2)=\lim_k(T\times T)^{a_k}(\pi(x_4''),\pi(x_1))=
\lim_k (T\times T)^{a_k}(y_3,y_1)=(y_1,y_2),$$ we are done for the
case $d=1$. For the proof when $d=2$, see Figure 2.

\medskip

\vspace{2mm}
\begin{center}

\psset{unit=0.9cm}

\begin{pspicture}(-1,-3.5)(11,6)

\psline[linewidth=0.5pt](2, 0.0)(2,5.5)

\psline[linewidth=0.5pt](3.5, 0.0)(3.5,5.5)

\psline[linewidth=0.5pt](0,-0.5)(4,-0.5)

\qdisk(2,-0.5){0.05}\put(1.75,-0.8){$y_1$}

\qdisk(3.5,-0.5){0.05}\put(3.25,-0.8){$y_2$}

\qdisk(2,1){0.05}\put(1.5,1){$x_4$}

\qdisk(2,2){0.05}\put(1.5,2){$x_4'$}

\qdisk(2,3){0.05}\put(1.5,3){$x_2$}

\qdisk(2,4){0.05}\put(1.5,4){$x_1$}

\qdisk(3.5,4){0.05}\put(3.75, 4){$x_3$}

\pscurve[linewidth=1pt]{->}(2.1, 3)(2.75,3.5)(2.1,4)

\pscurve[linewidth=1pt]{->}(2.1, 1)(2.75,1.5)(2.1,2)

\put(1,-1.5){$(x_1,x_2,x_3,x_4)$}

\put(2.25,-2){$\downarrow$}

\put(1,-2.5){$(x_1,x_1,x_3,x_4')$}


\psline[linewidth=0.5pt](7, 0.0)(7,5.5)

\psline[linewidth=0.5pt](8.5, 0.0)(8.5,5.5)

\psline[linewidth=0.5pt](10, 0.0)(10,5.5)

\psline[linewidth=0.5pt](6.5,-0.5)(11,-0.5)

\qdisk(8.5,-0.5){0.05} \put(8.25,-0.8){$y_1$}

\qdisk(10,-0.5){0.05}  \put(9.75,-0.8){$y_2$}

\qdisk(7,-0.5){0.05}  \put(6.75,-0.8){$y_3$}

\qdisk(8.5,2){0.05}\put(8,1.5){$x_4'$}

\qdisk(8.5,1){0.05}\put(8.6,1){$z_1$}

\qdisk(8.5,4){0.05}\put(8,4){$x_1$}

\qdisk(10,4){0.05}\put(10.25, 4){$x_3$}

\qdisk(10,3){0.05}\put(10.25, 3){$z_2$}

\qdisk(7,2){0.05}\put(6.5,1.5){$x_4''$}

\pscurve[linewidth=1pt]{->}(9.9, 4)(9.25,4.5)(8.6,4)

\pscurve[linewidth=1pt]{->}(8.4, 2)(7.75,2.5)(7.1,2)

\pscurve[linewidth=1pt,linestyle=dashed]{->}(7.1,2)(8,1.1)(8.4,1)

\pscurve[linewidth=1pt,linestyle=dashed]{->}(8.6,4)(9,3.5)(9.9,3)

\put(7.5,-1.5){$(x_1,x_1,x_3,x_4')$}

\put(8.75,-2){$\downarrow$}

\put(7.5,-2.5){$(x_1,x_1,x_1,x_4'')$}

\put(3,-3.5){Figure 1. \ The case $d=1$ }

\end{pspicture}
\end{center}
\vspace{2mm}

\begin{center}

\psset{unit=0.9cm}

\begin{pspicture}(-1,-4.5)(13,6.5)

\psline[linewidth=0.5pt](1, 0.0)(1,5.5)

\psline[linewidth=0.5pt](2.5, 0.0)(2.5,5.5)

\psline[linewidth=0.5pt](0,-0.5)(4,-0.5)

\qdisk(1,-0.5){0.05}\put(0.75,-0.8){$y_1$}

\qdisk(2.5,-0.5){0.05}\put(2.25,-0.8){$y_2$}

\qdisk(1,0.5){0.05}\put(0.,0.5){$x_{111}$}

\qdisk(1,1){0.05}\put(0.,1){$x_{111}'$}

\qdisk(1,1.5){0.05}\put(0.,1.5){$x_{011}$}

\qdisk(1,2){0.05}\put(0.,2){$x_{011}'$}

\qdisk(1,2.5){0.05}\put(0.,2.5){$x_{101}$}

\qdisk(1,3){0.05}\put(0.,3){$x_{101}'$}

\qdisk(1,3.5){0.05}\put(0.,3.5){$x_{110}$}

\qdisk(1,4){0.05}\put(0.,4){$x_{010}$}

\qdisk(1,4.5){0.05}\put(0.,4.5){$x_{100}$}

\qdisk(1,5){0.05}\put(-1.,5){$x_{000}=x_1$}

\qdisk(2.5,4){0.05}\put(2.75, 4){$x_{001}$}

\pscurve[linewidth=1pt]{->}(1.1, 0.5)(1.75,0.75)(1.1,1)

\pscurve[linewidth=1pt]{->}(1.1, 1.5)(1.75,1.75)(1.1,2)

\pscurve[linewidth=1pt]{->}(1.1, 2.5)(1.75,2.75)(1.1,3)

\pscurve[linewidth=1pt]{->}(1.1, 4.5)(1.75,4.75)(1.1,5)

\pscurve[linewidth=1pt]{->}(1.1, 4)(2,4.75)(1.1,5)

\pscurve[linewidth=1pt]{->}(1.1, 3.5)(2.25,4.75)(1.1,5)

\put(-1.5,-1.5){{\footnotesize
$(x_{000},x_{100},x_{010},x_{110},x_{001},x_{101},x_{011},x_{111})$}}

\put(2.25,-2){$\downarrow$}

\put(-1,-2.5){{\footnotesize$(x_{1},x_{1},x_{1},x_{1},x_{001},x'_{101},x'_{011},x'_{111})$}}


\psline[linewidth=0.5pt](7, 0.0)(7,6.5)

\psline[linewidth=0.5pt](8.5, 0.0)(8.5,6.5)

\psline[linewidth=0.5pt](10, 0.0)(10,6.5)

\psline[linewidth=0.5pt](11.5, 0.0)(11.5,6.5)

\psline[linewidth=0.5pt](6.5,-0.5)(12.5,-0.5)

\qdisk(8.5,-0.5){0.05} \put(8.25,-0.8){$y_1$}

\qdisk(10,-0.5){0.05}  \put(9.75,-0.8){$y_2$}

\qdisk(7,-0.5){0.05}  \put(6.75,-0.8){$y_3$}

\qdisk(11.5,-0.5){0.05}  \put(11.25,-0.8){$y_4$}

\qdisk(10,5){0.05}\put(10.25, 5){$x_{001}$}

\qdisk(10,2){0.05}\put(10.25, 2){$z_2$}

\qdisk(8.5,5){0.05}\put(8,5){{$x_1$}}

\qdisk(8.5,6){0.05}\put(8.6,6){{$z_1$}}

\qdisk(8.5,4){0.05}\put(7.5,4){$x_{101}'$}

\qdisk(8.5,3){0.05}\put(8.7,3){$x_{011}'$}

\qdisk(8.5,2){0.05}\put(7.5,2){$x_{111}'$}

\qdisk(8.5,1){0.05}\put(7.5,1){$x_{111}''$}

\qdisk(7,3){0.05}\put(6.,3){$x_{011}''$}

\qdisk(11.5,1){0.05}\put(11.6,1){$x_{111}'''$}

\pscurve[linewidth=1pt]{->}(9.9, 5)(9.25,5.3)(8.6,5)

\pscurve[linewidth=1pt]{->}(8.6, 4)(9.25,4.5)(8.6,4.9)

\pscurve[linewidth=1pt]{->}(8.6, 2)(9.25,1.5)(8.6,1)

\pscurve[linewidth=1pt]{->}(8.4, 3)(7.75,3.5)(7.1,3)

\pscurve[linewidth=1pt]{->}(8.4, 3)(7.75,3.5)(7.1,3)

\pscurve[linewidth=1.5pt]{->}(8.6,1)(10,0.5)(11.4,1)

\pscurve[linewidth=1.5pt]{->}(6.8,3)(6.3,4.8)(8,5.)

\pscurve[linewidth=1pt,linestyle=dashed]{->}(11.4,1)(11,1.5)(10.1,2)

\pscurve[linewidth=1pt,linestyle=dashed]{->}(8.4,5.2)(7.5,5.9)(8.4,6)

\put(6.5,-1.5){{\footnotesize$(x_{1},x_{1},x_{1},x_{1},x_{001},x'_{101},x'_{011},x'_{111})$}}

\put(8.75,-2){$\downarrow$}

\put(6.5,-2.5){{\footnotesize$(x_{1},x_{1},x_{1},x_{1},x_{1},x_{1},x''_{011},x''_{111})$}}

\put(8.75,-3){$\downarrow$}

\put(6.5,-3.5){{\footnotesize$(x_{1},x_{1},x_{1},x_{1},x_{1},x_{1},x_{1},x'''_{111})$}}

\put(3,-4.5){Figure 2. \ The case $d=2$ }

\end{pspicture}
\end{center}
\vspace{2mm} The idea of the proof in the general case is the
following. For a point ${\bf x}\in \overline{\F^{[d+1]}}(x_1)$ we
apply face transformations $F_1^k$ such that the first
$2^d$-coordinates of ${\bf x}_1=\lim F_1^k\bf x$ will be
$x^{[d]}_1$. Then apply face transformations $F_2^k$ such that the
first $2^d+2^{d-1}$-coordinates of ${\bf x}_2=\lim F^k_2{\bf x}_1$
will be $(x^{[d]}_1, x^{[d-1]}_1)$. Repeating this process we get a
point $((x_1^{[d+1]})_*,x_2)\in \overline{\F^{[d+1]}}(x_1)$ which
implies that $(x_1, x_2)\in \RP^{[d]}(X)$. Then we use the same idea
used in the proof when $d=1,2$ to trace back to find $(z_1,z_2)$.
Here are the details.

\medskip

Now let $(y_1, y_2)\in \RP^{[d]}(Y)$, then by Proposition
\ref{prop-R}, $(y_1^{[d]},y_2,(y_1^{[d]})_*)\in
\overline{\F^{[d+1]}}(y_1^{[d+1]})$. So there is some sequence
$F_k\in \F^{[d+1]}$ such that $$F_k {y_1^{[d+1]}}\to
(y_1^{[d]},y_2,(y_1^{[d]})_*),\ k\to \infty .$$ Without loss of
generality, we may assume that
\begin{equation}\label{d10}
    F_k {x_1^{[d+1]}}\to {\bf x}, \ k\to \infty.
\end{equation}
Then $x_\emptyset=x_1$ and $\pi^{[d+1]}({\bf
x})=(y_1^{[d]},y_2,(y_1^{[d]})_*)$.

Let ${\bf x_I}=(x_\ep: {\ep(d+1)=0})\in X^{[d]}$ and ${\bf
x_{II}}=(x_\ep: {\ep(d+1)=1})\in X^{[d]}$. Then ${\bf x}=({\bf x_I},
{\bf x_{II}})$. Note that
$$\pi^{[d]}({\bf x_I})=\pi^{[d]}(x_1^{[d]})=y_1^{[d]}, \ \text{and}\ \pi^{[d]}({\bf x_{II}})=
(y_2, (y^{[d]}_1)_*).$$ By Proposition \ref{unique-minimal-set},
there is some sequence $F^1_k\in \F^{[d]}$ such that
$$F_k^1({\bf x_I})\to x_1^{[d]} , \ k\to \infty .$$
We may assume that
$$ F_k^1({\bf x_{II}})\to {\bf x'_{II}},\ k\to \infty .$$
Note that $\pi^{[d]}({\bf x_{II}})=\pi^{[d]}( {\bf x'_{II}})=(y_2,
(y^{[d]}_1)_*)$.

Let $F^1_k=(S_{\ep'}^k: \ep'\in \{0,1\}^d)$. Let $H_k^1=(S_\ep^k:
\ep\in \{0,1\}^{d+1})\in \F^{[d+1]}$ such that
$$(S_\ep^k: \ep\in \{0,1\}^{d+1}, \ep(d+1)=0)=(S_\ep^k: \ep\in \{0,1\}^{d+1}, \ep(d+1)=1)=F_k^1.$$
Then $$H^1_k({\bf x})=F^1_k\times F^1_k({\bf x_I},{\bf x_{II}})\to
(x_1^{[d]}, {\bf x_{II}'})\triangleq {\bf x^1}\in \overline
{\F^{[d+1]}}(x_1^{[d+1]}), \ k\to \infty.$$ Let ${\bf
y^1}=\pi^{[d+1]}({\bf x^1})$. It is easy to see that $x^1_\ep=x_1$
if $\ep(d+1)=0$. For ${\bf y^1}$, $y^1_{\{d+1\}}=y^1_{00\ldots
01}=y_2$ and $y^1_{\ep}=y_1$ for all $\ep\neq \{d+1\}$.

Let ${\bf x^1_I}=(x_\ep: \ep\in \{0,1\}^{d+1}, \ep(d)=0)\in X^{[d]}$
and ${\bf x^1_{II}}=(x_\ep: \ep\in \{0,1\}^{d+1}, \ep(d)=1)\in
X^{[d]}$. By Proposition \ref{unique-minimal-set}, there is some
sequence $F^2_k\in \F^{[d]}$ such that
$$F_k^2({\bf x^1_I})\to x_1^{[d]}, \ F_k^2({\bf x^1_{II}})\to {\bf {x^1_{II}}'}, k\to \infty$$
and $\pi^{[d]}( {\bf {x^1_{II}}'})=(y^{[d-1]}_1, y_3,
(y^{[d-1]}_1)_*)$ for some $y_3\in Y$.

Let $F^2_k=(S_{\ep'}^k: \ep'\in \{0,1\}^d)$. Let $H_k^2=(S_\ep^k:
\ep\in \{0,1\}^{d+1})\in \F^{[d+1]}$ such that
$$(S_\ep^k: \ep\in \{0,1\}^{d+1}, \ep(d)=0)=(S_\ep^k: \ep\in \{0,1\}^{d+1}, \ep(d)=1)=F_k^2.$$
Then let $$H^2_k({\bf x^1})\to {\bf x^2}\in \overline
{\F^{[d+1]}}(x_1^{[d+1]}),\ k\to\infty.$$ Let ${\bf
y^2}=\pi^{[d+1]}({\bf x^2})$. Then $H^2_k({\bf y^1})\to {\bf y^2},\
k\to\infty.$ From this one has that $(y_3,y_1)\in
\overline{\O((y_1,y_2),T\times T)}$. By the definition of ${\bf
x^2}, {\bf y^2}$, it is easy to see that $x^2_\ep=x_1$ if
$\ep(d+1)=0$ or $\ep(d)=0$; $y^2_{\{d,d+1\}}=y^2_{00\ldots 011}=y_3$
and $y^2_{\ep}=y_1$ for all $\ep\neq \{d, d+1\}$.

Now assume that we have ${\bf x^j}\in \overline
{\F^{[d+1]}}(x_1^{[d+1]})$ for $1\le j\le d$ with $\pi^{[d+1]}({\bf
x^j})={\bf y^j}$ such that $x^j_\ep=x_1$ if there exists some $k$
with $d-j+2\le k\le d+1$ such that $\ep(k)=0$; $y^j_{\{d-j+2,\ldots,
d,d+1\}}=y_{j+1}$ and $y^j_{\ep}=y_1$ for all $\ep\neq
\{d-j+2,\ldots,d, d+1\}$, and $(y_{j+1},y_1)\in
\overline{\O((y_1,y_{j}),T\times T)}$.

Let ${\bf x^j_I}=(x_\ep: \ep\in \{0,1\}^{d+1}, \ep(d-j+1)=0)\in
X^{[d]}$ and ${\bf x^j_{II}}=(x_\ep: \ep\in \{0,1\}^{d-j+1},
\ep(d-j+1)=1)\in X^{[d]}.$ By Proposition \ref{unique-minimal-set},
there is some sequence $F^{j+1}_k\in \F^{[d]}$ such that
$$F_k^{j+1}({\bf x^j_I})\to x_1^{[d]}, \ F_k^{j+1}({\bf x^j_{II}})\to {\bf {x^j_{II}}'}, k\to \infty.$$

Let $F^{j+1}_k=(S_{\ep'}^k: \ep'\in \{0,1\}^d)$. Let
$H_k^{j+1}=(S_\ep^k: \ep\in \{0,1\}^{d+1})\in \F^{[d+1]}$ such that
$$(S_\ep^k: \ep\in \{0,1\}^{d+1}, \ep(d-j+1)=0)=(S_\ep^k: \ep\in \{0,1\}^{d+1}, \ep(d-j+1)=1)=F_k^{j+1}.$$
Then let $$H^{j+1}_k({\bf x^j})\to {\bf x^{j+1}}\in \overline
{\F^{[d+1]}}(x_1^{[d+1]}),\ k\to \infty .$$ It is easy to see that
$x^{j+1}_\ep=x_1$ if there exists some $k$ with $d-j+1\le k\le d+1$
such that $\ep(k)=0$.

Let ${\bf y^{j+1}}=\pi^{[d+1]}({\bf x^{j+1}})$. Then
$y^{j+1}_{\ep}=y_1$ for all $\ep\neq \{d-j+1, d-j+2,\ldots,d+1\}$,
and denote $y^j_{\{d-j+1,d-j+2,\ldots ,d+1\}}=y_{j+2}$. Note that
$H^2_k({\bf y^j})\to {\bf y^{j+1}},\ k\to\infty.$ From this one has
that $(y_{j+2},y_1)\in \overline{\O((y_1,y_{j+1}),T\times T)}$.

\medskip

Inductively we get ${\bf x^1, \ldots, x^{d+1}}$ and ${\bf y^1,
\ldots, y^{d+1}}$ such that for all $1\le j\le d+1$ ${\bf x^j}\in
\overline {\F^{[d+1]}}(x_1^{[d+1]})$ with $\pi^{[d+1]}({\bf
x^j})={\bf y^j}$. And $x^j_\ep=x_1$ if there exists some $k$ with
$d-j+2\le k\le d+1$ such that $\ep(k)=0$; $y^j_{\{d-j+2,\ldots,
d,d+1\}}=y_{j+1}$ and $y^j_{\ep}=y_1$ for all $\ep\neq
\{d-j+2,\ldots,d, d+1\}$, and $(y_{j+1},y_1)\in
\overline{\O((y_1,y_{j}),T\times T)}$.

For ${\bf x^{d+1}}$, we have that $x^{d+1}_\ep=x_1$ if there exists
some $k$ with $1\le k\le d+1$ such that $\ep(k)=0$. That means there
is some $x_2\in X$ such that
$${\bf x^{d+1}}=(x_1,x_1,\ldots, x_1, x_2)\in \overline
{\F^{[d+1]}}(x_1^{[d+1]}). $$ By Lemma \ref{RPd-lemma2},
$(x_1,x_2)\in \RP^{[d]}(X)$. Note that $\pi(x_2)=y_{d+2}$.

Since $(y_{j+1},y_1)\in \overline{\O((y_1,y_{j}),T\times T)}$ for
all $1\le j \le d+1$, we have $(y_{d+2},y_1)\in
\overline{\O((y_1,y_2),T\times T)}$ or $(y_1, y_{d+2})\in
\overline{\O((y_1,y_2),T\times T)}$. Without loss of generality, we
assume that $(y_1,y_{d+2})\in \overline{\O((y_1,y_2),T\times T)}$.
Since $(y_1,y_2)$ is $T\times T$-minimal, there is some
$\{n_k\}\subset \Z$ such that $(T\times T)^{n_k}(y_1,y_{d+2})\to
(y_1,y_2), k\to \infty$. Without loss of generality, we assume that
$$(T\times T)^{n_k}(x_1,x_2)\to (z_1,z_2), \ k\to \infty.$$ Since
$\RP^{[d]}(X)$ is closed and invariant, we have
$$(z_1,z_2)\in \overline{\O((x_1,x_2),T\times T)}\subset
\RP^{[d]}(X).$$ And $$\pi\times \pi(z_1,z_2)=\lim_k(T\times
T)^{n_k}(\pi(x_1),\pi(x_2))= \lim_k (T\times
T)^{n_k}(y_1,y_{d+2})=(y_1,y_2).$$ The proof is completed.
\end{proof}

\section{A combinatorial consequence and group actions}\label{section-appl}

\subsection{A combinatorial consequence} We have the following
combinatorial consequence of the fact that
$(\overline{\F^{[d]}}(x^{[d]}), \F^{[d]})$ is minimal.





\begin{prop} Let $(X,T)$ be a minimal system, $x\in X$ and $U$ be an open
neighborhood of $x$. Put $S=\{n\in\Z:T^nx\in U\}$. Then for each
$d\ge 1$,
$$\{(n_1,\ldots,n_d)\in \Z^d: n_1\ep_1+\cdots +n_d\ep_d\in S,
\ep_i\in \{0,1\}, 1\le i\le d\}$$ is syndetic.
\end{prop}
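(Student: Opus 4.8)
The plan is to deduce the syndeticity of the set
$$\{(n_1,\ldots,n_d)\in\Z^d: n_1\ep_1+\cdots+n_d\ep_d\in S,\ \ep_i\in\{0,1\}\}$$
from the minimality of $(\overline{\F^{[d]}}(x^{[d]}),\F^{[d]})$ established in Theorem \ref{main1}-(1). The key observation is that membership of $(n_1,\ldots,n_d)$ in this set is exactly the condition that the parallelepiped point
$$(T^{{\bf n}\cdot\ep}x:\ep\in\{0,1\}^d)=(T^{[d]}_1)^{n_1}\cdots(T^{[d]}_d)^{n_d}\,x^{[d]}$$
lands inside the open box $U^{[d]}=\prod_{\ep}U$, since the $\ep$-th coordinate of this point is $T^{{\bf n}\cdot\ep}x$, and each factor requires $T^{{\bf n}\cdot\ep}x\in U$, i.e. ${\bf n}\cdot\ep\in S$. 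Thus the set in question is precisely the return-time set $N_{\F^{[d]}}(x^{[d]},U^{[d]})$ of the base point $x^{[d]}$ to the neighborhood $U^{[d]}$ under the face group action, where we index return times by the vector $(n_1,\ldots,n_d)\in\Z^d$ of exponents applied to the generators $T^{[d]}_1,\ldots,T^{[d]}_d$.

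First I would set $\mathbf{U}=U^{[d]}\cap\overline{\F^{[d]}}(x^{[d]})$, which is a nonempty open subset of the minimal system $(\overline{\F^{[d]}}(x^{[d]}),\F^{[d]})$ (nonempty because it contains $x^{[d]}$). Since this system is minimal under the face group, the standard fact that return-time sets to open sets in a minimal system are syndetic (with respect to the acting group) applies: there is a finite subset $\{G_1,\ldots,G_m\}\subset\F^{[d]}$ such that $\bigcup_{i=1}^m G_i^{-1}\mathbf{U}=\overline{\F^{[d]}}(x^{[d]})$, so that for every $F\in\F^{[d]}$ there is some $i$ with $FG_i\, x^{[d]}\in\mathbf{U}$. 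In particular the set $\{F\in\F^{[d]}: F x^{[d]}\in\mathbf{U}\}$ is syndetic in the group $\F^{[d]}$.

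Next I would translate group-syndeticity back into $\Z^d$-syndeticity. The face group $\F^{[d]}$ is generated by $T^{[d]}_1,\ldots,T^{[d]}_d$; writing a general element as a product in these generators and abelianizing (the relevant return behaviour depends only on the total exponents $(n_1,\ldots,n_d)$ since each $T^{[d]}_j$ contributes $n_j$ to the exponents on all coordinates $\ep$ with $j\in\ep$), the finite covering set $\{G_1,\ldots,G_m\}$ yields a finite set $F\subset\Z^d$ of exponent-vectors such that every $(n_1,\ldots,n_d)\in\Z^d$ can be shifted by some element of $F$ into the return set. Concretely, if $G_i=(T^{[d]}_1)^{a_{i,1}}\cdots(T^{[d]}_d)^{a_{i,d}}$ then the translate by $(a_{i,1},\ldots,a_{i,d})$ does the job, giving $S_d+F=\Z^d$ where $S_d$ is our target set; hence $S_d$ is syndetic.

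\textbf{The main obstacle} I anticipate is the bookkeeping in the last paragraph: making precise the correspondence between the group $\F^{[d]}$ and the exponent lattice $\Z^d$, since $\F^{[d]}$ is generally nonabelian and an arbitrary product of face transformations need not be of the pure form $(T^{[d]}_1)^{n_1}\cdots(T^{[d]}_d)^{n_d}$. What saves the argument is that the orbit of the \emph{diagonal} point $x^{[d]}$ only sees the commutative data: applying any word in the generators to $x^{[d]}$ produces a parallelepiped point determined solely by the net exponent $n_j$ accumulated on generator $j$, because the $\Q^{[d]}$-structure is built from the single transformation $T$ acting diagonally. I would make this rigorous by noting that $F x^{[d]}=(T^{{\bf n}\cdot\ep}x:\ep)$ for $F=\prod (T^{[d]}_{j})^{\text{(stuff)}}$ depends only on the abelianized exponent vector ${\bf n}=(n_1,\ldots,n_d)$, so the syndetic covering in $\F^{[d]}$ descends to a finite translation-covering in $\Z^d$, completing the proof.
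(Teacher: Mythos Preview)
Your approach is correct and coincides with the paper's one-line proof: the result follows because $x^{[d]}$ is a minimal point for the face action $\F^{[d]}$, so its return-time set to the open neighborhood $U^{[d]}$ is syndetic. One remark: the ``main obstacle'' you flag is not actually present here, since the face transformations $T^{[d]}_1,\ldots,T^{[d]}_d$ pairwise commute (each acts as $T$ or $\id$ on every coordinate, and $T$ commutes with itself), so $\F^{[d]}$ is already an abelian group and the map $\Z^d\to\F^{[d]}$, $(n_1,\ldots,n_d)\mapsto (T^{[d]}_1)^{n_1}\cdots(T^{[d]}_d)^{n_d}$ is a surjective homomorphism; hence the passage from syndeticity in $\F^{[d]}$ to syndeticity in $\Z^d$ is immediate without any special analysis of the diagonal orbit.
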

\begin{proof} This follows by that fact that $x^{[d]}$ is a minimal
point under the face group action $\F^{[d]}$.
\end{proof}

To understand $S$ better we show the following proposition which is
similar to \cite[Proposition 2.3]{HY}. Note that a collection $\F$
of subsets of $\Z$ is a {\it family} if it is upwards, i.e. $A\in
\F$ and $A\subset B$ imply that $B\in \F$.

\begin{prop} The family of dynamically syndetic subsets is the family generated by the sets $S$ whose
indicator functions $1_S$ are the minimal points of
$(\{0,1\}^\Z,\sigma)$ and $0\in S$, where $\sigma$ is the shift.
\end{prop}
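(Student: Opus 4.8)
The plan is to read ``the family of dynamically syndetic subsets'' as the family $\mathcal{F}_1$ \emph{generated} by them (the collection of all supersets of dynamically syndetic sets), and the right-hand side as the family $\mathcal{F}_2$ of all supersets of sets $S'$ with $0\in S'$ and $1_{S'}$ a minimal point of $(\{0,1\}^\Z,\sigma)$. Since both are upward closed, it suffices to check the two inclusions on generators: that every dynamically syndetic set lies in $\mathcal{F}_2$, and that every $S'$ with $0\in S'$ and $1_{S'}$ minimal lies in $\mathcal{F}_1$.

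The inclusion $\mathcal{F}_2\subseteq\mathcal{F}_1$ is the easy half. Given such an $S'$, I would set $\xi=1_{S'}$, let $X=\overline{\O(\xi,\sigma)}$ (minimal, since $\xi$ is a minimal point), take $x=\xi$ and $U=\{\omega\in X:\omega(0)=1\}$, a clopen neighborhood of $x$ using $0\in S'$. Then $\{n:\sigma^n x\in U\}=\{n:\xi(n)=1\}=S'$, so $S'$ is itself dynamically syndetic, hence a generator of $\mathcal{F}_1$.

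The inclusion $\mathcal{F}_1\subseteq\mathcal{F}_2$ is the substantial half: given a dynamically syndetic $S=\{n:T^nx\in U\}$ arising from a minimal $(X,T)$, a point $x$, and an open $U\ni x$, I must produce $S'\subseteq S$ with $0\in S'$ whose indicator is a minimal point. First I would shrink $U$: by regularity choose an open $V$ with $x\in V\subseteq\cl(V)\subseteq U$, and put $\xi=1_{\{n:T^nx\in V\}}\in\{0,1\}^\Z$. The key move is to pass to the product system $(X\times\{0,1\}^\Z,\,T\times\sigma)$ at the point $(x,\xi)$: a minimal subset $M$ of $\overline{\O((x,\xi),T\times\sigma)}$ projects onto $X$ (its projection is a nonempty closed invariant subset of the minimal $X$), so there is a minimal point $(x,\eta)\in M$ sitting over $x$, and then $\eta$ is a minimal point of $(\{0,1\}^\Z,\sigma)$. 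Choosing a sequence $m_k$ with $(T^{m_k}x,\sigma^{m_k}\xi)\to(x,\eta)$, I set $S'=\{n:\eta(n)=1\}$.

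It remains to verify that $S'$ works, and this is where the main obstacle --- the discontinuity of coding by an open set --- is resolved by the sandwich $x\in V\subseteq\cl(V)\subseteq U$. If $\eta(n)=1$, then $T^{m_k+n}x=T^n(T^{m_k}x)\in V$ for all large $k$; letting $k\to\infty$ and using $T^{m_k}x\to x$ together with the closedness of $\cl(V)$ gives $T^nx\in\cl(V)\subseteq U$, so $n\in S$, whence $S'\subseteq S$. On the other hand $T^{m_k}x\to x\in V$ with $V$ \emph{open} forces $\xi(m_k)=1$ for large $k$, so $\eta(0)=1$ and $0\in S'$. Since $\eta=1_{S'}$ is minimal, $S'$ is a generator of $\mathcal{F}_2$ with $S'\subseteq S$, giving $S\in\mathcal{F}_2$. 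The hard point is exactly this balancing act: the containment $S'\subseteq S$ requires the closed estimate $\cl(V)\subseteq U$, while $0\in S'$ requires the open estimate coming from $V$; and passing to the fiber over $x$ in the product system is what forces both limiting statements to refer to the \emph{same} base point $x$.
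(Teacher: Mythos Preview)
Your argument is correct, and the overall strategy---pass to the product system $(X\times\{0,1\}^{\Z},\,T\times\sigma)$, take a minimal subset of the orbit closure, project back to $X$ to find a minimal point over $x$---is the same one the paper uses. The technical handling of the coding discontinuity differs. The paper first shrinks $U$ to an open $V'\ni x$ whose \emph{boundary} misses the orbit of $x$; this forces the fiber over $x$ in the closed invariant set $Y=\{(z,t):t(i)=1\Rightarrow T^iz\in\cl(V'),\;t(i)=0\Rightarrow T^iz\in\cl(X\setminus V')\}$ to be a single point $(x,1_{N(x,V')})$, so that point itself lands in any minimal subset over $X$ and $1_{N(x,V')}$ is already minimal. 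You instead use a regular sandwich $V\subseteq\cl(V)\subseteq U$ and allow the minimal point $(x,\eta)$ to differ from $(x,\xi)$, recovering $S'\subseteq S$ via the closed half and $0\in S'$ via the open half. Your route avoids the (easy but unstated in the paper) step of finding a neighborhood whose boundary avoids a countable orbit; the paper's route yields the slightly sharper conclusion that $N(x,V')$ itself has minimal indicator, not merely some subset of it. Either device suffices for the proposition.
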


\begin{proof} Put $\Sigma=\{0,1\}^\Z$. We denote the family generated by the sets containing $\{0\}$ whose
indicator functions are the minimal points of $(\Sigma,\sigma)$ by
$\F_m$. Clearly, if $1_F$ is the indicator function of $F$ then
$F=N(1_F,[1])$, where $[1]=\{ s\in \Sigma: s(0)=1 \}$. Hence $\F_m$
is contained in the family of dynamical syndetic subsets.

On the other hand, let $A$ be a dynamical syndetic subset. Then
there exist a minimal system $(X,T)$ with metric $d$, $x\in X$ and
an open neighborhood $V$ of $x$ such that $A\supset N(x,V)=\{n\in\Z:
T^nx\in V\}$. It is easy to see that we can shrink $V$ to an open
neighborhood $V'$ of $x$ whose boundary is disjoint from the orbit
of $x$.

Then do the classical lifting trick, a la Glasner, Adler etc. Let
$$
Y=\{ (z,t)\in X\times \Sigma : t(i)=1 \text{ implies } T^i z \in
\text{cl}(V')\text{ and } t(i)=0 \text{ implies } T^iz\in
\text{cl}(X\setminus V') \}$$

Then $Y$ is a $T\times \sigma$-invariant closed subset of $X\times
\Sigma$. Since the orbit of $x$ doesn't meet the boundary of $V'$,
there is a unique $t\in \Sigma$ such that $(x,t)\in Y$ and $t$ is
the indicator function of $N(x,V')$. Take a minimal subset $J$ of
$(Y,T\times \sigma)$ with $J\subset \overline{\O((x,t),T\times
\sigma)}$ and let $\pi_X:J\rightarrow X$ be the projective map.
Since $(X,T)$ is minimal, $\pi_X(J)=X$. Hence $(x,t)\in J$.
Projecting $J$ to $\Sigma$ we see that $t$ is a minimal point. Hence
$A\in \F_m$ as $A\supset N(x,V')$ and $t=1_{N(x,V')}$.
\end{proof}

\begin{rem} We note that if $S$ is a syndetic subset of $\Z$ then
$S-S\supset S_1-S_1$ for some dynamically syndetic subset $S_1$.
\end{rem}

\subsection{Abelian group actions}

\begin{de}
Let $X$ be a compact metric space, $G$ be an abelian topological
group acting on $X$ and let $d\ge 1$ be an integer. A pair $(x, y)
\in X\times X$ ia said to be {\em regionally proximal of order $d$
of $G$-action} if for any $\d
> 0$, there exist $x', y'\in X$ and a vector ${\bf n} = (n_1,\ldots
, n_d)\in G^d$ such that $\rho(x, x') < \d, \rho(y, y') <\d$, and $$
\rho(T^{{\bf n}\cdot \ep}x', T^{{\bf n}\cdot \ep}y') < \d\ \text{for
any nonempty $\ep\subset [d]$},$$ where ${\bf n}\cdot \ep =
\sum_{i\in \ep} n_i$. The set of regionally proximal pairs of order
$d$ of $G$-action is denoted by $\RP_G^{[d]}(X)$, which is called
{\em the regionally proximal relation of order $d$ of $G$-action}.
\end{de}

A subset $S\subseteq G$ is a {\em central set} if there exists a
system $(X,G)$, a point $x\in X$ and a minimal point $y$ proximal to
$x$, and a neighborhood $U_y$ of $y$ such that $N(x,U_y)\subset S$.
The notion of IP-set can be defined in this setting too. By the
proof of Furstenberg \cite[Proposition 8.10.]{F} we have

\begin{lem}
Let $G$ be an abelian group. Then any central set is an IP-set.
\end{lem}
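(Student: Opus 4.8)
The plan is to unwind the definitions and produce, inside the return-time set $N(x,U_y)=\{g\in G: gx\in U_y\}$, a finite-sums set $FS(\{p_i\}_{i=1}^\infty)$; since $N(x,U_y)\subseteq S$ this immediately exhibits $S$ as an IP-set. The engine is the enveloping semigroup $E(X,G)$ developed in the appendix, viewed as a compact right-topological semigroup in which (the image of) $G$ sits as a dense subsemigroup. First I would extract a minimal idempotent adapted to the data. Since $y$ is minimal there is a minimal idempotent in a minimal left ideal of $E(X,G)$ fixing $y$ (Proposition \ref{minimal-point}), and since $(x,y)$ is proximal the Ellis-semigroup description of proximality (Proposition \ref{proximal-Ellis}) lets one arrange this idempotent $u$ so that, in addition to $uu=u$ and $uy=y$, one has $ux=y$. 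Thus $y=ux\in U_y$, and $u$ is a limit in $E(X,G)$ of elements of $G$.

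The second step is the Hindman/Galvin--Glazer-type induction that converts the single relation $u=uu$ into an infinite sequence $p_1,p_2,\ldots\in G$ all of whose finite sums lie in $N(x,U_y)$. The mechanism is the ``filter'' attached to $u$: for a neighbourhood $V$ of $y$ set $V^u=\{z\in X: uz\in V\}$; idempotency gives $uz\in V\cap V^u$ whenever $z\in V^u$, and this self-reproducing property lets one repeatedly select group elements $p_{n+1}$ so that, for every nonempty $\alpha\subseteq[n+1]$, the partial sum $\big(\sum_{i\in\alpha}p_i\big)x$ stays in a prescribed shrinking neighbourhood of $y$ inside $U_y$. The abelian structure of $G$ is exactly what is needed here: it is what makes $FS(\{p_i\})$ a well-defined, order-independent finite-sums set and what allows partial sums already placed in $U_y$ to be recombined with later elements. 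Carrying the induction through yields $FS(\{p_i\}_{i=1}^\infty)\subseteq N(x,U_y)$, as required.

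The main obstacle is that $E(X,G)$ is only right-topological, so left multiplication is discontinuous; consequently $p_{n+1}$ cannot be chosen by a naive convergence argument, since one cannot control $u g_\lambda$ from $g_\lambda\to u$. This is resolved, exactly as in Furstenberg's treatment, by driving the selection through the purely set-theoretic implication $z\in V^u \Rightarrow uz\in V\cap V^u$ coming from $u^2=u$ (that is, through the idempotent structure rather than through convergence), which is insensitive to the discontinuity. Finally I would observe that every step of Furstenberg's argument for \cite[Proposition 8.10]{F} uses only that $\Z$ is a commutative group under addition---its order type is never invoked---so the identical proof applies verbatim with $\Z$ replaced by the arbitrary abelian group $G$; this is precisely the assertion made just before the statement.
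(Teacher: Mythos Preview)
Your proposal is correct and is precisely the approach the paper intends: the paper gives no independent proof but simply invokes Furstenberg's argument for \cite[Proposition 8.10]{F}, noting that it goes through verbatim for an arbitrary abelian group. Your sketch faithfully unpacks that argument---extracting a minimal idempotent $u$ with $ux=y$ from the proximality of $(x,y)$ and the minimality of $y$, and then running the Galvin--Glazer induction via the sets $V^u$---so there is nothing to add.
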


So we have
\begin{lem} If $(X,G)$ is minimal, then ${\bf P}(X)\subset\RP_G^{[d]}(X)$.
\end{lem}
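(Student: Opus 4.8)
The plan is to imitate the proof of Lemma \ref{prox}, the only difference being that thickness of the relevant return set is no longer available in a general abelian group, so I route through the notion of a central set; this is exactly why the preceding lemma (every central set is an IP-set) was recorded. Fix $(x,y)\in {\bf P}(X)$ and $\d>0$, and set
$$N_\d(x,y)=\{g\in G: d(gx,gy)<\d\}.$$
Once $N_\d(x,y)$ is shown to be central, the preceding lemma furnishes some $FS(\{p_i\}_{i=1}^\infty)\subseteq N_\d(x,y)$; in particular $FS(\{p_i\}_{i=1}^d)\subseteq N_\d(x,y)$, so that with ${\bf p}=(p_1,\ldots,p_d)\in G^d$ one has $d(T^{{\bf p}\cdot \ep}x,T^{{\bf p}\cdot \ep}y)<\d$ for every nonempty $\ep\subset[d]$. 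Choosing $x'=x$, $y'=y$ and ${\bf n}={\bf p}$ in the definition of $\RP_G^{[d]}$ then yields $(x,y)\in \RP_G^{[d]}(X)$, and as $\d>0$ is arbitrary this gives ${\bf P}(X)\subset\RP_G^{[d]}(X)$.

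The heart of the argument, and the step where minimality of $(X,G)$ enters, is therefore to verify that $N_\d(x,y)$ is central. I would work in the product system $(X\times X,G)$. Since $(x,y)$ is proximal, Proposition \ref{proximal-Ellis} supplies a minimal idempotent $u\in E(X,G)$ with $ux=uy$; writing $w=ux=uy$, we get $u(x,y)=(w,w)$ lying on the diagonal $\D_X=\{(a,a):a\in X\}$. Because $(X,G)$ is minimal, $\D_X$ is a copy of $X$ and hence a minimal subsystem of $(X\times X,G)$, so $(w,w)$ is a minimal point of $(X\times X,G)$. Moreover a point and its image under a minimal idempotent are proximal (Proposition \ref{proximal-Ellis}), so $(w,w)=u(x,y)$ is proximal to $(x,y)$.

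Now take the neighborhood $U=\{(a,b)\in X\times X: d(a,w)<\d/2,\ d(b,w)<\d/2\}$ of $(w,w)$; by the triangle inequality every $(a,b)\in U$ satisfies $d(a,b)<\d$. Hence
$$N((x,y),U)=\{g\in G: (gx,gy)\in U\}\subseteq \{g\in G: d(gx,gy)<\d\}=N_\d(x,y).$$
Taking in the definition of a central set the system $(X\times X,G)$, the point $(x,y)$, the minimal point $(w,w)$ proximal to it, and the neighborhood $U$, we conclude that $N_\d(x,y)$ contains $N((x,y),U)$ and is therefore central, which closes the loop with the first paragraph. The main obstacle is precisely this centrality claim: in the $\Z$-case of Lemma \ref{prox} one bypasses it using that $N_\d(x,y)$ is thick, but for a general abelian $G$ there is no thickness at hand, and producing the minimal diagonal point proximal to $(x,y)$ demanded by the definition of a central set is exactly what forces the hypothesis that $(X,G)$ be minimal.
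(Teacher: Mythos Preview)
Your proof is correct and is precisely the argument the paper has in mind: the paper gives no explicit proof here, merely writing ``So we have'' after recording that central sets are IP-sets, and your write-up fills in exactly the missing step---showing $N_\d(x,y)$ is central via the product system and the minimal diagonal point $(w,w)$---that replaces the thickness argument from the $\Z$-case of Lemma \ref{prox}. One tiny remark: when you write $u(x,y)=(w,w)$ you are tacitly identifying $u\in E(X,G)$ with the diagonal element $\tilde u\in E(X\times X,G)$ given by $\tilde u(a,b)=(ua,ub)$; this is standard (a net $g_i\to u$ in $E(X,G)$ converges pointwise on $X\times X$ as well), but worth being aware of since Proposition \ref{proximal-Ellis}(2) is being applied in $E(X\times X,G)$.
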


At the same time the notions of face group and parallelepiped group
can be defined. So we have the following theorem by our proof

\begin{thm} Let $(X,G)$ a minimal system with $G$ being abelian. Then
$\RP_G^{[d]}(X)$ is a closed invariant equivalence relation. So
$(X/\RP_G^{[d]}(X), G)$ is distal.
\end{thm}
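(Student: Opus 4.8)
The plan is to carry the entire $\Z$-argument over to an abelian $G$, exploiting the fact that every tool used for $\Z$ was already developed in the abstract framework of a system $(X,\mathcal{T})$ under a topological group $\mathcal{T}$. First I would fix the analogous combinatorial data. Writing ${\bf n}\in G^d$ and ${\bf n}\cdot\ep=\sum_{i\in\ep}n_i$, define $\Q^{[d]}(X)$ as the closure in $X^{[d]}$ of the points $(T^{{\bf n}\cdot\ep}x:\ep\subset[d])$; for each $g\in G$ and $j\in[d]$ define the face transformation $g^{[d]}_j$ acting by $T_g$ on coordinates $\ep$ with $j\in\ep$ and by the identity otherwise, and let $\F^{[d]}(X)$ be the group generated by all the $g^{[d]}_j$, with $\G^{[d]}(X)$ generated in addition by the diagonal transformations $g^{[d]}$. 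These are genuine group actions on $X^{[d]}$, so they fall under the abstract theory of Appendix \ref{section-tds}, and $\RP_G^{[d]}(X)$ is closed and invariant for the same formal reasons as $\RP^{[d]}(X)$.

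The heart of the matter is the group-action analogue of Theorem \ref{main1}: that $(\overline{\F^{[d]}}(x^{[d]}),\F^{[d]})$ is minimal and is the unique $\F^{[d]}$-minimal subset of $\Q^{[d]}[x]$. Here I would re-run Section \ref{section-F-minimal} essentially verbatim. The structure theorem \ref{structure} decomposes the minimal $G$-system through a proximal extension from a strictly PI system and a RIC weakly mixing extension, and Propositions \ref{proximal-extension1}, \ref{distal-extension1}, \ref{wm-extension1} treat the three extension types; since these, together with the enveloping-semigroup inputs (Lemma \ref{proximal-lemma}, Lemma \ref{distal-lemma1}, Theorem \ref{wm-extension}) and the minimality of $(\Q^{[d]},\G^{[d]})$ (Proposition \ref{minimal}), are all stated and proved for an arbitrary acting group $\mathcal{T}$, no change is needed beyond reading $G^d$ in place of $\Z^d$. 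The one genuinely $\Z$-specific step is the implication ``central set $\Rightarrow$ IP set'' invoked in Proposition \ref{proximal-extension1}; this is exactly supplied by the group-action lemma above (any central set in an abelian $G$ is an IP set), and the finite configuration $FS(\{p_i\}_{i=1}^d)$ it produces is all the argument consumes. The switching Lemma \ref{host-trick} and the uniqueness Proposition \ref{unique-minimal-set}, being purely enveloping-semigroup arguments, go through unchanged.

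With this analogue in hand, the equivalent conditions of Theorem \ref{main2} and the transitivity argument of Theorem \ref{main3} are purely formal: they use only that $\Q^{[d+1]}$ is invariant under the Euclidean permutations, that $F_k\times F_k\in\F^{[d+1]}$ when $F_k\in\F^{[d]}$, and the minimality/uniqueness just established. Hence $\RP_G^{[d]}(X)$ is a closed, invariant equivalence relation. To deduce distality of the quotient, let $q\colon X\to X/\RP_G^{[d]}(X)$ be the factor map. By the group-action lemma ${\bf P}(X)\subseteq\RP_G^{[d]}(X)$, and by the group-action form of Lemma \ref{proximal-lemma2} proximal pairs lift through $q$. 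So if $(\bar x,\bar y)\in{\bf P}(X/\RP_G^{[d]}(X))$ I can lift it to $(x,y)\in{\bf P}(X)\subseteq\RP_G^{[d]}(X)$ with $q(x)=\bar x$ and $q(y)=\bar y$; then $q(x)=q(y)$, i.e. $\bar x=\bar y$. Thus the proximal relation of the quotient is trivial and $(X/\RP_G^{[d]}(X),G)$ is distal.

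I expect the main obstacle to be bookkeeping rather than a new idea: one must verify that each enveloping-semigroup statement and each step of the structure-theoretic induction was really established for a general topological group and not tacitly for $\Z$. In particular one should check the $G$-version of the transitivity of $(X^{[d-1]},T^{[d-1]})$ for weakly mixing $X$ that is used in Theorem \ref{wm-absolute} and Lemma \ref{wm-lemma}, and confirm that the only combinatorial input needed is the ``central $\Rightarrow$ IP'' lemma. Once these checks are made, the argument is a faithful transcription of the $\Z$-case.
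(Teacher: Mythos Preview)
Your proposal is correct and is exactly the approach the paper intends: the paper offers no separate proof of this theorem beyond the phrase ``by our proof,'' meaning one re-runs the $\Z$-argument verbatim with the abelian group $G$, and you have correctly isolated the single new input needed (the ``central $\Rightarrow$ IP'' lemma for abelian $G$, which the paper records just before the theorem). Your explicit derivation of distality of the quotient via lifting proximal pairs and the inclusion ${\bf P}(X)\subseteq\RP_G^{[d]}(X)$ is the intended reading of the paper's unproved ``So $(X/\RP_G^{[d]}(X),G)$ is distal.''
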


Similar to \cite{HKM} we may define
\begin{de}
Let $(X,G)$ a minimal system with $G$ being abelian. We call
$(X/\RP_G^{[d]}(X), G)$ the $d$-step nilfactor for $G$-action.
\end{de}

We think that it will be interesting to study the properties of
$(X/\RP_G^{[d]}(X), G)$ or more general group actions.

\appendix

\section{Basic facts about abstract topological
dynamics}\label{section-tds}

In this appendix we recall some basic definitions and results in
abstract topological systems. For more details, see \cite{Au88, E69,
G76, G96, V77, Vr}.

\subsection{Topological transformation groups}

A {\em topological dynamical systems} is a triple $\X=(X, \t, \Pi)$,
where $X$ is a compact $T_2$ space, $\t$ is a $T_2$ topological
group and $\Pi: T\times X\rightarrow X$ is a continuous map such
that $\Pi(e,x)=x$ and $\Pi(s,\Pi(t,x))=\Pi(st,x)$. We shall fix $\t$
and suppress the action symbol. In lots of literatures, $\X$ is also
called a {\em topological transformation group} or a {\em flow}.
Usually we omit $\Pi$ and denote a system by $(X,\t)$.

\medskip

Let $(X,\t)$ be a system and $x\in X$, then $\O(x,\t)$ denotes the
{\em orbit} of $x$, which is also denoted by $\t x$. A subset
$A\subseteq X$ is called {\em invariant} if $t a\subseteq A$ for all
$a\in A$ and $t\in \t$. When $Y\subseteq X$ is a closed and
$\t$-invariant subset of the system $(X, \t)$ we say that the system
$(Y, \t)$ is a {\em subsystem} of $(X, \t)$. If $(X, \t)$ and $(Y,
\t)$ are two dynamical systems their {\em product system} is the
system $(X \times Y, \t)$, where $t(x, y) = (tx, ty)$.

\medskip

A system $(X,\t)$ is called {\em minimal} if $X$ contains no proper
closed invariant subsets. $(X,\t)$ is called {\em transitive} if
every invariant open subset of $X$ is dense. An example of an
transitive system is a {\em point-transitive} system, which is a
system with a dense orbit. It is easy to verify that a system is
minimal iff every orbit is dense. The system $(X,\t)$ is {\em weakly
mixing} if the product system $(X \times X,\t)$ is transitive.

\medskip

A {\em homomorphism} (or {\em extension}) of systems $\pi :
(X,\t)\rightarrow (Y,\t)$ is a continuous onto map of the phase
spaces such that $\pi(tx)=t\pi(x)$ for all $t\in \t, x\in X$. In
this case one says that $(Y,\t)$ is a {\em factor} of $(X,\t)$ and
also that $(X,\t)$ is an {\em extension} of $(Y,\t)$. Define
\begin{equation*}
    R_\pi=\{(x_1,x_2): \pi(x_1)=\pi(x_2)\},
\end{equation*}
then $Y=X/ R_\pi$. For $n\ge 2$, define
\begin{equation*}
    R_\pi^n=\{(x_1,x_2,\ldots, x_n)\in X^n: \pi(x_1)=\pi(x_2)=\ldots=\pi(x_n)\},
\end{equation*}
and let $R^1_\pi=X$.

\subsection{Enveloping semigroups}
Given a system $(X,\t)$ its {\em enveloping semigroup} or {\em Ellis
semigroup} $E(X,\t)$ is defined as the closure of the set $\{t: t\in
\t\}$ in $X^X$ (with its compact, usually non-metrizable, pointwise
convergence topology). For an enveloping semigroup, $E\rightarrow
E:$ $p\mapsto pq$ and $p\mapsto tp$ is continuous for all $q\in E$
and $t\in \t$. Note that $(X^X,\t)$ is a system and $(E(X,\t),\t)$
is its subsystem.

\medskip

Let $(X,\t),(Y,\t)$ be systems and $\pi: X\rightarrow Y$ be an
extension. Then there is a unique continuous semigroup homomorphism
$\pi^* : E(X,\t)\rightarrow E(Y,\t)$ such that
$\pi(px)=\pi^*(p)\pi(x)$ for all $x\in X,p\in E(X,\t)$. When there
is no confusion, we usually regard the enveloping semigroup of $X$
as acting on $Y$: $p\pi(x)=\pi(px)$ for $x\in X$ and $p\in E(X,\t)$.

\subsection{Idempotents and ideals}

For a semigroup the element $u$ with $u^2=u$ is called an {\it
idempotent}. Ellis-Namakura Theorem says that for any enveloping
semigroup $E$ the set $J(E)$ of idempotents of $E$ is not empty
\cite{E69}. A non-empty subset $I \subset E$  is a {\em left ideal}
(resp. {\em right ideal}) if it $EI \subseteq I$ (resp. {\em $IE
\subseteq I$}). A {\em minimal left ideal} is the left ideal that
does not contain any proper left ideal of $E$. Obviously every left
ideal is a semigroup and every left ideal contains some minimal left
ideal.

\medskip
We can introduce a quasi-order (a reflexive, transitive relation)
$<_L$ on the set $J(E)$  by defining $v <_L u $ if and only if
$vu=v$. If $v<_L u$ and $u <_L v$ we say that $u$ and $v$ are {\em
equivalent} and write $u \sim_L v$. Similarly, we define $<_R$ and
$\sim_R$. An idempotent $u \in J(E)$ is {\em minimal} if $v \in
J(E)$ and $v<_L u$ implies $u<_L v$. The following results are
well-known \cite{EEN, FK89}: let $L$ be a left ideal of enveloping
semigroup $E$ and $u \in J(E)$. Then there is some idempotent $v$ in
$Lu$ such that $v<_Ru$ and $v<_Lu$; an idempotent is minimal if and
only if it is contained in some minimal left ideal.

\medskip

Minimal left ideals have very rich algebraic properties. For
example,
\begin{prop}\label{left-ideal}
Let $I$ be a minimal left ideal, then
\begin{enumerate}
  \item $I=\bigcup_{u\in J(I)} uI$ is its partition and every $uI$ is a
group with identity $u\in J(I)$.
  \item All minimal
idempotents in the same minimal left ideal are equivalent to each
other, i.e. for all $u,v\in J(I)$, $u\sim_L v$.
\end{enumerate}

\end{prop}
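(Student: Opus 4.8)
The plan is to extract everything from the two structural inputs available in the excerpt, namely the Ellis--Namakura theorem (every \emph{closed} subsemigroup of an enveloping semigroup contains an idempotent) and the right-continuity of multiplication ($q\mapsto qp$ is continuous for each fixed $p$), combined with the minimality of $I$. The first thing I would record is that within $I$ multiplication is ``absorbing on the right''. For any $p\in I$, the set $Ep$ is a left ideal contained in $I$ (since $Ep\subseteq EI\subseteq I$ and $E(Ep)\subseteq Ep$), and it is compact as the continuous image of $E$, hence closed; by minimality of $I$ we get $Ep=I$, so in particular $I$ is closed. The same argument applied to $Ip$ gives the key identity $Ip=I$ for every $p\in I$, and this identity drives the whole proof.

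Next I would manufacture idempotents inside $I$ and identify the sets $uI$. Fixing $p\in I$, the identity $Ip=I$ makes $A=\{q\in I: qp=p\}$ nonempty; it is closed because $q\mapsto qp$ is continuous, and it is a subsemigroup because $(q_1q_2)p=q_1(q_2p)=q_1p=p$. Ellis--Namakura then supplies an idempotent $u\in A\subseteq I$ with $up=p$, so $p=up\in uI$. This shows simultaneously that $J(I)\neq\emptyset$ and that $\bigcup_{u\in J(I)}uI$ covers $I$. To see each $uI$ is a group with identity $u$, I would note $u$ acts as a left identity ($u(up)=up$) and that every $up\in uI$ has a left inverse relative to $u$: since $up\in I$ we again have $I(up)=I$, so some $r\in I$ gives $r(up)=u$, and then $ur\in uI$ satisfies $(ur)(up)=u(r(up))=u$. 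The elementary fact that a semigroup with a left identity in which every element has a left inverse is a group then closes part (1)'s group assertion.

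For the partition it remains to prove disjointness: if $w\in uI\cap vI$ with $u,v\in J(I)$, then $u=v$. Here I would use that $u$ and $v$ are two-sided identities of their groups, so $uw=w=wu$ and $vw=w=wv$, and that $w$ has an inverse $w_1$ in $uI$ and an inverse $w_2$ in $vI$. Computing $vu$ two ways, $vu=v(ww_1)=(vw)w_1=ww_1=u$ and $vu=(w_2w)u=w_2(wu)=w_2w=v$, forces $u=v$. For part (2), given $u,v\in J(I)$ I would set $w=vu$, observe $w\in vI$ (since $vw=v(vu)=w$) and $wu=w$ (since $u^2=u$), and cancel in the group $vI$: multiplying $wu=w$ on the left by the inverse $w^{-1}\in vI$ gives $(w^{-1}w)u=v$, i.e.\ $vu=v$, which is $v<_L u$; the symmetric computation gives $uv=u$, i.e.\ $u<_L v$, so $u\sim_L v$.

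The only genuinely delicate point is the first one: everything rests on $Ep=I$ and $Ip=I$, which is precisely where minimality of the left ideal and right-continuity (needed to know $Ep$ and the fixing sets $A$ are closed, so that Ellis--Namakura applies) are used. After that the argument is just short one-sided bookkeeping with identities and inverses; the main thing to guard against is invoking Ellis--Namakura on a set that has not been verified to be a closed subsemigroup, and confusing left- with right-sided identities when checking the group axioms and the disjointness of the cosets.
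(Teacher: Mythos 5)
Your proof is correct. There is, however, no in-paper argument to compare it against: the paper records Proposition \ref{left-ideal} as a standard fact of Ellis semigroup theory without proof (the surrounding text points to \cite{EEN, FK89}, and the material is classical, found in Ellis's and Auslander's books). Your route is exactly the classical one from those sources: derive $Ep=I$ and $Ip=I$ from minimality of the left ideal (with compactness of $Ep$ giving closedness of $I$), apply Ellis--Namakura to the fixing semigroup $A=\{q\in I: qp=p\}$ to produce an idempotent $u$ with $up=p$, verify that $uI$ is a semigroup with left identity $u$ in which every element has a left inverse (hence a group), and finish the partition claim and the equivalence $u\sim_L v$ by cancellation in these groups. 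Each step checks out: $A$ is nonempty by $Ip=I$, closed since $I$ is closed and $q\mapsto qp$ is continuous, and a subsemigroup; the group-theoretic lemma you invoke (left identity plus left inverses implies group) is the correct one-sided version, and your computations $vu=v(ww_1)=ww_1=u$ and $vu=w_2(wu)=v$ for disjointness, and the cancellation of $wu=w$ in $vI$ for part (2), are all valid against the paper's definition $v<_L u\Leftrightarrow vu=v$. Two points you leave implicit are genuinely needed but immediate: $uI$ is closed under multiplication, since $(up)(uq)=u\bigl(p(uq)\bigr)$ and $p(uq)\in EI\subseteq I$; and the Ellis--Namakura theorem as quoted in the paper is stated only for $E$ itself, so one must note that it applies to any nonempty closed subsemigroup of $E$ (such as your $A$), which is a compact right-topological semigroup in its own right --- a caveat you flag yourself. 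With those remarks the argument is complete.
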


\medskip
Let $(X,\t)$ be a system and $E(X,\t)$ be its enveloping semigroup.
A subset $I \subseteq E(X,\t)$ is a closed left ideal of $E(X,\t)$
iff $(I, \t)$ is a subsystem of $(E(X,\t),\t)$. And $I$ is a minimal
left ideal of $E(X,\t)$ iff $(I, \t)$ is minimal. Let $I\subset
E(X,\t)$ be a minimal left ideal. Then for all $x\in X$, $Ix=\{px:
p\in I\}$ is a minimal subset of $X$. Especially if $(X,\t)$ is
minimal itself, then $X=Ix$ for all $x\in X$. It follows that
\begin{prop}\label{minimal-point}
A point $x\in X$ is minimal if and only if $ux=x$ for some $u\in I$.
\end{prop}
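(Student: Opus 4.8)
The plan is to deduce both implications from the fact, established in the line immediately preceding the statement, that $Ix=\{px:p\in I\}$ is a \emph{minimal} subset of $X$ for every $x\in X$ whenever $I$ is a minimal left ideal of $E(X,\t)$. With that input in hand the argument is short; the only real work is in correctly matching up the two senses of minimality, namely ``$x$ is a minimal point'' (meaning $\overline{\O(x,\t)}$ is a minimal subsystem) against the algebraic condition ``$x\in Ix$''.

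For the direction ($\Leftarrow$), I would argue as follows. Suppose $ux=x$ for some $u\in I$. Then $x=ux\in Ix$. Since $Ix$ is a minimal set and $\overline{\O(x,\t)}$ is a nonempty closed invariant subset of it, minimality forces $\overline{\O(x,\t)}=Ix$. Hence $\overline{\O(x,\t)}$ is minimal, i.e. $x$ is a minimal point. Note this uses only that $u$ lies in $I$, not that it is idempotent.

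For the direction ($\Rightarrow$), I would first record the standard identification $Ex=\overline{\O(x,\t)}$: the evaluation map $E(X,\t)\to X,\ p\mapsto px$ is continuous and $E(X,\t)$ is compact, so $Ex$ is a closed set that contains the orbit $\O(x,\t)$ and is contained in its closure (each $p\in E(X,\t)$ being a pointwise limit of elements of $\t$), forcing $Ex=\overline{\O(x,\t)}$. Now assume $x$ is minimal, so $Ex$ is a minimal set. Since $I$ is a closed left ideal, $Ix\subseteq Ex$ is a nonempty closed invariant subset, and indeed a minimal one; minimality of $Ex$ then yields $Ix=Ex\ni x$. In particular $x=ux$ for some $u\in I$, as required.

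The step I expect to need the most care is precisely this bookkeeping in the ($\Rightarrow$) direction, the identity $Ex=\overline{\O(x,\t)}$ together with the verification that $Ix$ is a genuine closed, invariant, nonempty subsystem sitting inside it; there is no deep obstacle, since the already-cited minimality of $Ix$ closes the argument once this is in place. Finally, to recover the sharper form actually invoked later (e.g. in Lemma \ref{distal-lemma1}, where a \emph{minimal idempotent} $u$ with $ux=x$ is used), I would apply Proposition \ref{left-ideal}: writing $I=\bigcup_{v\in J(I)}vI$ with each $vI$ a group of identity $v$, the element $u$ lies in some $vI$ with $v\in J(I)$ and $vu=u$, whence $vx=v(ux)=(vu)x=ux=x$ for the minimal idempotent $v$.
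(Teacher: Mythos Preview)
Your argument is correct and is precisely the short deduction the paper intends: the proposition is stated with no explicit proof, preceded only by the observation that $Ix$ is a minimal subset of $X$ and the phrase ``It follows that''. Your two directions (using $x\in Ix$ when $ux=x$, and $Ix\subseteq Ex=\overline{\O(x,\t)}$ together with minimality of $Ex$ for the converse) are exactly how one unpacks that remark, and your final paragraph upgrading $u$ to a minimal idempotent via Proposition~\ref{left-ideal} matches how the result is applied later.
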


\subsection{Universal point transitive system and universal minimal system}

For fixed $\t$,  there exists a universal point-transitive system
$\mathcal{S}_\t=(S_\t,\t)$ such that $\t$ can densely and
equivariantly be embedded in $S_\t$. The multiplication on $\t$ can
be extended to a multiplication on $S_\t$, then $S_\t$ is a closed
semigroup with continuous right translations. The universal minimal
system $\mathfrak{M}=(\M,\t)$ is isomorphic to any minimal left
ideal in $S_\t$ and $\M$ is a closed semigroup with continuous right
translations. Hence $J=J(\M)$ of idempotents in $\M$ is nonempty.
Moreover, $\{v\M:v\in J\}$ is a partition of $\M$ and every $v\M$ is
a group with unit element $v$. Sometimes if there are chances being
confusion then we will use $\M_\t$ instead of $\M$.

\medskip

The sets $S_\t$ and $\M$ act on $X$ as semigroups and $S_\t
x=\overline{\t x}$, while for a minimal system $(X,\t)$ we have $\M
x=\overline{\t x}=X$ for every $x\in X$. A necessary and sufficient
condition for $x$ to be minimal is that $ux=x$ for some $u\in J$.

\subsection{All kinds of extensions}

Two points $x_1$ and $x_2$ are called {\em proximal} iff
$$\overline{\t (x_1,x_2)}\cap \Delta_X\neq \emptyset.$$
Let $\U_X$ be the unique uniform structure of $X$, then
$${\bf P}={\bf P}(X,\t) = \bigcap \big\{ \t \a : \a\in \U_X \big\}$$
is the collection of proximal pairs in $X$, the {\em proximal
relation}.

\begin{prop}\label{proximal-Ellis}
Let $(X,\t)$ be a system. Then
\begin{enumerate}
  \item $x_1$, $x_2$ are proximal in $(X,\t)$ iff
$px_1 = px_2$ for some $p\in E(X,\t)$.
  \item If $x\in X$ and $u$ is an idempotent in $E(X,\t)$, then $(x,ux)\in {\bf
  P}$.
  \item If $x\in X$, then there is an minimal point $x'\in
  \overline{\O(x,\t)}$ such that $(x,x')\in {\bf P}$.
  \item If $(X,T)$ is minimal, then $(x,y)\in {\bf P}$ if and only
  if there is some minimal idempotent $u\in E(X,\t)$ such that
  $y=ux$.
\end{enumerate}

\end{prop}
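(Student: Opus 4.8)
The plan is to push everything through part~(1), which is the bridge turning the topological definition of proximality into a statement about the enveloping semigroup; once (1) is available, parts~(2)--(4) become short arguments that combine it with the ideal structure of $E(X,\t)$ recorded in Proposition~\ref{left-ideal} and the characterization of minimal points in Proposition~\ref{minimal-point}.

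For~(1) I would use directly that $E(X,\t)$ is the closure of $\t$ inside the compact space $X^X$ with the topology of pointwise convergence. If $px_1=px_2$ for some $p\in E(X,\t)$, choose a net $t_i\in\t$ with $t_i\to p$; then $(t_ix_1,t_ix_2)\to(px_1,px_2)\in\D_X$, so $\overline{\t(x_1,x_2)}$ meets the diagonal and the pair is proximal. Conversely, given $(x_1,x_2)$ proximal, pick a net $t_i$ with $(t_ix_1,t_ix_2)\to(z,z)$, pass to a subnet with $t_i\to p\in E(X,\t)$ (possible since $X^X$ is compact and $E(X,\t)$ is closed), and read off $px_1=z=px_2$. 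Part~(2) is then immediate: if $u$ is idempotent, then $u\cdot x$ and $u\cdot(ux)=u^2x=ux$ coincide, so~(1) gives $(x,ux)\in{\bf P}$.

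For~(3) I would fix a minimal left ideal $L\subseteq E(X,\t)$, select a minimal idempotent $u\in J(L)$, and put $x'=ux$. Then $(x,x')\in{\bf P}$ by~(2); also $x'=ux\in E(X,\t)x=\overline{\O(x,\t)}$, because every element of $E(X,\t)$ is a pointwise limit of elements of $\t$; and $x'$ is a minimal point since $ux'=u^2x=ux=x'$ with $u$ a minimal idempotent, so Proposition~\ref{minimal-point} applies. The backward implication of~(4) is again just~(2): if $y=ux$ with $u$ a minimal idempotent then $(x,y)\in{\bf P}$.

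The one genuinely substantive step is the forward implication of~(4). Starting from a $p$ with $px=py$ provided by~(1), the goal is to realize $y$ as $ux$ for a minimal idempotent $u$. I would look at the left ideal $E(X,\t)p$ and choose a minimal left ideal $L_0\subseteq E(X,\t)p$; since each $s\in L_0$ has the form $s=rp$, it satisfies $sx=r(px)=r(py)=sy$, so $x$ and $y$ agree on all of $L_0$. As $X$ is minimal, $y$ is a minimal point, so by Proposition~\ref{minimal-point} applied to the ideal $L_0$ there is an idempotent $u\in J(L_0)$ with $uy=y$; then $y=uy=ux$, finishing the proof. I expect the delicate point to be exactly this last move: the idempotent $u$ must be chosen inside the specific minimal left ideal $L_0$ on which $x$ and $y$ agree, and not in some other minimal left ideal, which is what lets $sx=sy$ be applied to $s=u$. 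This placement relies on the partition $L_0=\bigcup_{u\in J(L_0)}uL_0$ into groups from Proposition~\ref{left-ideal}.
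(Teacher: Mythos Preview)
The paper does not actually prove Proposition~\ref{proximal-Ellis}; it is stated in Appendix~\ref{section-tds} as part of the collection of standard facts on enveloping semigroups, with the appendix deferring to the textbooks \cite{Au88, E69, G76, G96, V77, Vr} for proofs. Your argument is correct and is precisely the standard one found in those references: part~(1) is the basic compactness argument in $X^X$, parts~(2) and~(3) are immediate from~(1), and your forward implication of~(4)---pushing $p$ into a minimal left ideal $L_0\subseteq E(X,\t)p$ so that every $s\in L_0$ satisfies $sx=sy$, and then selecting an idempotent $u\in J(L_0)$ fixing the minimal point $y$---is exactly the textbook maneuver.

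One small remark: when you invoke Proposition~\ref{minimal-point} to obtain an \emph{idempotent} $u\in J(L_0)$ with $uy=y$, note that the paper's formulation literally says only ``$ux=x$ for some $u\in I$''. Of course the upgrade to an idempotent is immediate (the set $\{q\in L_0:qy=y\}$ is a nonempty closed subsemigroup, hence contains an idempotent by Ellis--Namakura), so there is no gap, but in a self-contained write-up you might make that step explicit.
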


The extension $\pi: (X, \t)\rightarrow (Y, \t)$ is called {\em
proximal} iff $R_\pi\subseteq {\bf P}$ iff ${\bf P}_\pi=\bigcap \{\t
\a\cap R_\pi: \a\in \U_X\}= R_\pi$. $\pi$ is {\em distal} if ${\bf
P}_\pi=\Delta_X$. $\pi$ is a {\em highly proximal} (HP) extension if
for every closed subset $A$ of $X$ with $\pi(A) = Y$, necessarily $A
= X$. It is easy to see that a HP extension is proximal. In the
metric case an extension $\pi: (X,T)\rightarrow (Y,T)$ of minimal
systems is HP iff it is an {\em almost 1-1 extension}, that is the
set $\{y\in Y: $ $\pi^{-1}(y)$ is a singleton $\}$ is a dense
$G_\delta$ subset of $Y$.

An extension $\pi: X\rightarrow Y$ of systems is called {\em
equicontinuous} or {\em almost periodic} if for every $\a\in \U_X$
there is $\b\in \U_X$ such that $\t \a\cap R_\pi\subseteq \b$.

In the metric case an equicontinuous extension is also called an
{\em isometric extension}. The extension $\pi$ is a {\em weakly
mixing extension} when $(R_\pi, \t)$ as a subsystem of the product
system $(X\times X, \t)$ is transitive.

\subsection{Vietoris topology and circle operation}

Let $2^X$ be the collection of nonempty closed subsets of $X$
endowed with the Vietoris topology. Note that a base for the
Vietoris topology on $2^X$ is formed by the sets
$$<U_1,U_2,\cdots,U_n>=\{A\in 2^X: A\subseteq \bigcup_{i=1}^nU_i\
\text{and $A\cap U_i\neq \emptyset$ for every $i$}\},$$ where $U_i$
is open in $X$. Then $(2^X, \t)$ defined by $tA=\{ta:a\in A\}$ is a
system again, and $S_\t$ acts on $2^X$ too. To avoid ambiguity we
denote the action of $S_\t$ on $2^X$ by the {\em circle operation}
as follows. Let $p\in S_\t$ and $D\in 2^X$, then define $p\c
D=\lim_{2^X} t_i D$ for any net $\{t_i\}_i$ in $\t$ with $t_i\to p$.
Moreover
\begin{equation*}
    p\c D=\{x\in X: \text{there are $d_i\in D$ with $x=\lim_i t_id_i$}\}
\end{equation*}
for any net $t_i\to p$ in $S_\t$. We always have $pD\subseteq p\c
D$.

\subsection{Ellis group}
The group of automorphisms of $(\M,\t)$, $G = {\rm Aut} (\M,\t)$ can
be identified with any one of the groups $u\M$ ($u\in J$) as
follows: with $\a\in uM$ we associate the automorphism $\hat{\a}:
(\M,\t)\rightarrow (\M,\t)$ given by right multiplication
$\hat{\a}(p)=p\a, p\in \M$. The group $G$ plays a central role in
the algebraic theory. It carries a natural $T_1$ compact topology,
called by Ellis the {\em $\tau$-topology}, which is weaker than the
relative topology induced on $G = u\M$ as a subset of $\M$.

\medskip

It is convenient to fix a minimal left ideal $\M$ in $S_\t$ and an
idempotent $u\in  \M$. As explained above we identify $G$ with $u\M$
and for any subset $A \subseteq G$, {\em $\tau$-topology} is
determined by
$$\cl_\tau A=u(u\c A)=G\cap (u\c A).$$
Also in this way we can consider the ``action" of $G$ on every
system $(X,\t)$ via the action of $S_\t$ on X. With every minimal
system $(X,T)$ and a point $x_0\in uX=\{x\in X: ux=x\}$ we associate
a $\tau$-closed subgroup $$\mathfrak{G}(X,x_0)=\{\a\in G: \a
x_0=x_0\}$$ the {\em Ellis group} of the pointed system $(X, x_0)$.

For a homomorphism $\pi: X\rightarrow Y$ with $\pi(x_0)=y_0$ we have
$$\mathfrak{G}(X, x_0)\subseteq\mathfrak{G}(Y, y_0).$$
It is easy to see that $u\pi^{-1}(y_0)=\mathfrak{G}(Y, y_0) x_0$.

\medskip

For a $\tau$-closed subgroup $F$ of $G$ the derived group $H(F)=F'$
is given by:
$$H(F)=F'= \bigcap\big\{ \cl_\tau O : O \ \text{is a
$\tau$-open neighborhood of $u$ in $F$ }\big \}.$$ $H(F)$ is a
$\tau$-closed normal subgroup of $F$ and it is characterized as the
smallest $\tau$-closed subgroup $H$ of $F$ such that $F/H$ is a
compact Hausdorff topological group. In particular, for an abelian
$\t$, the topological group $G/ H(G)$ is the {\em Bohr
compactification} of $\t$.

\subsection{Structure of minimal systems}

Let $\pi: (X,\t)\rightarrow (Y,\t)$ be a homomorphism of minimal
systems with $x_0\in X$ and $y_0=\pi(x_0)\in Y$. We say that $\pi$
is a {\em RIC} (relatively incontractible) extension if for every $y
= py_0\in Y$, $p$ an element of $\M$,
$$\pi^{-1}(y)=p\c u\pi^{-1}(y_0)=p\c Fx_0,$$ where $F =
\mathfrak{G}(Y, y_0)$. One can show that the extension $\pi : X \to
Y$ is RIC if and only if it is open and for every $n \ge 1$ the
minimal points are dense in the relation $R^n_\pi$. Note that every
distal extension is RIC. It then follows that every distal extension
is open.

\medskip

We say that a minimal system $(X, \t)$ is a {\em strictly PI system}
if there is an ordinal $\eta$ (which is countable when $X$ is
metrizable) and a family of systems
$\{(W_\iota,w_\iota)\}_{\iota\le\eta}$ such that (i) $W_0$ is the
trivial system, (ii) for every $\iota<\eta$ there exists a
homomorphism $\phi_\iota:W_{\iota+1}\to W_\iota$ which is either
proximal or equicontinuous (isometric when $X$ is metrizable), (iii)
for a limit ordinal $\nu\le\eta$ the system $W_\nu$ is the inverse
limit of the systems $\{W_\iota\}_{\iota<\nu}$,  and (iv)
$W_\eta=X$. We say that $(X,\t)$ is a {\em PI-system} if there
exists a strictly PI system $\tilde X$ and a proximal homomorphism
$\theta:\tilde X\to X$.

If in the definition of PI-systems we replace proximal extensions by
almost one-to-one extensions (or by highly proximal extensions in
the non-metric case) we get the notion of HPI {\em systems}. If we
replace the proximal extensions by trivial extensions (i.e.\ we do
not allow proximal extensions at all) we have I {\em systems}. These
notions can be easily relativized and we then speak about I, HPI,
and PI extensions.

\begin{thm}[Furstenberg]
A metric minimal system is distal if and only if  it is an I-system.
\end{thm}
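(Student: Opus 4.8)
This is Furstenberg's structure theorem for distal systems, and I would establish the two implications separately; the forward implication (I-system $\Rightarrow$ distal) is routine, while the converse carries all the weight.

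For the implication that every I-system is distal, I would induct transfinitely along the defining tower $\{(W_\iota,w_\iota)\}_{\iota\le\eta}$. The trivial system $W_0$ is distal. At a successor ordinal the map $\phi_\iota\colon W_{\iota+1}\to W_\iota$ is equicontinuous, hence a distal extension (${\bf P}_{\phi_\iota}=\Delta$); and a distal extension of a distal system is distal, because if $(a,b)\in{\bf P}(W_{\iota+1},\t)$ then $(\phi_\iota a,\phi_\iota b)\in{\bf P}(W_\iota,\t)=\Delta$, so $(a,b)\in R_{\phi_\iota}\cap{\bf P}(W_{\iota+1},\t)={\bf P}_{\phi_\iota}=\Delta$ and $a=b$. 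At a limit ordinal $W_\nu=\lim_{\longleftarrow}\{W_\iota\}_{\iota<\nu}$, and an inverse limit of distal systems is distal since a proximal pair of the limit projects to a proximal, hence diagonal, pair in every coordinate. Therefore $W_\eta=X$ is distal.

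For the converse, let $(X,\t)$ be a metric minimal distal system and build the isometric tower greedily: starting from the trivial factor, repeatedly adjoin nontrivial equicontinuous extensions that are still factors of $X$, taking inverse limits at limit stages. Any two I-system factors are dominated by a common I-system factor, so this process yields a maximal I-system factor $Z$ of $X$, and metrizability forces it to run along a countable ordinal. It remains to prove $Z=X$. If $Z\ne X$, then since $X$ is distal the map $\pi\colon X\to Z$ is a distal extension of minimal systems with $R_\pi\ne\Delta_X$. I would then invoke the key lemma that \emph{any nontrivial distal extension of minimal systems admits a nontrivial intermediate equicontinuous extension} $Z<Z'\le X$ with $Z'\to Z$ isometric; adjoining $Z'$ to the tower contradicts the maximality of $Z$, so $Z=X$ is an I-system.

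The whole difficulty is concentrated in that key lemma, which I would prove with the enveloping-semigroup apparatus of the appendix (distality being equivalent to $E(X,\t)$ being a group). Fixing a minimal left ideal $\M$ and an idempotent $u$, the extension $\pi$ corresponds to an inclusion $F=\mathfrak{G}(X,x_0)\subseteq A=\mathfrak{G}(Z,z_0)$ of $\tau$-closed subgroups, with $F\ne A$ precisely because $R_\pi\ne\Delta_X$ (equivalently $u\pi^{-1}(z_0)=Ax_0\ne\{x_0\}$). Since $A/H(A)$ is a compact Hausdorff topological group, the factor $Z'$ attached to $B=\cl_\tau\big(H(A)F\big)$ satisfies $H(A)\subseteq B\subseteq A$ and so $Z'\to Z$ is equicontinuous; the entire issue is thus the \emph{nontriviality} $B\subsetneq A$, i.e. that the relative maximal equicontinuous factor of $X$ over $Z$ is not $Z$ itself. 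Here I would use that $\pi$, being distal, is RIC and open, together with the dichotomy from the PI theory: if the relative maximal equicontinuous factor collapsed ($Z'=Z$) then $\pi$ would be a weakly mixing extension, i.e. $(R_\pi,\t)$ would be transitive. But $(R_\pi,\t)$ is a subsystem of the distal system $(X\times X,\t)$, hence distal and semisimple, and a transitive semisimple system is minimal; as $\Delta_X$ is a closed invariant subset of $R_\pi$ this forces $R_\pi=\Delta_X$, contradicting $R_\pi\ne\Delta_X$. The main obstacle is precisely this dichotomy — that triviality of the relative maximal equicontinuous factor implies relative weak mixing — which rests on the $\tau$-topology structure theory of RIC extensions; granting it, the argument closes, with metrizability supplying the countable length of the tower.
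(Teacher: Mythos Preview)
The paper does not prove this statement at all: Furstenberg's structure theorem is quoted in the appendix as a classical background result, with no argument given. So there is no ``paper's own proof'' to compare against, and your proposal is not a reconstruction of something in the text but an independent sketch of a proof of a cited theorem.

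That said, your sketch is a legitimate route. The easy direction is fine. For the hard direction, the strategy of producing a maximal I-factor $Z$ and then showing $Z=X$ via a relative Furstenberg--Zimmer dichotomy (nontrivial distal extension $\Rightarrow$ nontrivial intermediate isometric extension, else the extension is relatively weakly mixing, which is impossible over a distal system since $R_\pi$ is then distal, semisimple, and transitive, hence minimal, forcing $R_\pi=\Delta_X$) is standard in the Ellis--Glasner framework. Two cautions are worth flagging. First, you should be careful about circularity: the PI/structure machinery you invoke (RIC extensions, the $\tau$-topology dichotomy) is historically \emph{downstream} of Furstenberg's 1963 theorem, and some presentations of that machinery already assume or absorb the distal structure theorem; if you want a self-contained proof you must check that the specific dichotomy you use (triviality of the relative equicontinuous factor $\Rightarrow$ relative weak mixing) is established independently. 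Second, Furstenberg's original argument did not go through enveloping semigroups or the $\tau$-topology at all; it worked directly with relative invariant measures and the relative Kronecker factor (isometric extensions realized as skew products by compact homogeneous spaces). Your approach is the modern algebraic one and is correct once the dichotomy is in hand, but it is not the proof Furstenberg gave.
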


\begin{thm}[Veech]
A metric minimal dynamical system is point distal if and only if  it
is an HPI-system.
\end{thm}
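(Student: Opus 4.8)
The plan is to prove the two implications separately, in each case exploiting the tower structure of HPI-systems for the easy direction and the general structure theorem \ref{structure} for the hard one. Throughout I would work in the enveloping semigroup, using Proposition \ref{proximal-Ellis} and the standard fact that a minimal point $x$ is \emph{distal} exactly when $ux=x$ for every minimal idempotent $u\in E(X,T)$. I would also invoke the known (Veech) fact that in a minimal point-distal flow the set of distal points is a dense $G_\delta$.

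First I would treat the direction that an HPI-system is point distal, by transfinite induction along the defining family $\{W_\iota\}_{\iota\le\eta}$. The trivial system $W_0$ is point distal. If $Y$ is point distal and $\pi:X\to Y$ is isometric, then $\pi$ is a distal extension: for a distal $y_0\in Y$ and any $x_0\in\pi^{-1}(y_0)$, a point proximal to $x_0$ projects to a point proximal to $y_0$, hence to $y_0$, hence lies in $\pi^{-1}(y_0)$, and distality of the extension forces it to equal $x_0$; so $x_0$ is distal. If $\pi:X\to Y$ is almost one-to-one with $Y$ point distal, I would intersect the dense $G_\delta$ of distal points of $Y$ with the dense $G_\delta$ of singleton-fibre points to obtain a distal $y_0$ with $\pi^{-1}(y_0)=\{x_0\}$; then any point proximal to $x_0$ projects into $\{y_0\}$ and so equals $x_0$. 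At a limit ordinal one passes to the inverse limit, using that a point is distal iff all its projections are distal and a Baire-category argument in the compact metric inverse limit to produce a point all of whose projections are distal.

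The substance is the converse. I would invoke the structure theorem \ref{structure}, in its metric refinement, to obtain a minimal $X_\infty$ with an \emph{almost one-to-one} factor map $\pi:X_\infty\to X$ and a RIC weakly mixing factor map $\phi:X_\infty\to Y_\infty$ onto a strictly PI system (this is precisely where almost one-to-one, and hence HPI rather than merely PI, must be brought in, since a distal point cannot in general be lifted through a genuinely proximal extension). Choosing a distal $x_0\in X$ with singleton $\pi$-fibre lifts it to a distal $\tilde x_0\in X_\infty$, so $X_\infty$ is point distal. The key claim is that $\phi$ is trivial. If not, all $\phi$-fibres are nondegenerate, and Theorem \ref{wm-extension} applied with $n=2$ to the fibre over $y_0=\phi(\tilde x_0)$ yields a transitive point $(a,b)$ of $(R_\phi^2,T\times T)$ with $a,b\in\phi^{-1}(y_0)$; as its orbit closure $R_\phi^2$ meets the diagonal, $(a,b)$ is a proximal pair with $a\neq b$. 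The aim is to transport this proximality onto $\tilde x_0$ itself, producing $c\neq\tilde x_0$ in $\phi^{-1}(y_0)$ proximal to $\tilde x_0$, contradicting distality and forcing $Y_\infty=X_\infty$ to be strictly PI, whence $X$ is PI. Finally, point-distality of $X$ forces each proximal extension in the resulting PI-tower to be almost one-to-one, since a proximal extension that is not almost one-to-one carries, over a residual set of base points, fibres with genuine proximal pairs, which cannot coexist with a residual set of distal points; this upgrades the tower to an HPI-tower.

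I expect the main obstacle to be exactly the middle step of the converse: showing that the \emph{distal} point $\tilde x_0$ is itself proximal to a distinct point of its $\phi$-fibre, rather than merely that some proximal pair exists in the fibre, since proximality is not preserved under passage to the orbit closure of $(a,b)$. The clean route is algebraic: using that $\phi$ is RIC one writes the fibre as $\phi^{-1}(y_0)=u\circ F\tilde x_0$ with $F=\mathfrak{G}(Y_\infty,\phi(\tilde x_0))$, then uses the transitivity of $R_\phi^n$ for all $n$ (Theorem \ref{wm-extension}) together with the idempotent description of the proximal relation to show that the proximal cell of $\tilde x_0$ inside its fibre is dense. Establishing that this cell is genuinely nontrivial at the specific point $\tilde x_0$, and not merely generically, is the delicate part, and is where the weak-mixing hypothesis on $\phi$ and the full RIC machinery must be used.
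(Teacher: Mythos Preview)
The paper does not prove this theorem. It is stated in the appendix (Section \ref{section-tds}) purely as background, attributed to Veech and implicitly referred to \cite{V77}; no argument is given, just as the preceding Furstenberg structure theorem for distal systems is stated without proof. So there is nothing to compare your proposal against.

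For what it is worth, your sketch follows the standard outline of Veech's argument: the HPI $\Rightarrow$ point-distal direction by induction along the tower is routine and your treatment is fine. For the converse you correctly identify the crux, and you are honest that you do not yet have it: from a RIC weakly mixing extension $\phi$ you can produce proximal pairs in fibres, but you need a nontrivial proximal pair \emph{through the specific distal point} $\tilde x_0$, and Theorem \ref{wm-extension} alone does not give that. In Veech's original proof this step is handled not by ad hoc manipulation of a single fibre but via the algebraic characterization of the PI-tower (the Ellis group condition $AH(G)=G$) together with a careful analysis of almost one-to-one extensions; your suggestion to use the RIC description $\phi^{-1}(y_0)=u\circ F\tilde x_0$ and the idempotent picture of proximality is the right direction, but the density claim for the proximal cell at $\tilde x_0$ is exactly the hard content of the theorem and would need to be argued, not asserted. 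Your final paragraph, upgrading PI to HPI by showing each proximal link in the tower is almost one-to-one over a point-distal base, is also more delicate than you indicate: one must work with the canonical tower and use that the highly-proximal modification preserves the relevant Ellis-group data.
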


Finally we have the structure theorem for minimal systems, which we
will state in its relative form (Ellis-Glasner-Shapiro \cite{EGS},
Veech \cite{V77}, and Glasner \cite{G76}).

\begin{thm}[Structure theorem for minimal systems]\label{structure}
Given a homomorphism $\pi: X \to Y$ of minimal dynamical system,
there exists an ordinal $\eta$ (countable when $X$ is metrizable)
and a canonically defined commutative diagram (the canonical
PI-Tower)
\begin{equation*}
\xymatrix
        {X \ar[d]_{\pi}             &
     X_0 \ar[l]_{{\theta}^*_0}
         \ar[d]_{\pi_0}
         \ar[dr]^{\sigma_1}         & &
     X_1 \ar[ll]_{{\theta}^*_1}
         \ar[d]_{\pi_1}
         \ar@{}[r]|{\cdots}         &
     X_{\nu}
         \ar[d]_{\pi_{\nu}}
         \ar[dr]^{\sigma_{\nu+1}}       & &
     X_{\nu+1}
         \ar[d]_{\pi_{\nu+1}}
         \ar[ll]_{{\theta}^*_{\nu+1}}
         \ar@{}[r]|{\cdots}         &
     X_{\eta}=X_{\infty}
         \ar[d]_{\pi_{\infty}}          \\
        Y                 &
     Y_0 \ar[l]^{\theta_0}          &
     Z_1 \ar[l]^{\rho_1}            &
     Y_1 \ar[l]^{\theta_1}
         \ar@{}[r]|{\cdots}         &
     Y_{\nu}                &
     Z_{\nu+1}
         \ar[l]^{\rho_{\nu+1}}          &
     Y_{\nu+1}
         \ar[l]^{\theta_{\nu+1}}
         \ar@{}[r]|{\cdots}         &
     Y_{\eta}=Y_{\infty}
    }
\end{equation*}
where for each $\nu\le\eta, \pi_{\nu}$ is RIC, $\rho_{\nu}$ is
isometric, $\theta_{\nu}, {\theta}^*_{\nu}$ are proximal and
$\pi_{\infty}$ is RIC and weakly mixing of all orders. For a limit
ordinal $\nu ,\  X_{\nu}, Y_{\nu}, \pi_{\nu}$ etc. are the inverse
limits (or joins) of $ X_{\iota}, Y_{\iota}, \pi_{\iota}$ etc. for
$\iota < \nu$. Thus $X_\infty$ is a proximal extension of $X$ and a
RIC weakly mixing extension of the strictly PI-system $Y_\infty$.
The homomorphism $\pi_\infty$ is an isomorphism (so that
$X_\infty=Y_\infty$) if and only if  $X$ is a PI-system.
\end{thm}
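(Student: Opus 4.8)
The plan is to prove this by transfinite induction inside the universal minimal system $(\M,\t)$, using the algebraic theory of minimal flows carried by the Ellis group $G=u\M$ with its $\tau$-topology. The entire diagram will be manufactured by tracking $\tau$-closed subgroups of $G$: after fixing an idempotent $u\in J(\M)$ and base points $x_0\in uX$, $y_0=\pi(x_0)$, the extension $\pi$ is encoded by the inclusion of Ellis groups $A=\mathfrak{G}(X,x_0)\subseteq F=\mathfrak{G}(Y,y_0)$, both $\tau$-closed in $G$. The goal is to interpolate a canonical transfinite chain of minimal systems between $Y$ and $X$ in which every one-step extension is isometric, proximal, or a RIC weakly mixing extension, and then to show the chain terminates.

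First I would establish the three elementary building blocks. \textbf{(i) RIC modification.} Given any extension of minimal systems, one may pass to proximal extensions $\theta^*$ of the top and $\theta$ of the bottom, which do not change the Ellis groups (proximal extensions preserve $\mathfrak{G}(\cdot,\cdot)$), so that the induced map becomes RIC; this is exactly the role of the maps $\theta_\nu,\theta^*_\nu$ in the tower, and it uses the characterization of RIC extensions as open extensions with dense minimal points in every $R^n_\pi$. \textbf{(ii) Isometric splitting of a non-weakly-mixing RIC extension.} For a RIC extension $\pi_\nu:X_\nu\to Y_\nu$ with Ellis groups $A_\nu\subseteq F_\nu$, the criterion is that $\pi_\nu$ is weakly mixing if and only if $A_\nu H(F_\nu)=F_\nu$, where $H(F_\nu)=F_\nu'$ is the derived group in the $\tau$-topology. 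Hence if $\pi_\nu$ is \emph{not} weakly mixing, the intermediate $\tau$-closed subgroup $A_\nu H(F_\nu)\subsetneq F_\nu$ produces a nontrivial factor $\rho_{\nu+1}:Z_{\nu+1}\to Y_\nu$, and this factor is isometric precisely because $F_\nu/H(F_\nu)$ is a compact Hausdorff topological group, the defining property of the derived group. \textbf{(iii) Limits.} At a limit ordinal one takes inverse limits, which preserve minimality and pass the Ellis-group data to the join.

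With these in hand the construction is routine transfinite induction. Starting from $\pi:X\to Y$, at each successor stage one first RIC-modifies (producing $\theta_\nu,\theta^*_\nu$ and $\pi_\nu$), then, if $\pi_\nu$ is not already weakly mixing, splits off the maximal isometric factor $\rho_{\nu+1}$ via step (ii) and continues with the residual extension $\pi_{\nu+1}:X_{\nu+1}\to Y_{\nu+1}$; limit stages are handled by step (iii). Termination at some ordinal $\eta$ follows because at every successor stage that is not already weakly mixing the Ellis group strictly descends in the lattice of $\tau$-closed subgroups of $G$, and a strictly descending transfinite chain of such subgroups must stop; when $X$ is metrizable one checks that $\eta$ can be taken countable. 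The terminal map $\pi_\infty:X_\infty\to Y_\infty$ is then RIC with $A_\infty H(F_\infty)=F_\infty$ at every level, i.e.\ weakly mixing of all orders, and $\pi_\infty$ is an isomorphism exactly when no isometric step was ever needed, i.e.\ when $X$ is a PI-system.

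The main obstacle is step (ii): proving that a RIC extension which is not weakly mixing necessarily carries a nontrivial maximal isometric intermediate factor, controlled by the derived group $H(F)$. This is the technical heart of the Ellis--Glasner--Shapiro/Veech theory and is delicate because the $\tau$-topology on $G$ is only $T_1$ (not Hausdorff), so one cannot manipulate quotients naively; one must invoke the characterization of $H(F)$ as the smallest $\tau$-closed subgroup with compact Hausdorff quotient, together with the circle operation $p\c D$ on $2^{X}$, to realize the isometric factor. Since this is a deep known result (\cite{EGS,V77,G76}), in the paper itself I would simply cite it; the sketch above records the line one would follow to reconstruct the proof.
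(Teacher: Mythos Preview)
The paper does not prove this theorem at all: it is stated in the appendix as a classical result and attributed to Ellis--Glasner--Shapiro \cite{EGS}, Veech \cite{V77}, and Glasner \cite{G76}, with no proof given. Your own closing remark is therefore exactly right --- the appropriate thing to do, and what the paper does, is simply to cite it.

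That said, your sketch is a faithful outline of the standard proof from those references: encoding extensions by inclusions of $\tau$-closed Ellis subgroups, using the derived group $H(F)$ to split off a maximal isometric factor whenever the RIC extension fails to be weakly mixing (via the criterion $AH(F)=F$), RIC-modifying by proximal extensions at each step, taking inverse limits at limit ordinals, and terminating by a descending-chain argument on $\tau$-closed subgroups. You have correctly identified the technical heart (the relative Furstenberg-type theorem that a non-weakly-mixing RIC extension has a nontrivial isometric factor governed by $H(F)$) and the reason it is delicate (the $\tau$-topology is only $T_1$). There is nothing to correct; your proposal and the paper agree that this is background to be quoted, not reproved.
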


\section{Proof of Theorem \ref{wm-extension}}

First we need the so-called {\em Ellis trick} in \cite{G76}. Refer
to \cite[Lemma X.6.1]{G76} for the proof. See \cite{Gl05} for more
discussions about weakly mixing extensions. Recall that $\M$ is the
universal minimal set.

\begin{lem}[Ellis trick]\label{str-lem4-10}
Let $F$ be $\tau$ closed subgroup of $G$ acting on $\M$ by right
multiplication, $\M\times F\rightarrow \M, (p,\a)\mapsto p\a$.
\begin{enumerate}
    \item there is a minimal idempotent $\w \in
J(\M)\cap \overline{F}$ such that $\overline{\w F}$ is $F$-minimal.

    \item if $V$ is a open subset of $\overline{wF}$, then ${\rm int}_{\tau}\cl_{\tau} (V\cap wF)\neq
    \emptyset$.
\end{enumerate}
\end{lem}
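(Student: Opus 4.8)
The plan is to prove the two statements separately: for (1) I would combine a Zorn/compactness production of a minimal set with the algebraic machinery of $\M$, and for (2) I would run a finite-cover argument together with a Baire-category step inside the $\tau$-topology, which is where the actual ``trick'' sits.

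For (1): since $F$ is a $\tau$-closed subgroup of $G=u\M$, its identity is the idempotent $u$, so $u\in F\subseteq\overline{F}$, where $\overline{F}$ is the ordinary closure in $\M$. Each right translation $R_\a\colon p\mapsto p\a$ ($\a\in F$) is continuous, so $\overline{F}$ is compact and invariant under the right $F$-action ($\overline{F}F\subseteq\overline{F}$). By Zorn's lemma I would choose a minimal nonempty closed right-$F$-invariant set $N\subseteq\overline{F}$, so that $N=\overline{pF}$ for every $p\in N$. The next step is to produce an idempotent $\w\in N$: I would apply the Ellis--Namakura theorem to the compact right topological enveloping semigroup of the flow $(N,F)$ and use the group-union decomposition $\M=\bigsqcup_{v\in J(\M)}v\M$ to locate an idempotent of $\M$ inside $N$. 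Because $\M$ is a minimal left ideal, every idempotent of $\M$ is automatically a minimal idempotent, so $\w\in J(\M)\cap\overline{F}$ is minimal and $\overline{\w F}=N$ is $F$-minimal. I would also record here, for use in (2), that any two idempotents of $\M$ are $\sim_L$-equivalent (Proposition \ref{left-ideal}), whence $u\w=u$ and $\w u=\w$.

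For (2): fix $\w$ from (1) and a nonempty open $V\subseteq\overline{\w F}$. First, $\a\mapsto\w\a$ is a bijection $F\to\w F$ with inverse $x\mapsto ux$ (injectivity follows from $u\w\a=\a$), so setting $W=\{\a\in F:\w\a\in V\}$ we get $\w W=\w F\cap V$. Minimality of $(\overline{\w F},F)$ gives that the right translates $\{V\a^{-1}:\a\in F\}$ form an open cover of $\overline{\w F}$ (each $R_{\a^{-1}}$ is a homeomorphism of $\M$ preserving $\overline{\w F}$), and by compactness finitely many suffice; transporting this cover through the bijection yields $F=\bigcup_{k=1}^{n}W\a_k^{-1}$ for some $\a_1,\dots,\a_n\in F$. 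Applying $\cl_\tau$, using that right translation is a $\tau$-homeomorphism of $G$ and that $\cl_\tau$ preserves finite unions, I obtain $F=\cl_\tau F=\bigcup_{k=1}^{n}\cl_\tau(W)\a_k^{-1}$. Since $(F,\tau)$ is $\tau$-compact and a Baire space, a finite family of $\tau$-closed sets covering it cannot consist entirely of sets with empty $\tau$-interior, so some translate, and hence $\cl_\tau(W)$ itself, has nonempty $\tau$-interior. Finally I would identify $\cl_\tau(W)$ with $\cl_\tau(V\cap\w F)$ through the $u$-projection (using $u\w=u$, $\w u=\w$ and the definition $\cl_\tau A=G\cap(u\c A)$), giving ${\rm int}_{\tau}\cl_\tau(V\cap\w F)\neq\emptyset$.

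The main obstacle is the part (2) argument at its interface with the non-Hausdorff $\tau$-topology. Three points need care: that right translation on $G$ is a $\tau$-homeomorphism and that $\cl_\tau$ commutes with it; the Baire-category property of the compact $T_1$ space $(F,\tau)$, which is exactly what upgrades ``finitely many $\tau$-closed sets cover $F$'' to ``one of them has nonempty $\tau$-interior''; and the identification $\cl_\tau(V\cap\w F)=\cl_\tau(W)$ via $x\mapsto ux$. A secondary technical point lives in (1): since $\overline{F}$ is only a right topological set and need not be a subsemigroup of $\M$, the idempotent $\w$ must be extracted at the level of the enveloping semigroup and placed in $N$ using the group-union structure, rather than found naively inside $\overline{F}$.
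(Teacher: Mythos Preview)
The paper does not prove this lemma at all; it simply cites \cite[Lemma X.6.1]{G76}. Your outline follows the standard Glasner argument, and in particular your treatment of (2)---minimality $\Rightarrow$ finite cover by right translates $\Rightarrow$ Baire in the $\tau$-topology---is exactly the ``trick''. The facts you invoke there (right translation is a $\tau$-homeomorphism of $G$; $\cl_\tau$ commutes with right translation and respects finite unions; $(F,\tau)$ is a compact $T_1$ Baire space) are standard in the Ellis theory and are precisely what Glasner uses.

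Two places deserve more than a gesture. In (1), passing from an idempotent of $E(N,F)\subseteq N^N$ to an idempotent of $\M$ lying in $N$ is not automatic: an idempotent self-map $e$ of $N$ gives a fixed point $\w=e(\w)\in N$, but $e(\w)=\w$ only says $\lim \w\alpha_i=\w$ for some net $\alpha_i\in F$, which does not yield $\w^2=\w$ in $\M$ without further input; your appeal to the decomposition $\M=\bigsqcup_{v\in J(\M)}v\M$ does not by itself bridge this gap. You should either show directly that $\overline{F}$ (or $N$) carries a compact right-topological semigroup structure to which Ellis--Namakura applies, or follow Glasner's route more closely. In (2), the final identification $\cl_\tau(V\cap\w F)=\cl_\tau W$ needs an explicit line: since $u\w=u$ one has $u(\w W)=W$, and one must check that $u\c(\w W)$ and $u\c W$ have the same image under left multiplication by $u$, so that $\cl_\tau(\w W)=u(u\c(\w W))$ coincides with $\cl_\tau W=u(u\c W)$. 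This is routine from the definition of the circle operation but should be written out.
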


\begin{lem}\label{str-thm4-12}
Let $\pi : (X, \t)\rightarrow (Y, \t)$ be a RIC weakly mixing
extension of minimal systems and $u\in J(\M)$ be a minimal
idempotent. Let $x\in uX$, $y=\pi(x)$. Then for all $n\ge 2$, any
nonempty open subset $U$ of $\overline{u\pi^{-1}(y)}$ and any
transitive point $x'=(x_1', \cdots, x_{n-1}')\in R^{n-1}_\pi$ with
$\pi(x_j')=y, j=1,\cdots, n-1$, we have $\overline{\t (\{x'\}\times
U )}=R^n_\pi$.
\end{lem}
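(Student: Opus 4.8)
The plan is to argue by induction on $n$, carrying out all bookkeeping inside the universal minimal set $\M$, the group $G=u\M$, and its $\tau$-topology. Fix the minimal idempotent $u$ and write $x\in uX$, $y=\pi(x)$ (so $uy=y$), $A=\mathfrak{G}(X,x)$ and $F=\mathfrak{G}(Y,y)$, so that $A\subseteq F$. Since $\pi$ is RIC one has $u\pi^{-1}(y)=Fx$, whence $\overline{u\pi^{-1}(y)}=\overline{Fx}$ is exactly the space in which $U$ lives, and moreover $\pi$ is open. The base case $n=1$ is immediate: there is no constraint $x'$, $R^1_\pi=X$, and minimality of $(X,\t)$ gives $\overline{\t U}=X$ for every nonempty open $U$.

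For the inductive step I assume the statement for $n-1$ and set $N=\overline{\t(\{x'\}\times U)}\subseteq R^n_\pi$; the goal is $N=R^n_\pi$. Projecting to the first $n-1$ coordinates, transitivity of $x'$ shows the image contains $\overline{\t x'}=R^{n-1}_\pi$, so $N$ maps onto $R^{n-1}_\pi$ under the coordinate-dropping map $q$. The plan is then to prove that, with the first $n-1$ coordinates held at $x'$, the last coordinate can be swept across the whole reduced fiber $\overline{Fx}$, i.e. $N\supseteq\{x'\}\times\overline{Fx}$; combined with openness of $\pi$, the $\t$-invariance of $N$, the Hausdorff continuity $y'\mapsto\pi^{-1}(y')$, and the density of the orbit of $x'$, this propagates the full fiber over every point of $R^{n-1}_\pi$ and forces $N$ to contain each $q$-fiber, giving $N=R^n_\pi$. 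Thus the crux is this independent filling of the last coordinate.

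The filling is where weak mixing of the extension and the Ellis trick (Lemma \ref{str-lem4-10}) are used. In the RIC picture the fiber is the homogeneous space $Fx\cong F/A$ on which $F$ acts through the $G$-action, and in Glasner's structure theory weak mixing of $\pi$ corresponds to $F=A\,H(F)$, where $H(F)=F'$ is the $\tau$-derived group; this forces the derived group $H(F)$ alone to move $x$ densely through $\overline{Fx}$. To run this sweep \emph{without} disturbing the transitive configuration $(x_1',\dots,x_{n-1}')$, I would choose, via Lemma \ref{str-lem4-10}, a minimal idempotent $\w\in J(\M)\cap\overline{F}$ with $\overline{\w F}$ being $F$-minimal, and exploit the category property ${\rm int}_\tau\cl_\tau(V\cap\w F)\neq\emptyset$ for $\tau$-open $V$ to produce elements of $\M$ that, through the enveloping-semigroup homomorphisms attached to the coordinate projections of $R^n_\pi$, act as the identity on the first $n-1$ coordinates while running through $H(F)$ on the last. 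The induction hypothesis for $n-1$ supplies the transitive behaviour of the held coordinates, and the two motions combine to place arbitrary targets into $N$.

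I expect the genuine obstacle to be precisely this simultaneity: exhibiting group elements, equivalently minimal idempotents lying in a common minimal left ideal (Propositions \ref{left-ideal} and \ref{minimal-point}), that stabilize the already-positioned first $n-1$ coordinates yet still act densely on the fiber in the last coordinate. This needs the $\tau$-topology category argument of the Ellis trick rather than any soft compactness, and the delicate verification is that the idempotents chosen for the last-coordinate sweep genuinely fix the transitive point used for the first coordinates, so that the resulting limits land in $N$ and not merely in $R^n_\pi$.
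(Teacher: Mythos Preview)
Your proposal has the right ingredients --- the identity $F=A\,H(F)$ coming from weak mixing, the Ellis trick supplying $\tau$-interior, and the RIC formula $\pi^{-1}(py)=p\c Fx$ for the propagation step --- but you have introduced a spurious difficulty and an unnecessary induction.

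The induction on $n$ does no work: transitivity of $x'$ in $R^{n-1}_\pi$ is part of the hypothesis, not something the inductive step needs to supply. The paper's proof is direct. It sets $V=\{p\in\overline{F}:px\in U\}$, applies the Ellis trick to get $\widetilde V={\rm int}_\tau\cl_\tau(V\cap F)\neq\emptyset$, picks $\a\in F$ with $\a H(F)\subseteq\cl_\tau\widetilde V$, and then computes
\[
\overline{\t(\{x'\}\times U)}\ \supseteq\ u\c(\{x'\}\times U)\ \supseteq\ \{ux'\}\times u(u\c V)x\ \supseteq\ \{ux'\}\times \a H(F)x=\{ux'\}\times Fx,
\]
whence by RIC $\{ux'\}\times\pi^{-1}(y)\subset N$. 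Transitivity of $x'$ and the formula $\pi^{-1}(py)=p\c\pi^{-1}(y)$ then give all of $R^n_\pi$.

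The point you flag as the ``genuine obstacle'' --- producing enveloping-semigroup elements that \emph{fix} the first $n-1$ coordinates while sweeping the last --- is precisely what one should \emph{not} try to do. The transitive point $x'$ need not be $\F^{[d]}$- or even $\t$-minimal in $R^{n-1}_\pi$, so there is no reason a minimal idempotent should fix it, and hunting for such idempotents is a dead end. The paper simply lets the first block move to $ux'$; since $ux'\in\overline{\t x'}$ and $x'$ is transitive, nothing is lost, and the subsequent propagation over $R^{n-1}_\pi$ uses only that the $S_\t$-orbit of $x'$ is all of $R^{n-1}_\pi$. Dropping the requirement that $x'$ stay fixed dissolves your simultaneity problem, and with it the induction.
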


\begin{proof}
Note that we have $H(F)A=F$, where $F=\mathfrak{G} (Y, y),
A=\mathfrak{G}(X,x)$, since $\pi$ is weakly mixing.

\noindent {\em Claim:}
$$\{ux'\} \times \pi^{-1}(y)\subset \overline{\t(\{x'\}\times U )}.$$

\noindent {\em Proof of The Claim: } Set $V=\{p\in \ov{F}: px \in
U\}$. Then $V$ is a nonempty open set of $\ov{F}$ and by Ellis trick
we have $\widetilde{V}={\rm int}_\tau\cl_\tau (V\cap F)\neq
\emptyset$. By the definition of $H(F)$, there exists $\a\in F$ such
that $\a H(F)\subseteq\cl_\tau \widetilde{V}$.

Since $F=AH(F)=H(F)A$, we have
\begin{eqnarray*}
\ov{\t(\{x'\}\times U)} &\supseteq & u\c ( \{x'\}\times  U)\supseteq u\c ( \{x'\}\times Vx) \\
   &\supseteq &  \{u x'\} \times u(u\c V)x\supseteq  \{u x'\} \times u(u\c (V\cap
   F))x\\
   & =&  \{u x'\}\times  \cl_\tau (V\cap F)x\supseteq  \{u x'\}\times  \cl_\tau
   \widetilde{V}x\\
   &\supseteq & \{u x'\}\times \a H(F) x = \{u x'\}\times \a H(F) Ax\\
   & = & \{u x'\}\times \a F x =  \{u x'\}\times F x .
\end{eqnarray*}
Since $\pi$ is RIC, we have $u\c F x=\pi^{-1}(y)$. Hence
$$\ov{\t ( \{x'\}\times U)}\supseteq u\c ( \{ux'\}\times Fx)= \{ux'\}\times \pi^{-1}(y).$$
This ends the proof of the claim.

Now it is easy to see that $\overline{\t(\{x'\}\times U )}=R^n_\pi$.
Let $(x_1, x_2)\in R^n_\pi$, where $x_1\in R^{n-1}_\pi$. Since $x'$
is a transitive point of $R^{n-1}_\pi$, there exists a $p\in S_\t$
such that $px'=x_1$. Then $x_2 \in \pi^{-1}(py)=p\c \pi^{-1}(y)$.
Thus
$$(x_1, x_2)\in \{px'\}\times  p\c \pi^{-1}(y)\subseteq \overline{\t(
\{ux'\}\times \pi^{-1}(y))}\subseteq \overline{\t (\{x'\}\times U
)}.$$ Thus we have $R^n_\pi= \overline{\t (\{x'\}\times U )}$.
\end{proof}

\begin{thm}
Let $\pi : (X, \t)\rightarrow (Y, \t)$ be a RIC weakly mixing
extension of minimal metric systems and $y\in Y$. Then for all $n\ge
1$, there exists a transitive point $(x_1,x_2,\ldots,x_n)$ of
$R^n_\pi$ with $x_1, x_2, \ldots , x_n\in \pi^{-1}(y)$.
\end{thm}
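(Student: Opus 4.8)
The plan is to induct on $n$, using Lemma \ref{str-thm4-12} as the engine and a Baire-category argument to pass from its set-theoretic conclusion $\overline{\t(\{x'\}\times U)}=R^n_\pi$ to the existence of a \emph{single} transitive point lying in the prescribed fibre. First I would fix the base point. Since $(Y,\t)$ is minimal, $y$ is a minimal point, so by Proposition \ref{minimal-point} there is a minimal idempotent $u\in J(\M)$ with $uy=y$; choosing any $x_0\in\pi^{-1}(y)$ and replacing it by $x:=ux_0$ gives $x\in uX$ with $\pi(x)=y$, so that $\overline{u\pi^{-1}(y)}$ is available for Lemma \ref{str-thm4-12}. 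I note at the outset that $u\pi^{-1}(y)\subseteq\pi^{-1}(uy)=\pi^{-1}(y)$ and $\pi^{-1}(y)$ is closed, whence $\overline{u\pi^{-1}(y)}\subseteq\pi^{-1}(y)$; this inclusion is exactly what will force every coordinate of the point I build to land in $\pi^{-1}(y)$.

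For the base case $n=1$ we have $R^1_\pi=X$, which is minimal, so any point of $\pi^{-1}(y)$ is transitive. For the inductive step, suppose $x'=(x_1',\ldots,x_{n-1}')$ is a transitive point of $R^{n-1}_\pi$ with $\pi(x_j')=y$ for all $j$. Let $\{V_m\}_{m\in\N}$ be a countable base for the topology of $R^n_\pi$ (here I use that $X$, hence $R^n_\pi$, is metrizable), and put
$$W_m=\{z\in\overline{u\pi^{-1}(y)}:\ \overline{\t(x',z)}\cap V_m\neq\emptyset\}.$$
Because $\overline{u\pi^{-1}(y)}\subseteq\pi^{-1}(y)$, the map $z\mapsto(x',z)$ carries $\overline{u\pi^{-1}(y)}$ continuously into $R^n_\pi$, and $W_m$ is the preimage of the open set $\bigcup_{t\in\t}t^{-1}V_m$, so each $W_m$ is open. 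Each $W_m$ is also dense: given any nonempty open $U\subseteq\overline{u\pi^{-1}(y)}$, Lemma \ref{str-thm4-12} gives $\overline{\t(\{x'\}\times U)}=R^n_\pi\supseteq V_m$, so some $z\in U$ and $t\in\t$ satisfy $t(x',z)\in V_m$, i.e. $z\in W_m\cap U$.

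By the Baire category theorem applied to the compact metric space $\overline{u\pi^{-1}(y)}$, the set $\bigcap_{m}W_m$ is dense, in particular nonempty. Any $z$ in it has $\overline{\t(x',z)}$ meeting every basic open set $V_m$, so $\overline{\t(x',z)}$ is dense in $R^n_\pi$; being closed in $R^n_\pi$ (since $(x',z)\in R^n_\pi$ and $R^n_\pi$ is closed and invariant), it equals $R^n_\pi$. Thus $(x_1',\ldots,x_{n-1}',z)$ is a transitive point of $R^n_\pi$ with all coordinates in $\pi^{-1}(y)$, which closes the induction.

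I expect essentially no obstacle beyond Lemma \ref{str-thm4-12} itself, where the RIC and weak-mixing hypotheses are consumed (via the Ellis trick and the identity $H(F)A=F$). The only genuinely new point is the passage from ``the orbit closure of the entire slice $\{x'\}\times U$ is everything'' to ``a single slice point already works,'' and this is made routine precisely by the density of the $W_m$, which is where Lemma \ref{str-thm4-12} is invoked. The one hypothesis I would double-check is metrizability of $R^n_\pi$ (so that a countable base exists and Baire applies with a countable intersection); this holds throughout, as all the systems $X$ considered in the paper are compact metric.
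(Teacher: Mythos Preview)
Your proof is correct and follows essentially the same approach as the paper: induction on $n$ combined with a Baire category argument in $\overline{u\pi^{-1}(y)}$, with Lemma \ref{str-thm4-12} supplying the density of each open set. The only cosmetic difference is that the paper defines $V_\ep=\{x\in\overline{u\pi^{-1}(y)}:\t(x',x)\text{ is }\ep\text{-dense in }R^n_\pi\}$ and intersects over $\ep=1/n$, whereas you phrase it via a countable base $\{V_m\}$ of $R^n_\pi$; these are interchangeable formulations, and your explicit verification that $\overline{u\pi^{-1}(y)}\subseteq\pi^{-1}(y)$ is a detail the paper leaves implicit.
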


\begin{proof}
It is obvious for the case when $n=1$, since $R^1_\pi=X$. Now assume
it is true for $n-1$ ($n\ge 2$). Fix a transitive point
$x'=(x_1,x_2,\ldots,x_{n-1})\in R^{n-1}_\pi$ with
$x_1,x_2,\ldots,x_{n-1}\in \pi^{-1}(y)$. Assume that $y\in uY$ for
some minimal idempotent $u\in J(\M)$.

For each $\ep >0$, define
$$V_\ep=\{x\in \overline{u \pi^{-1}(y)}: \t (x',x)\ \text{is
$\ep$-dense in }\ R^n_\pi\}.$$ It is easy to verify that $V_\ep$ is
open. Now we show that $V_\ep$ is dense in
$\overline{u\pi^{-1}(y)}$. For any $\Lambda\subseteq X^n, z\in X^n,
\d >0$, $\Lambda \stackrel{\d} \sim z$ is defined by $\rho(z,
z')<\d, \forall z'\in \Lambda$.

Now let $\{z_1, z_2,\cdots, z_n\}$ be an $\ep$-net of $R^n_\pi$,
i.e. for each $z\in R_\pi^n$ there is some $z_j$ ($j\in
\{1,2,\ldots, n\}$) such that $\rho(z,z_j)<\ep$. Let $U$ be an open
subset of $\overline{w\pi^{-1}(y)}$. By Lemma \ref{str-thm4-12},
$\overline{\t ( \{x'\}\times U)}=R^n_\pi$. So there are some open
subset $U_1\supseteq U$ and $t_1\in \t$ such that $t_1( \{x'\}\times
U_1)\stackrel{\ep}\sim z_1$. Again, by Lemma \ref{str-thm4-12},
$\overline{\t ( \{x'\}\times U_1)}=R^n_\pi$. So there are an open
subset $U_2\supseteq U_1$ and $t_2\in \t$ such that $t_2(
\{x'\}\times U_2)\stackrel{\ep}\sim z_2$. $\ldots$ Inductively, we
have a sequence $U_1\supseteq U_2\supseteq \cdots \supseteq U_n$
(relatively open) and $t_1, \ldots, t_n\in \t$ such that $t_j(
\{x'\}\times U_n)\stackrel{\ep}\sim z_j, \forall j\in \{1,2, \ldots,
n\}$. Hence $U_n\subseteq V_\ep$. This means that $V_\ep$ is dense
in $\overline{u \pi^{-1}(y)}$.

Let $\Gamma=\bigcap_{n=1}^\infty V_{1/n}$. Then $\Gamma$ is a
residual set of $\overline{u \pi^{-1}(y)}$, and for all $x\in
\Gamma$, we have $\overline{\t (x',x)}=R^n_\pi$. In particular,
there exists a transitive point $(x_1,x_2,\ldots,x_n)$ of $R^n_\pi$
with $x_1, x_2, \ldots , x_n\in \pi^{-1}(y)$. The proof is
completed.
\end{proof}


\end{document}